\newcommand{\mR}{\ensuremath{{\mathbb R}}}
\newcommand{\mZ}{\ensuremath{{\mathbb Z}}}
\newcommand{\mN}{\ensuremath{{\mathbb N}}}
\newcommand{\mE}{\ensuremath{{\mathbb E}}}
\newcommand{\mP}{\ensuremath{{\mathbb P}}}
\newcommand{\ve}{\ensuremath{\varepsilon}}
\newcommand{\eqref}[1]{(\ref{#1})}
\newcommand{\Of}{\ensuremath{O}}
\newcommand{\T}{\ensuremath{\mathcal T}}
\newcommand{\hOf}{\ensuremath{\hat{\Of}}}
\renewcommand{\O}{\ensuremath{\mathcal{O}}}
\newcommand{\tO}{\ensuremath{\tilde O}}
\newcommand{\bOf}{\ensuremath{\check{\Of}}}
\newcommand{\Kp}{\ensuremath{G}}
\newcommand{\hKp}{\ensuremath{\hat{\Kp}}}
\newcommand{\Wf}{\ensuremath{\Xi_+}}
\newcommand{\hWf}{\ensuremath{\tilde{\Wf}}}
\newcommand{\hWff}{\ensuremath{\tilde{\Wf}}}
\newcommand{\Wp}{\ensuremath{\Xi_p^-}}
\newcommand{\hWp}{\ensuremath{\tilde{\Wp}}}
\newcommand{\la}{\ensuremath{\langle}}
\newcommand{\ra}{\ensuremath{\rangle}}
\newcommand{\hD}{\ensuremath{\hat \Delta}}
\newcommand{\D}{\ensuremath{\Delta}}
\newcommand{\hO}{\ensuremath{\hat \Omega_{\hat u,\Delta z}\hat \Omega_{\Delta z,\Delta z}^{-1}}}
\newcommand{\YY}{\ensuremath{\Sigma_{y2,y2}}}
\newcommand{\hYY}{\ensuremath{\hat \Sigma_{y2,y2}}}
\newcommand{\hYZ}{\ensuremath{\hat \Sigma_{y,z}}}
\newcommand{\lT}{\ensuremath{\log{T}}}
\newcommand{\llT}{\ensuremath{\log \log{T}}}
\newtheorem{theorem}{Theorem}[section]
\newtheorem{lemma}{Lemma}[section]
\newenvironment{assume}[1]{{\bf Assumption #1:}}{$\Box$}
\newenvironment{proof}{{\sc Proof:}}{$\Box$\\}
\begin{document}
\setlength{\baselineskip}{0.7cm}

\title{Asymptotic distribution of estimators in reduced rank regression settings 
when the regressors are integrated}
\author{Dietmar Bauer \thanks{Correspondence to: tel.: ++43 +50 550 6669, e-mail: Dietmar.Bauer@ait.ac.at.} \\
{\small Austrian Institute of Technology, Dynamic Transportation Systems} \\
{\small  Giefingg. 2, A-1210 Wien  }}

\maketitle
\pagestyle{empty}

\begin{abstract}
 In this paper the asymptotic distribution of estimators 
 is derived in a general regression setting where rank restrictions on a submatrix of the coefficient matrix 
are imposed and the regressors can include stationary or I(1) processes. 
 Such a setting occurs e.g. in factor models. 
 Rates of convergence are derived and the asymptotic distribution is given for least squares estimators 
 as well as fully-modified estimators. The gains in imposing the rank restrictions are investigated.
 A number of  special cases are discussed including the Johansen results in the case 
 of cointegrated VAR(p) processes.
 \end{abstract}

 Keywords: rank restricted regression, asymptotic distribution, integration

\section{Introduction}
In this paper a multivariable time series 
$(y_t)_{t \in \mZ}, y_t \in \mR^s,$ is modeled as a linear 
function of two processes $(z_t^r)_{t \in \mZ}, z_t^r \in \mR^{m_r}$ and
$(z_t^u)_{t \in \mZ}, z_t^u \in \mR^{m_u}$ (where 'r' stands for restricted
and 'u' for unrestricted) using the following model:
\begin{equation} \label{equ:RRR}
y_t = b_r z_t^r + b_u z_t^u + u_t, t=1,\dots,T
\end{equation}
where $b_r= O\Gamma'$ is of rank $n< \min(s, m_r)$. 
Such a situation can occur e.g. for panel data sets where both $s$ and $m_r$ are large.
Throughout all variables will be assumed to be either stationary or (co-)integrated. 
Details on the assumptions for the processes are given below. 
For the moment assume that $(u_t)_{t \in \mZ}$ is an independent identically distributed (iid) process.

In this situation the asymptotics for the OLS estimators \cite{ParkPhillips,ParkPhillips89} and fully modified \cite{Phillips95} estimators neglecting the rank restriction are well documented in the literature.   
However, neglecting the rank restriction in the case that $m_r$ and $s$ are large, the number of parameters to be estimated equals $(m_r+m_u)s$ which might require excessively large
samples in order to allow for reasonable accuracy. As an alternative then rank restricted regression (RRR) can be used in order to reduce the number of parameters greatly. 

The RRR framework of equation~\eqref{equ:RRR} is also of importance for the estimation involved in subspace methods,  \citeaffixed{L83,BauWag2001}{see e.g.}. 
In these methods a RRR of the type~\eqref{equ:RRR} is the central step in the estimation. Thus the understanding of the asymptotic properties of the corresponding estimators needs a thorough understanding 
of the asymptotic properties of estimators for \eqref{equ:RRR}.

If all involved processes are stationary the asymptotic theory of RRR estimators based on OLS is presented in \cite{Reinsel}. 
There consistency and asymptotic normality of the estimated 
coefficient matrices is stated for the (generic) special case that all singular values of $b_r$ are distinct. Further expressions for the asymptotic variance matrix are provided 
using implicitly defined quantities which, hence, are not easy to interpret or implement.  
 
For a cointegrated process $X_t$ letting 
$$
y_t = \Delta X_t = X_t-X_{t-1}, \quad z_t^r = X_{t-1}, \quad z_t^u = [\Delta X_{t-1}',\dots,\Delta X_{t-p}']'
$$
equation~\eqref{equ:RRR} corresponds to the Johansen framework \cite{Johansen}. 
Also in this case the asymptotics of quasi maximum likelihood estimators are well known. 
Although the original material focuses on the estimation of the cointegrating relations $\Gamma$ 
extracted as the right factor in the product $b_r = O \Gamma'$, the asymptotics for the full estimator $\hat b_r$ can be derived based on these results, see the evaluations in \cite{Johansen}.
The arguments given there rely on stationarity of $y_t$ and $\Gamma' X_{t-1}$
as well as on the fact that the rank restriction only restricts the coefficients corresponding to the nonstationary components of $X_{t-1}$ as will be demonstrated below.  

Equation~\eqref{equ:RRR} extends this framework by allowing for more general processes $z_t^r$ and $z_t^u$. 
%Contrary to the Johansen framework it will be shown below that in the situation of this paper 
It will be shown below (see Theorem~\ref{thm:repr}) that there exist nonsingular transformations $\T_y, \T_r$ such that 
$$
\tilde b_r := \T_y b_r {\mathcal T}_r^{-1} = \left[ \begin{array}{ccc} I_{c_y} & 0_{c_y \times (c_r-c_y)} & 0_{c_y\times (m_r-c_r)} \\ 0_{(s-c_y) \times c_y} & 0_{(s-c_y) \times (c_r-c_y)} & \tilde b_{23,r} \end{array} \right]  
= \underbrace{\left[ \begin{array}{cc} I_{c_y} & 0  \\ 0 & \tilde O_2 \end{array} \right]}_{\tilde O} 
\underbrace{\left[ \begin{array}{ccc} I_{c_y} & 0 & 0 \\ 0 & 0 & \tilde \Gamma_{32}' \end{array} \right]}_{\tilde \Gamma'}
%& 0_{c_y \times (c_r-c_y)} & 0_{c_y\times (m_r-c_r)} \\ 0_{(s-c_y) \times c_y} & 0_{(s-c_y) \times (c_r-c_y)} & \tilde b_{23,r} \end{array} \right] 
$$
and in $\tilde z_t^r = \T_r z_t^r$ the first $c_r$ coordinates are integrated, the remaining ones being stationary. In the Johansen framework $c_y=0$ holds while in this paper $0 \le c_y \le n \le s$ is allowed for. 
Also in $\T_y (y_t - b_uz_t^u)$ the first $c_y$ components are integrated the remaining ones being stationary. 

In this extended situation the asymptotics of \cite{Johansen} do not apply as can be seen from the following arguments:
Using the notation $\la a_t , b_t \ra = T^{-1}\sum_{t=1}^T a_t b_t'$ for processes $(a_t)_{t \in \mZ}, (b_t)_{t \in\mZ}$ the consistency proof in Lemma 13.1. of \cite{Johansen} 
relies on solving the generalized eigenvalue problem 
$$
\lambda \la \tilde z_{t}^{r,\pi}, \tilde z_{t}^{r,\pi} \ra v - \la \tilde z_{t}^{r,\pi}, y_t^\pi \ra \la y_t^\pi , y_t^\pi\ra^{-1} \la y_t^\pi , 
\tilde z_{t}^{r,\pi}  \ra v = 0.
$$
Here $a_t^\pi$ denotes the residuals of a regression onto $z_t^u$. Consistency is shown by transforming the problem using the matrix $A_T = [\tilde \Gamma_\perp T^{-1/2},\tilde \Gamma]$ 
(changing the order of the block column to correspond to our ordering as used below) where the columns of 
the matrix $\tilde \Gamma_\perp, (\tilde \Gamma_\perp)' \tilde \Gamma_\perp = I$ span the orthogonal complement of the space spanned by the columns of $\tilde \Gamma$. 
Correspondingly for $c_y=0$ in $A_T' z_{t}^{r,\pi}$ the first components are 
nonstationary but scaled by $T^{-1/2}$ and the remaining 
ones stationary. In the transformed problem 
$$
\lambda A_T'\la \tilde z_{t}^{r,\pi}, \tilde z_{t}^{r,\pi} \ra A_T w- A_T'\la \tilde z_{t}^{r,\pi}, y_t^\pi \ra \la y_t^\pi , y_t^\pi\ra^{-1} \la y_t^\pi , \tilde z_{t}^{r,\pi} \ra A_T w= 0 
$$
all matrices converge to block diagonal matrices. Thus the corresponding eigenvalues and matrix of eigenvectors $V_T$ (with a suitable choice of the basis) converge. 
The eigenvectors of the transformed problem corresponding to the nonzero eigenvalues 
are related via $V_T = A_T^{-1}W_T
= [\tilde \Gamma,\tilde \Gamma_\perp T^{1/2}]' W_T$ (assuming without restriction of generality $\tilde \Gamma' \tilde \Gamma = I_n$) implying that $T^{1/2} \tilde \Gamma_\perp' W_T$ converges to zero in probability. No almost sure (a.s.) results and no sharper bounds on the order of convergence are provided in \cite{Johansen}.
\\
For $c_y>0$, however,  $\tilde \Gamma' \tilde z_{t}^r$ is nonstationary and 
$A_T'\la \tilde z_{t}^{r,\pi}, \tilde z_{t}^{r,\pi} \ra A_T$ does not converge. 
Using instead $\tilde A_T$ as 
$$
\tilde A_T' = \left[ \begin{array}{ccc} T^{-1/2}I_{c_y} & 0 & 0 \\ 
0 & T^{-1/2}I_{c_r-c_y} & 0 \\
 0 & 0 & \tilde \Gamma_{32}' \\ 
0 & 0 & \tilde \Gamma_{32,\perp}' \end{array} \right]
$$
where $\tilde \Gamma_{32,\perp}'\tilde \Gamma_{32}=0, \tilde \Gamma_{32,\perp}'\tilde \Gamma_{32,\perp}=I$  
%\tilde \beta_1 T^{-1/2},\tilde \beta_2,\tilde \beta_{\perp,1} T^{-1/2},\tilde \beta_{\perp,2}]$ where 
%$\tilde \beta_1'$ and $\tilde \beta_2'$ denote the first and second block row of $\tilde \beta'$ respectively
%This 
leads to convergence for the generalized eigenvalue problem. Thus again $V_T = \tilde A_T^{-1}W_T$ converges, where the first $c_y$ columns corresponding to the eigenvalue $\lambda=1$
converge to $[I_{c_y},0]'$.  
Consequently also $W_T = \tilde A_TV_T$ converges. 
However, the heading $c_y \times c_y$ subblock of this matrix equals $T^{-1/2}I_{c_y}$ and hence converges to zero as does the whole block column.  
Multiplying the corresponding block column with $T^{1/2}$ the heading subblock equals the identity matrix as required, but the orders of convergence for the remaining blocks are reduced by this order and 
hence the remaining arguments in the proof of Lemma 13.1 of \cite{Johansen} do no longer apply. 
Therefore this approach cannot be used in order to show consistency for the estimator of $\tilde \Gamma$ and thus also not of $\tilde O$. 
Due to this complication \cite{BauWag2001} were led to provide an adapted estimator by setting the remaining block rows of the first
block column of $V_T$
equal to zero. 
In this paper a different route in the proof for consistency of the estimator for $b_r$ is provided showing that the adaptation is not needed. 
%Consistency still holds, but a different route for the proof needs to be taken.

In addition to the changes in the consistency proofs also the derivation of 
%the asymptotic distribution of the estimators as given in \cite{Johansen}
%need to be changed as can be seen from the following arguments: 
%Next, 
%given other techniques lead to consistency of the estimators $\hat O$ and $\hat \Gamma$ of $O$ and $\Gamma$ respectively, 
the asymptotic distribution of the estimators $\check O$ and $\check \Gamma$ of $\tilde O$ and $\tilde \Gamma$ as provided in Lemma 13.2. of \cite{Johansen} for the case $c_y=0$
cannot be used in the case $c_y>0$ as can be seen from these arguments:  
In the last equation on p. 182 the last term $(\check O - \tilde O)\tilde \Gamma' \la \tilde z_t^{r,\pi} , \tilde z_t^{r,\pi} \ra$ is shown to tend to zero
using consistency for $\check O$ and stationarity of $\tilde \Gamma' \tilde z_t^{r,\pi}$ for $c_y=0$. For $c_y>0$, however, $\tilde \Gamma' \tilde z_t^{r,\pi}$ contains nonstationary 
components such that $\la \tilde \Gamma' \tilde z_t^{r,\pi},  \tilde z_t^{r,\pi} \ra = O_P(T)$ and moreover converges in distribution to a nondegenerate distribution when divided by $T$. 
Hence even if $(\check O - \tilde O)$ is estimated superconsistently such that $T(\check O - \tilde O)$ converges in distribution, the term 
$(\check O - \tilde O)\tilde \Gamma' \la \tilde z_t^{r,\pi} , \tilde z_t^{r,\pi} \ra$ in the last equation on p. 182 does not vanish. 
Thus also for the asymptotic distribution the proof in \cite{Johansen} does not apply for the case $c_y>0$ and a more detailed analysis is needed.
It is the main goal of the paper to close this gap in the literature.

%Transforming $X_{t-1}$ using an orthogonal matrix ${\mathcal T}_r := [\beta_\perp,\beta]'$ in ${\mathcal T}_r X_{t-1}$ the first $s-n$ coordinates are integrated and the remaining ones stationary. 
%The correspondingly transformed $b_r\T_r^{-1} = [0,\alpha]$. 
%The integrated coordinates thus are multiplied with the null matrix and hence the corresponding generalized eigenvalues of $b_r$ equal zero with the corresponding eigenspace corresponding to all 
%nonstationary directions. 
%In the model the matrix multiplying the integrated coordinates is restricted to zero and the matrix multiplying the stationary components is not restricted at all.  

%Therefore the rank restriction potentially acts on the matrix multiplying the integrated as well as on the stationary components. Furthermore the integrated components are multiplied with a
%matrix potentially containing two distinct eigenvalues, zero and one. 
%This setting cannot be handled in a straightforward fashion using the theory of \cite{Johansen} as it requires more general linearizations of the eigendecomposition than the ones
%used there. 

In this paper two different estimators 
are considered: RRR estimator based on the unrestricted OLS estimator as well as based on   
the fully modified unrestricted estimator of \cite{Phillips95}.
The main contributions of the paper are: 

\begin{itemize}
	\item A full discussion of the asymptotic properties of the RRR estimators including 
conditions for 
consistency, derivation of  the asymptotic distribution of the estimators under the condition of known rank $n$ is provided. 
	\item For the RRR estimator based on OLS almost sure (a.s.) rates of convergence are provided, improving the results in the 
literature which provide only in probability convergence. 
	\item Furthermore in all cases the asymptotic distribution will be given explicitly and a detailed comparison of the relative advantages in
a number of  special cases is 
provided.  
\end{itemize}

The organization of this paper is the following: 
The next section presents the various estimation algorithms while their corresponding asymptotic properties are discussed 
in section~\ref{sec:results}. 
Section~\ref{sec:specialcases} 
illustrates the results using a number of special cases. Finally section~\ref{sec:concl} summarizes the paper. 
All results are proved in Appendix~\ref{app:A}. 
A summary of the notation is contained in Appendix~\ref{sec:notation}.

\section{Estimation Algorithms} \label{sec:algo}
In this paper four different estimators for the coefficient matrices $b_r, b_u$ in equation~\eqref{equ:RRR} based on 
observations for time instants $t=1,\dots,T$ are considered. 
Throughout as above the notation $\la a_t, b_t \ra := T^{-1}\sum_{t=1}^T a_t b_t'$ will be used (somewhat sloppily using $a_t, b_t$ for 
the processes $(a_t)_{t \in \mZ}$ and $(b_t)_{t \in \mZ}$ and for the variables $a_t, b_t$ for given time instant $t$ respectively). 
\\
Using this notation the ordinary least squares (OLS) estimator (that ignores the knowledge on the rank constraint $\mbox{rank}(b_r)=n$) 
can be written as
$$
\hat \beta_{OLS} = \la y_t , z_t \ra \la z_t, z_t \ra^{-1}, \quad \hat \beta_{OLS} = [\hat \beta_{OLS,r},\hat \beta_{OLS,u}].  
$$

If 
$$
\hWf  (\hWf)':= \la y_t^\pi,  y_t^\pi \ra^{-1}, \quad 
y_t^\pi = y_t - \la y_t, z_{t}^{u} \ra \la z_{t}^{u},z_{t}^{u} \ra^{-1} z_{t}^{u},
$$
the rank restricted estimator maximizing the quasi maximum likelihood based on the assumption of iid Gaussian residuals can be defined as 
$$
\hat \beta_{RRR} = \mbox{arg}\min_{\beta=[\beta_r,\beta_u]  \in \mR^{s \times (m_r+m_u)}, \mbox{rank}(\beta_r)=n} 
\mbox{tr}\left[ \hWf \sum_{t=1}^T (y_t - \beta_r z_t^r - \beta_u z_t^u) (y_t - \beta_r z_t^r - \beta_u z_t^u)' \hWf' \right]
$$
and is given by

\begin{equation} \label{def:RRRest}
\hOf = (\hWf)^{-1} \hat U_n \hat S_n, \hKp' = \hat V_n' (\hWp)^{-1}, \hat \beta_{RRR,r} = \hOf\hKp', 
\hat \beta_{RRR,u} = \la y_t - \hat \beta_{RRR,r} z_t^r , z_t^u \ra \la z_t^u, z_t^u\ra^{-1}
\end{equation}

using the SVD
$$
\hWf \hat \beta_{OLS,r} \hWp = \hat U_n \hat S_n \hat V_n' + \hat R_n
$$

where $\hat U_n$ denotes the matrix having as columns the singular vectors corresponding to the dominant
singular values $\hat \sigma_1\ge \hat \sigma_2 \ge \dots \ge \hat \sigma_n > 0$ contained as the diagonal in the diagonal matrix
$\hat S_n$. The corresponding right singular vectors are contained in $\hat V_n$. Finally $\hat R_n$ constitutes the 
approximation error. 
Here $\hWp = \la z_t^{\pi}, z_t^{\pi} \ra^{1/2}$ 
(where $X^{1/2}$ denotes the symmetric matrix square root of the square matrix $X$ and $z_t^{\pi}$ denote 
residuals from 
regression of $z_{t}^r$ onto $z_{t}^u$). 
Clearly the estimator $\hat \beta_{RRR,r}$ does not depend on the decomposition of $\hOf\hKp'$ into $\hOf$ and $\hKp'$.

Note that for this choice of $\hWf$ and $\hWp$ the columns of $\hKp'$ can also be interpreted as the eigenvectors to the generalized eigenvalue problem 
$$
\la z_{t}^{\pi} , z_{t}^{\pi} \ra \hKp \hat S_n^2 = \la  z_{t}^{\pi} ,  y_{t}^{\pi} \ra \la y_{t}^{\pi} ,  y_{t}^{\pi} \ra^{-1} \la  y_{t}^{\pi} , 
 z_{t}^{\pi} \ra \hKp .
$$

As can be verified straightforwardly the corresponding estimate $\hOf$ equals the coefficients for regressing $y_t^{\pi}$ onto $\hKp'  z_{t}^{\pi}$. 
Thus in the Johansen framework the Johansen estimators are obtained. 

\cite{Phillips95} discusses the fully-modified (FM) estimators as an alternative to least squares estimation. 
The fully modified OLS estimator (FM-OLS) of $\beta$
is defined as 
\begin{equation} \label{equ:defFMOLS}
\hat \beta_{OLS}^+ 
:=  \left(\la y_t, z_t\ra -  \hat \Delta_{\hat u,\Delta z} - \hat \Omega_{\tilde u,\Delta z} \hat \Omega_{\Delta z,\Delta z}^{-1} ( \la \Delta z_t, z_t \ra 
- \hat \Delta_{\Delta z,\Delta z} )
\right) \la z_t , z_t \ra^{-1}
\end{equation}
where  
%Further 
for processes $(a_t)_{t \in \mZ}$ and $(b_t)_{t \in \mZ}$ the estimates
$$
\hat \Omega_{a,b} := \sum_{j=1-T}^{T-1} w(j/K)\hat \Gamma_{a,b}(j), \quad \hat \Delta_{a,b} := \sum_{j=0}^{T-1} w(j/K)
\hat \Gamma_{a,b} (j)
$$

are used. As usual $\Delta z_t := z_t - z_{t-1}$. Here $\hat \Gamma_{a,b} (j):=\la a_t, b_{t-j}\ra = T^{-1}\sum_{t=1}^{T} a_t b_{t-j}'$ 
denotes the estimated covariance sequence where 
observations outside of the observed sample are treated as zeros.
Further $\hat \Omega_{\tilde u,\Delta z}$ is estimated using the residuals 
$\hat u_t = y_t - \hat \beta_{OLS} z_t$. 
Throughout we will use the subscripts to indicate the processes involved. Additionally superscripts 
indicate components of the processes. 
A slight difference to the notation of e.g. \citeasnoun{Phillips95} is that for the integrated processes $z_t$, say,
we index by $\Delta z$ rather than only $z$.

Consequently for stationary processes $(a_t)_{t \in \mZ}$ and $(b_t)_{t \in \mZ}$ it follows that 
$\hat \Omega_{a,b}$ and $\hat \Delta_{a,b}$ are estimators of the long-run-covariance and the one-sided long run covariance matrices defined as
$$
\Omega_{a,b} = \sum_{j=-\infty}^{\infty} \mE a_j b_{0}', \quad \Delta_{a,b} = \sum_{j=0}^\infty \mE a_j b_{0}'.
$$

For the kernel function $w(\cdot)$ occurring in this definition 
we will use the standard assumptions 
\citeaffixed{Phillips95}{cf.}:
\\
\begin{assume}{K}
The kernel function $w(.): \mR \to [-1,1]$ is a twice continuously 
differentiable even function with
\begin{itemize}
	\item[(a)] $w(0)=1, w'(0)=0, w''(0)\ne 0$ 
	\item[(b)] $w(x)=0, |x| \ge 1$ with $\lim_{|x| \to 1} w(x)/(1-|x|)^2 =$ constant 
\end{itemize}
Further the bandwidth parameter $K$ in the kernel estimates
is chosen proportional to $c_TT^b$ for  some $b \in (1/4,2/3)$
where $c_T$ is slowly varying at infinity (i.e. $c_{Tx}/c_T \to 1, \forall x>0$).
\end{assume}

Analogously to the RRR estimator derived from the OLS estimator 
we derive  
the new fully modified RRR estimator (henceforth denoted as
{\tt FM-RRR}) from the FM-estimator using the SVD 
\begin{eqnarray}
& & \hWff \hat \beta_{OLS,r}^+
\la z_t , z_t \ra^{1/2}   =  
\hat U_n^+ \hat S_n^+ (\hat V_n^+)' + \hat R_n^+, \nonumber \\
\hWff & = & \left( \la y_t^\pi , y_t^\pi \ra - \hat \Omega_{\tilde u,\Delta z} \hat \Omega_{\Delta z,\Delta z}^{-1}
(\la \Delta z_t, y_t^\pi \ra - \hat \Delta_{\Delta z,\Delta y^\pi})-
(\la y_t^\pi , \Delta z_t \ra - \hat \Delta_{\Delta y^\pi,\Delta z})\hat \Omega_{\Delta z,\Delta z}^{-1}\hat \Omega_{\Delta z,\hat u} 
\right)^{-1/2}. \label{eq:defWff}
\end{eqnarray}
where as before $\hat U_n^+$ denotes the matrix
of left singular vectors, $\hat S_n^+=\mbox{diag}(\hat s_1^+,\hat s_2^+,\dots,\hat s_n^+)$ 
is the diagonal matrix containing the
dominant estimated singular values $\hat s_1^+ \ge \hat s_2^+ \ge \dots \ge \hat s_n^+ > 0$ 
decreasing in size and the columns of 
$\hat V_n^+$ contain the corresponding right singular vectors.

The estimator under the rank restriction $\mbox{rank}(\beta)=n$ 
then is defined as
\begin{equation} \label{equ:defFMRRR}
\hat \beta_{RRR,r}^+ = 
(\hWff)^{-1} \hat U_n^+ \hat S_n^+ (\hat V_n^+)'\la z_t, z_t \ra^{-1/2}, 
\hat \beta_{RRR,u}^+ = 
\hat \beta_{OLS,u}^+ - (\hat \beta_{OLS,r}^+ - \hat \beta_{RRR,r}^+) \la z_t^r, z_t^u \ra \la z_t^u, z_t^u \ra^{-1}.
\end{equation}

\section{Results} \label{sec:results}
In this paper the following assumptions on the data generating process (dgp) will be used:
\\
\begin{assume}{P}  \label{ass:procrrr}
The process $(y_t)_{t \in \mZ}$ is generated according to~\eqref{equ:RRR} with $u_t = \Lambda \ve_t$ ($\Lambda  \in \mR^{s \times k}$ of 
full row rank) where 
$(z_t^r)_{t \in \mZ}$ and $(z_t^u)_{t \in \mZ}$ are processes such that for some 
orthogonal matrices $H_r = [H_{r,\parallel},H_{r,\bot}], H_u = [H_{u,\parallel},H_{u,\bot}]$ 
($H_{r,\parallel} \in \mR^{m_r \times c_r}, H_{u,\parallel} \in \mR^{m_u \times c_u}$) we have
$$
\mbox{diag}(\Delta(L)I_{c_r},I_{m_r-c_r})H_{r}' z_{t}^r = v_t, \quad t \in \mZ, \quad
\mbox{diag}(\Delta(L)I_{c_u},I_{m_u-c_u})H_{u}' z_t^u = w_t, \quad t \in \mZ,
$$
where $\Delta (L) = 1-L$ denotes the difference operator ($L$ denoting the backward shift operator) 
and the joint vector $\nu_t := [v_t',w_t']'$
is a stationary process generated according to
$$
\nu_t  = \sum_{j=1}^\infty C_j \ve_{t-j} 
$$
where $\sum_{j=1}^\infty j^a \| C_j \| < \infty$ for some $a>3/2$ and where 
for the transfer function $c(z):= [c_v(z)',c_w(z)']'=\sum_{j=1}^\infty C_jz^j$ (with
$z$ denoting a complex variable) the matrix $c(1)$ is of full row rank. 
Additionally it is assumed that 
$$
\mE 
\left[ \begin{array}{c} H_{r,\bot}'z_t^r \\ H_{u,\bot}' z_t^u \end{array} \right] 
\left[ \begin{array}{c} H_{r,\bot}'z_t^r \\ H_{u,\bot}' z_t^u \end{array} \right]' >0.
$$
Here $(\ve_t)_{t \in \mZ}$ is an iid process with zero mean, nonsingular variance  $\Sigma$ and finite 
fourth moments. Finally $H_{r,\parallel}'z_0^r=0$ and $H_{u,\parallel}'z_0^u=0$. 
\end{assume}

Note that summation for $\nu_t$ starts at $j=1$. Thus uncorrelatedness of the regressors with the noise is built into the 
assumptions. 
The assumptions imply that $z_t^r$ and $z_t^u$ are I(1) processes such that the cointegrating rank of the joint process
equals the sum of the cointegrating ranks of the two processes. 

The assumption of zero initial conditions is not important and can be replaced with the assumption of deterministic initial conditions, i.e. assuming that modeling is performed 
conditional on initial conditions. 

The noise is assumed to constitute an iid sequence which is somewhat restrictive. 
Weaker assumptions are possible but make the asymptotic distributions more involved. 
Further note that the same noise $\ve_t$ is used to generate the regressors as well as the residuals in the estimation equation.
Consequently lagged $y_t$'s are admitted as regressors 
and some dynamics may be included in the model, alleviating the iid assumption. 

Furthermore these assumptions exclude deterministic terms such as the constant as regressors which are discussed separately below.

The assumptions on the data generating process lead to the following representation result:
\begin{theorem} \label{thm:repr}
Let Assumption P hold where $n=\mbox{rank}(b_r),
b = [b_r,b_u]$.  
\\
(I) Let 
$c_y\le n$ denote the
rank of $b_r H_{r,\parallel}$. Then the cointegrating rank of
$(z_t^r)_{t \in \mZ}$ is $m_r - c_r$ and the
cointegrating rank of $(y_t - b_u z_t^u)_{t \in \mZ}$ is
$s - c_y$. \\
(II)
There exist nonsingular
matrices ${\mathcal T}_y \in \mR^{s \times s}, {\mathcal T}_{z,r} \in \mR^{m_r \times m_r}$ and
${\mathcal T}_{z,u} \in \mR^{m_u \times m_u}$
such that
$$
\begin{array}{l}
\tilde y_t = \left[ \begin{array}{c} \tilde y_{t,1} \\ \tilde y_{t,2} \end{array}\right]
= {\mathcal T}_y (y_t - b_u z_t^u) =
\tilde b_r \tilde z_t + %\tilde b_u \tilde z_{t,2}^u+
\tilde \ve_t =
\left[ \begin{array}{ccc} I_{c_y} & 0 & 0 \\ 0 & 0 & \tilde b_{2,3} \end{array} \right]
\left[\begin{array}{c} \tilde z_{t,1} \\ \tilde z_{t,2} \\ \tilde z_{t,3} \end{array}\right]
+ \left[\begin{array}{c} \tilde \ve_{t,1} \\ \tilde \ve_{t,2} \end{array}\right], \\
\tilde z_t = {\mathcal T}_{z,r} z_{t}^r = \left[ \begin{array}{c} \tilde z_{t,1} \\
\tilde z_{t,2} \\ \tilde z_{t,3} \end{array} \right],
\tilde z_t^u = {\mathcal T}_{z,u} z_t^u =
\left[\begin{array}{c} \tilde z_{t,1}^u \\ \tilde z_{t,2}^u \end{array} \right]
\end{array}
$$
where $\Delta(L) \tilde z_{t,1} = \tilde c_{z,1}(L)\ve_t, \Delta (L)\tilde z_{t,2} =
\tilde c_{z,2}(L)\ve_t,
\Delta(L) \tilde z_{t,1}^u = \tilde c_{z,u}(L)\ve_t, t \in \mZ$ ($L$ denoting the backward shift operator)
and the matrix $[\tilde c_{z,1}(1)',\tilde c_{z,2}(1)',\tilde c_{z,u}(1)']$
is of full column rank,
and $(\tilde z_{t,3})_{t \in \mZ}$
and $(\tilde z_{t,2}^u)_{t \in \mZ}$ are
stationary processes with nonsingular spectrum at $z=1$. 
\end{theorem}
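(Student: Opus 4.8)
The plan is to construct the three coordinate changes explicitly from Assumption P and then read off both the integration orders and the block pattern of $\tilde b_r$. For part (I) I would work from the orthogonal decomposition $z_t^r = H_{r,\parallel}H_{r,\parallel}'z_t^r + H_{r,\bot}H_{r,\bot}'z_t^r$. Assumption P states that the difference of $H_{r,\parallel}'z_t^r$ equals the leading $c_r$ coordinates of the stationary vector $v_t$, so $H_{r,\parallel}'z_t^r$ is I(1), whereas $H_{r,\bot}'z_t^r$ is stationary; hence $z_t^r$ has $c_r$ common trends and cointegrating rank $m_r-c_r$. Substituting into $y_t-b_uz_t^u = b_rz_t^r+u_t$ gives $y_t-b_uz_t^u = (b_rH_{r,\parallel})(H_{r,\parallel}'z_t^r) + (b_rH_{r,\bot})(H_{r,\bot}'z_t^r) + u_t$, where the last two summands are stationary. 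The common trends of $y_t-b_uz_t^u$ are therefore spanned by the columns of $b_rH_{r,\parallel}$, a space of dimension $c_y=\mbox{rank}(b_rH_{r,\parallel})$, so its cointegrating rank equals $s-c_y$.

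For part (II) I would first rotate the integrated directions of $z_t^r$: choose a nonsingular $Q_r\in\mR^{c_r\times c_r}$ whose last $c_r-c_y$ columns span $\ker(b_rH_{r,\parallel})$ and whose first $c_y$ columns complete a basis, so that $b_rH_{r,\parallel}Q_r=[M_1,\,0]$ with $M_1\in\mR^{s\times c_y}$ of full column rank. Let ${\mathcal T}_{z,r}$ be the nonsingular matrix with top block rows $Q_r^{-1}H_{r,\parallel}'$ and bottom block rows $H_{r,\bot}'$ (its inverse is $[H_{r,\parallel}Q_r,\,H_{r,\bot}]$); this defines the I(1) blocks $\tilde z_{t,1},\tilde z_{t,2}$ and the stationary block $\tilde z_{t,3}=H_{r,\bot}'z_t^r$. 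Take ${\mathcal T}_{z,u}=H_u'$, splitting $z_t^u$ into its I(1) part $\tilde z_{t,1}^u=H_{u,\parallel}'z_t^u$ and its stationary part $\tilde z_{t,2}^u=H_{u,\bot}'z_t^u$. Finally, with $M_1^+=(M_1'M_1)^{-1}M_1'$ and a full-rank $M_{1,\bot}$ satisfying $M_{1,\bot}'M_1=0$, let ${\mathcal T}_y$ be the nonsingular matrix with top block rows $M_1^+$ and bottom block rows $M_{1,\bot}'$.

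With these choices I would compute ${\mathcal T}_yb_r{\mathcal T}_{z,r}^{-1}={\mathcal T}_y[M_1,\,0,\,b_rH_{r,\bot}]$: the middle column block vanishes by the construction of $Q_r$, the identities $M_1^+M_1=I_{c_y}$ and $M_{1,\bot}'M_1=0$ produce the leading $I_{c_y}$ and the zero beneath it, and $\tilde b_{2,3}:=M_{1,\bot}'b_rH_{r,\bot}$ fills the bottom-right block. The only term not already matching the stated $\tilde b_r$ is $M_1^+(b_rH_{r,\bot})\tilde z_{t,3}$ in the first block row; because $\tilde z_{t,3}$ is stationary I would fold it into the noise, setting $\tilde\ve_{t,1}=M_1^+(b_rH_{r,\bot}\tilde z_{t,3}+u_t)$ and $\tilde\ve_{t,2}=M_{1,\bot}'u_t$, both stationary. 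This yields exactly the claimed identity, with $\tilde y_{t,1}=\tilde z_{t,1}+\tilde\ve_{t,1}$ being I(1) and $\tilde y_{t,2}=\tilde b_{2,3}\tilde z_{t,3}+\tilde\ve_{t,2}$ stationary.

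The remaining structural claims follow from the rank condition on $c(1)$. Differencing gives transfer functions $Q_r^{-1}c_v^{(1)}(z)$ for $[\tilde z_{t,1}',\tilde z_{t,2}']'$ and $c_w^{(1)}(z)$ for $\tilde z_{t,1}^u$, where $c_v^{(1)},c_w^{(1)}$ collect the rows of $c(z)$ belonging to the integrated coordinates; since any selection of rows of the full-row-rank matrix $c(1)$ remains linearly independent and $Q_r^{-1}$ is nonsingular, the stacked matrix $[\tilde c_{z,1}(1)',\tilde c_{z,2}(1)',\tilde c_{z,u}(1)']$ has full column rank. The same row-selection argument together with $\Sigma>0$ shows that the zero-frequency spectra of $\tilde z_{t,3}$ and $\tilde z_{t,2}^u$ are nonsingular. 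I expect the one genuinely delicate point to be the exact zero in the top-right block of $\tilde b_r$: one must see that the stationary regressor block can be folded into $\tilde\ve_{t,1}$ without changing any integration order, while the $c_r-c_y$ integrated directions annihilated by $b_r$ (the middle block) cannot be absorbed and must instead be isolated in advance through the kernel choice defining $Q_r$.
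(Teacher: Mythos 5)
Your part (I) and your construction of ${\mathcal T}_{z,u}$, of ${\mathcal T}_y$, and of the first version of ${\mathcal T}_{z,r}$ agree in substance with the paper's proof: the paper writes the rank factorization abstractly as ${\mathcal T}_y b_r H_{r,\parallel} C = \mbox{diag}(I_{c_y},0)$ where your $Q_r$, $M_1$, $M_1^+$, $M_{1,\bot}$ give a concrete instance of the same step, and your closing remarks on the full column rank of $[\tilde c_{z,1}(1)',\tilde c_{z,2}(1)',\tilde c_{z,u}(1)']$ and the nonsingular spectra are exactly the details the paper omits. The genuine gap is your treatment of the $(1,3)$ block. The theorem requires $\tilde b_r = {\mathcal T}_y b_r {\mathcal T}_{z,r}^{-1}$ \emph{itself} to have a zero $(1,3)$ block, with $\tilde \ve_t = {\mathcal T}_y \Lambda \ve_t$ remaining the transformed innovation; this is how $\tilde b_r$ and $\tilde\ve_t$ are defined in the introduction and in Appendix~\ref{sec:notation}, and everything downstream (the consistency argument, Lemmas~\ref{lem:defhb} and~\ref{lem:asydist}) relies on $\tilde\ve_t$ being a martingale difference uncorrelated with the regressors. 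If you set $\tilde\ve_{t,1} = M_1^+(b_r H_{r,\bot}\tilde z_{t,3} + u_t)$, your ``error'' contains a linear function of the regressor $\tilde z_{t,3}$, so $\la \tilde\ve_{t,1}, \tilde z_{t,3}^\pi\ra$ converges to $M_1^+ b_r H_{r,\bot}\, \mE \tilde z_{t,3}^\Pi (\tilde z_{t,3}^\Pi)' \ne 0$ generically: what you have produced is not the regression representation the theorem asserts, since the population coefficient of $\tilde z_{t,3}$ in the first block row is then not zero. The issue is orthogonality to the regressors, not integration orders, so the observation that folding preserves stationarity does not rescue the step.

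The missing idea --- and the one additional step in the paper's proof --- is to remove the $(1,3)$ block by a second coordinate change on the \emph{regressors} rather than on the error. With $\tilde b_{r,13} := M_1^+ b_r H_{r,\bot}$, premultiply your $\bar{\mathcal T}_{z,r}$ by the block upper triangular matrix having $\tilde b_{r,13}$ in its $(1,3)$ block, i.e.\ replace $\tilde z_{t,1}$ by $\tilde z_{t,1} + \tilde b_{r,13}\tilde z_{t,3}$. Because $\tilde z_{t,3}$ is stationary, the new $\tilde z_{t,1}$ is still I(1) with unchanged $\tilde c_{z,1}(1)$ (the added term is over-differenced and its transfer function vanishes at $z=1$), the $(1,3)$ block of ${\mathcal T}_y b_r {\mathcal T}_{z,r}^{-1}$ is now exactly zero, and $\tilde\ve_t = {\mathcal T}_y \Lambda \ve_t$ is untouched. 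With that correction your argument coincides with the paper's.
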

The result is proved in Appendix~\ref{app:A}. 
It builds the main representation of the regression on which the asymptotic results are
based upon. Note that the matrices ${\mathcal T}_y, {\mathcal T}_{z,u}$ and ${\mathcal T}_{z,r}$ separating the non-stationary 
and stationary directions of the various processes 
are not unique and the theorem only ascertains the existence. The restrictions on the ranks of the various matrices ensures that
the various components are either stationary processes which are not over differenced or integrated processes which are not 
cointegrated. 

Under these assumptions it is well known that the OLS  estimators are weakly consistent \cite{ParkPhillips,ParkPhillips89}.
Furthermore almost sure consistency as well as the convergence rate $\hat \beta_{OLS}- b = O(\sqrt{\log \log T/T})$
(i.e. $\sqrt{T/\log \log T}(\hat \beta_{OLS} - b)$ is almost surely (a.s.) bounded) can be derived, see e.g. \cite{baunote}.  
Additionally their asymptotic distribution is also well documented:
Let ${\cal T}_z = \mbox{diag}({\mathcal T}_{z,r},{\mathcal T}_{z,u})$
and let $D_z = \mbox{diag}(D_{z,r},D_{z,u}), D_{z,r} = \mbox{diag}(T^{-1}I_{c_r},T^{-1/2}I_{m_r-c_r}),
D_{z,u} =\mbox{diag}(T^{-1}I_{c_u},T^{-1/2}I_{m_u-c_u})$.
Then one obtains
$$
{\cal T}_y(\hat \beta_{OLS}- b) {\cal T}_z^{-1}D_z^{-1} \stackrel{d}{\to} \left[\begin{array}{cccc} M_r & Z_r
& M_u - M_r N_r & Z_u - Z_r \mE  \tilde z_{t,3} (\tilde z_{t,2}^u)'
(\mE  \tilde z_{t,2}^u (\tilde z_{t,2}^u)')^{-1}
\end{array}\right]
$$
where (using the notation\footnote{Here and below $\int dE W'$ is the usual shorthand notation for $\int_{0}^1 dE(w) W(w)'$ for Brownian motions $E(w), W(w), w \in [0,1]$. Analogously $\int WW'$ is short for $\int_0^1 W(w)W(w)' dw$.} $f(E,W)= \int dE W' (\int WW')^{-1}$)  
\begin{eqnarray*}
M_r & = & f({\cal T}_y\Lambda W,W_z^\Pi),
\\
M_u & = & f({\cal T}_y\Lambda W,W_u), \\
N_r & = & \int W_zW_u' \left( \int W_uW_u'\right)^{-1}, \\
\mbox{vec}\left[T^{-1/2}\sum_{t=1}^T {\cal T}_y \Lambda \ve_t (\tilde z_{t,3}^\pi)'\la \tilde z_{t,3}^\pi, \tilde z_{t,3}^\pi\ra^{-1}\right] &  \stackrel{d}{\to} & \mbox{vec}(Z_r), \\
\mbox{vec}\left[T^{-1/2}\sum_{t=1}^T {\cal T}_y \Lambda \ve_t (\tilde z_{t,2}^u)'\la \tilde z_{t,2}^u, \tilde z_{t,2}^u \ra^{-1}\right]  & \stackrel{d}{\to}&  \mbox{vec}(Z_u),
\end{eqnarray*}
where $\tilde z_{t,3}^\pi := \tilde z_{t,3} - \la \tilde z_{t,3},
\tilde z_{t,2}^u \ra \la \tilde z_{t,2}^u, \tilde
z_{t,2}^u\ra^{-1} \tilde z_{t,2}^u$. $W$ denotes the Brownian
motion corresponding to $(\ve_t)_{t \in \mN}$ and $W_z = \tilde c_{z,1:2}(1)W, W_u = \tilde c_{z,u}(1)W,
W_z^\Pi = W_z  - \int
W_zW_{u}' (\int W_{u}W_{u}' )^{-1}W_{u}$.
Further $\mbox{vec}(Z_r)$ and
$\mbox{vec}(Z_u)$ are normally distributed
with mean zero (vec denotes columnwise vectorisation).
Finally $\tilde c_{z,1:2}(1):= [\tilde c_{z,1}(1)',\tilde c_{z,2}(1)']'$.

The next theorem, which is the main contribution of this paper, extends these results to the 
RRR estimators:

\begin{theorem} \label{thm:RRR}
(I) Let the assumptions of Theorem~\ref{thm:repr} hold. 
Then 
$\hat \beta_{RRR}- b = O((\lT)^6/\sqrt{T})$.
Furthermore let $\hat \beta_{RRR,r}$ and $\hat \beta_{OLS,r}$ denote the
coefficients corresponding to $z_{t}^r$. Then 
the asymptotic distribution of $\hat \beta_{RRR,r}$ can be found from

$$
[{\mathcal T}_y (\hat \beta_{RRR,r} - \hat \beta_{OLS,r} ){\mathcal T}_{z,r}^{-1}] D_{z,r}^{-1} \stackrel{d}{\to}
-\left[ \begin{array}{cc}
\left[ \begin{array}{c}
-\Xi  \\ I \end{array} \right]
(I-\tO_2\tO_2^\dagger)M_{r,2} \left[ \begin{array}{cc} -Y_{21}Y_{11}^{-1} &  I \end{array} \right]
&
\left[ \begin{array}{c}  \tilde R \\
(I-\tO_2\tO_2^\dagger) \tilde Z_{r,2} P \end{array} \right]  \end{array} \right]
$$
where
\begin{eqnarray*}
\Xi & = & - [I,0]{\mathcal T}_y \Lambda  \mE  \ve_{t}\tilde y_{t,2}'(\mE \tilde y_{t,2}^\Pi\tilde y_{t,2}')^{-1}, \\
M_{r,2}  & = &
f([0,I]{\mathcal T}_y \Lambda W,W_{z,2}^\Pi-Y_{21}Y_{11}^{-1}W_{z,1}^\Pi),\\
Y_{i1} & =& \int W_{z,i}^\Pi (W_{z,1}^\Pi)', i=1,2, \\ 
P & = & I - \mE \tilde z_{t,3}^\Pi(\tilde z_{t,3}^\Pi)' \Gamma_{3,2}\Gamma_{3,2}^\dagger,
\Gamma_{3,2}^\dagger = (\Gamma_{3,2}' \mE \tilde z_{t,3}^\Pi(\tilde z_{t,3}^\Pi)' \Gamma_{3,2})^{-1}\Gamma_{3,2}',\\
\tilde b_{2,3}  & = & \tO_2\Gamma_{3,2}', 
\tO_2^\dagger  =  (\tO_2'(\mE \tilde y_{t,2}^\Pi\tilde y_{t,2}')^{-1}\tO_2)^{-1}\tO_2'(\mE \tilde y_{t,2}^\Pi\tilde y_{t,2}')^{-1}.
\end{eqnarray*}
Here $\tilde Z_{r,2} =[0,I] Z_r$,
$W_z^\Pi = [(W_{z,1}^\Pi)',(W_{z,2}^\Pi)']'$
and
$\tilde z_{t,3}^\Pi = \tilde z_{t,3} - \mE \tilde z_{t,3} (\tilde z_{t,2}^u)' (\mE  \tilde z_{t,2}^u  (\tilde z_{t,2}^u)' )^{-1} \tilde z_{t,2}^u$
and $\tilde y_{t,2}^\Pi$ is defined analogously.
Finally $\tilde R$ is defined in Lemma~\ref{lem:asydist}.
Correspondingly letting $\tilde \beta_{RRR,u}$ and $\tilde \beta_{OLS,u}$ denote the coefficients
corresponding to $z_{t}^u$ then
$$
{\mathcal T}_y(\hat \beta_{RRR,u}-\hat \beta_{OLS,u}){\mathcal T}_{z,u}^{-1}D_{z,u}^{-1} = -{\mathcal T}_y(\hat \beta_{RRR,z}-\hat \beta_{OLS,z}){\mathcal T}_{z,r}^{-1}
D_{z,u}^{-1} \left[\begin{array}{cc} N_r & 0 \\ 0 & \mE  \tilde z_{t,3} (\tilde z_{t,2}^u)'
(\mE  \tilde z_{t,2}^u (\tilde z_{t,2}^u)')^{-1} \end{array} \right]+ o_P(1).
$$
(II) All results hold true in the situation that  
all observations are demeaned or detrended prior to estimation, if a.s. rates are replaced with in probability rates, the Brownian motions are replaced by 
their corresponding demeaned or detrended version  and if additionally to the assumptions above the 
condition $\sum_{j=1}^\infty j^a \| C_j \| < \infty$ holds for some $a>3$.
\end{theorem}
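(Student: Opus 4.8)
The guiding idea is to treat $\hat \beta_{RRR,r}=(\hWf)^{-1}\hat U_n\hat S_n\hat V_n'(\hWp)^{-1}$ as a single matrix throughout, never as a product of separate estimates of $\tO$ and $\tilde\Gamma$. The truncated SVD $\hat U_n\hat S_n\hat V_n'$ is the weighted orthogonal projection of $\hat \beta_{OLS,r}$ onto the rank-$n$ manifold, and this product is invariant under the rotational indeterminacy of the factors. This is exactly what removes the obstruction described in the introduction: the term $(\check O-\tO)\tilde\Gamma'\la \tilde z_t^{r,\pi},\tilde z_t^{r,\pi}\ra$ that fails to vanish for $c_y>0$ arises only when the factors are handled separately, and it never enters once the estimator is a single projection. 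First I would pass to the canonical coordinates of Theorem~\ref{thm:repr}, so that $b_r$ becomes $\tilde b_r=\tO\tilde\Gamma'$ with the explicit block form, and compute the limits of the two weighting matrices: a functional central limit theorem for the partial sums of $\nu_t$ (available under Assumption~P with $\sum j^a\|C_j\|<\infty$) gives joint convergence of the scaled blocks of $\la \tilde z_t^\pi,\tilde z_t^\pi\ra$ and $\la \tilde y_t^\pi,\tilde y_t^\pi\ra$, the nonstationary directions to functionals of $W_z,W_u$ and the stationary directions to deterministic second moments such as $\mE \tilde z_{t,3}^\Pi(\tilde z_{t,3}^\Pi)'$.

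The crucial bookkeeping is that the $z$-side weighting $\hWp=\la z_t^\pi,z_t^\pi\ra^{1/2}$ inflates each nonstationary direction by exactly the factor that the superconsistency of $\hat \beta_{OLS,r}$ deflates, so that the weighted perturbation $\hWf(\hat \beta_{OLS,r}-b_r)\hWp$ is uniformly of order $(\lT)^c/\sqrt T$ a.s.\ across all blocks, combining the a.s.\ OLS rate with standard iterated-logarithm-type bounds for functionals of integrated processes. The consistency claim then follows from perturbation theory for the dominant subspace: the top $n$ singular values of $\hWf\hat \beta_{OLS,r}\hWp$ are eventually bounded away from zero a.s., the remaining $m_r-n$ are controlled by the weighted perturbation and hence of order $(\lT)^c/\sqrt T$, so the gap is bounded below by a constant and the $n$-dimensional dominant projection is within $O((\lT)^c/\sqrt T)$ of the true one; tracking the accumulated logarithmic factors yields the stated power six. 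Neither distinct singular values nor any gap inside the signal block is needed, since only the projection $\hat U_n\hat S_n\hat V_n'$ enters.

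For the asymptotic distribution I would linearise the rank-$n$ truncation about $\tilde b_r$. Writing $A=\hWf\hat \beta_{OLS,r}\hWp=A_0+E$ with $A_0$ of rank $n$ and $E$ the weighted perturbation, the best rank-$n$ approximation expands as $A_0+E-P_{U^\perp}EP_{V^\perp}+o_P(\|E\|)$, where $P_{U^\perp},P_{V^\perp}$ are the limiting weighted projections onto the orthogonal complements of the column and row spaces of $A_0$. Transforming back and reinstating $D_{z,r}$ identifies $\hat \beta_{RRR,r}-\hat \beta_{OLS,r}$ to leading order with $-P_{U^\perp}(\hat \beta_{OLS,r}-b_r)P_{V^\perp}$: the left projector $P_{U^\perp}$ acts only on the lower block and equals $I-\tO_2\tO_2^\dagger$ in the $\hWf$ metric, giving the row structure $[-\Xi,I]'$ with $\Xi$ coming from the correlation of the top nonstationary residual $\tilde\ve_{t,1}$ with $\tilde y_{t,2}$, while $P_{V^\perp}$ produces the projector $P$ on the stationary columns and the Brownian functional $M_{r,2}$ on the nonstationary columns. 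Substituting the limits of ${\mathcal T}_y(\hat \beta_{OLS}-b){\mathcal T}_z^{-1}D_z^{-1}$ recorded before the theorem then gives the displayed limit, with the stationary-direction term $\tilde R$ (built from $Z_r$ and $P$) isolated in Lemma~\ref{lem:asydist}. The statement for $\hat \beta_{RRR,u}$ is immediate from~\eqref{def:RRRest}, which yields $\hat \beta_{RRR,u}-\hat \beta_{OLS,u}=-(\hat \beta_{RRR,r}-\hat \beta_{OLS,r})\la z_t^r,z_t^u\ra\la z_t^u,z_t^u\ra^{-1}$; inserting the block limits $N_r$ and $\mE \tilde z_{t,3}(\tilde z_{t,2}^u)'(\mE \tilde z_{t,2}^u(\tilde z_{t,2}^u)')^{-1}$ of $\la z_t^r,z_t^u\ra\la z_t^u,z_t^u\ra^{-1}$ gives the stated expression up to $o_P(1)$.

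Part~(II) is obtained by rerunning the argument with every partial-sum limit replaced by its demeaned or detrended counterpart; since demeaning and detrending are continuous functionals of the sample paths, the functional CLT delivers the correspondingly demeaned or detrended Brownian motions in each limit, and the only additional cost is that the strengthened summability $\sum j^a\|C_j\|<\infty$ with $a>3$ is required to retain the invariance-principle remainders after subtracting the estimated trend, at the price of replacing a.s.\ rates by in-probability rates. The main obstacle is the consistency step: securing the uniform $(\lT)^c/\sqrt T$ order of the weighted perturbation simultaneously in the nonstationary and stationary blocks, a.s.\ and with controlled logarithmic factors, precisely when $c_y>0$ renders $\tilde\Gamma'\tilde z_t^r$ nonstationary and the Johansen scaling $A_T$ degenerates. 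Handling $\hat \beta_{RRR,r}$ as a single projection rather than through the factors $\tO$ and $\tilde\Gamma$ is what makes the balancing of the two rates go through.
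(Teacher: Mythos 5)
Your treatment of the estimator as a single rank-$n$ projection is adequate for the consistency and rate claim: the identity ``residual $=P_{\hat U^\perp}(\hWf\hat\beta_{OLS,r}\hWp)P_{\hat V^\perp}$'' is exact, the weighted perturbation is indeed uniformly $O((\lT)^c/\sqrt T)$ a.s., and a dominant-subspace argument delivers the uniform rate; the paper reaches the same conclusion through a normalized factorization (Lemma~\ref{lem:dKp}) and a generalized eigenvalue problem, which is a presentational rather than substantive difference. The gap is in the distributional part. Your linearisation ``best rank-$n$ approximation $=A_0+E-P_{U^\perp}EP_{V^\perp}+o_P(\|E\|)$'' carries a remainder of order $\|E\|^2=O((\lT)^c/T)$ in the weighted coordinates. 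That is harmless for the blocks of ${\mathcal T}_y(\hat\beta_{RRR,r}-\hat\beta_{OLS,r}){\mathcal T}_{z,r}^{-1}$ that $D_{z,r}^{-1}$ rescales by $\sqrt T$, but the first block row must be known to precision $o(T^{-1})$ in its nonstationary columns (and the corresponding weighted entries to the same precision, since the output and regressor weightings cancel there), so the quadratic terms are \emph{not} negligible. Worse, the first-order term itself vanishes in that row: the limiting left projector is $\mbox{diag}(0,\,I-\tO_2\tO_2^\dagger)$, which annihilates the first block row --- exactly as you say when you write that $P_{U^\perp}$ ``acts only on the lower block.'' That directly contradicts the claimed row structure $[-\Xi',I']'$: the entire $\Xi$-row of the limit, and the entry $\tilde R$, are second-order effects, built from products of two $O_P(T^{-1/2})$ perturbations.

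In the paper this is precisely the content of Lemmas~\ref{lem:asydeltaG}--\ref{lem:asydist}: the quantity $\delta H=\delta G_{2,1}-\delta G_{2,2}(\Gamma_{3,2}^\dagger)'\mE\,\tilde z_{t,3}^\Pi(\tilde z_{t,3}^\Pi)'\,\delta G_{3,1}$ is quadratic in the perturbation (and $\delta G_{2,1}$ itself contains the product $(J_{2.1,3}-\delta_{zz}^{2.1,3})\delta G_{3,1}$), yet $T\delta H$ converges to $N'$ with $N=[[I,0]+\Xi(I-\tO_2\tO_2^\dagger)[0,I]]M_r$, which is where $\Xi$ enters the first block row after subtracting the OLS limit. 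Your proposal asserts the $\Xi$-structure of the answer but supplies no mechanism within the expansion that could produce it. To repair the argument you would have to carry the expansion of the estimated projectors (or of the generalized eigenvectors, as the paper does) to second order in the blocks that mix nonstationary and stationary directions, and determine which quadratic products survive the $T$-rescaling. This is the same difficulty, in different clothing, that the introduction identifies as the reason Johansen's Lemma~13.2 fails when $c_y>0$; packaging the estimator as a single projection removes the factor indeterminacy but not the need for a second-order analysis.
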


Note that the decomposition of $\tilde b_{2,3}$ is not specified. The asymptotic distribution does not depend on the actual choice. 

The theorem shows how the inclusion of the rank constraint
affects the estimation error which is given as a sum of the error
for the unrestricted estimate plus a correction term. All coefficients
corresponding to the nonstationary directions in $z_t$ are estimated $T$-consistent and
asymptotically the estimation errors have 'matrix unit root' distributions,
whereas for directions in which $(z_t)_{t \in \mN}$ is 
stationary the coefficients are only
$\sqrt{T}$ consistent and the errors are asymptotically normal. 
The proof of this theorem is given in  Appendix~\ref{app:A}.
Note that the larger bounds in the almost sure convergence rates for the restricted estimator reflects only the techniques of proof used and not
the accuracy of the estimators which is more appropriately represented in the distributional results. I.e. the larger bounds for the 
rank restricted estimator mirrors our inability to prove the tighter bounds rather than the relative accuracy of the estimators.

In the fully modified case conditions for consistency and the asymptotic distribution
of the unrestricted estimator 
is provided in \cite{Phillips95}:
Under the assumptions on the kernel provided in Assumptions~K 
one obtains:
$$
{\mathcal T_y}(\hat \beta_{OLS}^+ -  b) {\mathcal T_z}^{-1} D_z^{-1} \stackrel{d}{\to}
\left[\begin{array}{cccc} M_r^+ & Z_r
& M_u^+ - M_r^+ N_r & Z_u - Z_r \mE  \tilde z_{t,3} (\tilde z_{t,2}^u)'
(\mE  \tilde z_{t,2}^u (\tilde z_{t,2}^u)')^{-1}
\end{array}\right]
$$
where
\begin{eqnarray*}
B & = & \Lambda W - \Omega_{u,\Delta z}^{:,n} (\Omega_{\Delta z,\Delta z}^{n,n})^{-1}
\left[\begin{array}{c} \tilde c_{z,1:2}(1)W \\ \tilde c_{z,u}(1)W \end{array} \right],\\
M_r^+ & = & f(B,W_{z}^\Pi), \\
M_u^+ & = & f(B,\tilde c_{z,u}(1)W).
\end{eqnarray*}
Here the superscript $n$ refers to the nonstationary directions in $[\tilde z_t',(\tilde z_t^u)']'$
and the matrices $ \Omega_{u,\Delta z}^{:,n}$ and $\Omega_{\Delta z,\Delta z}^{n,n}$ are composed of the respective columns and rows corresponding to the
nonstationary components.
 
The next theorem discusses the properties of the corresponding
rank restricted estimator:
\begin{theorem} \label{thm:FM}
Let the assumptions of Theorem~\ref{thm:RRR} hold and additionally
assume that a kernel function fulfilling assumptions~K is used in the nonparametric estimation
of the long run variances.
%$\Omega$ and $\Delta$. 
%Define the
% according to~\eqref{equ:defFMOLS}.
Let $\tilde \beta_{RRR}^+$ denote the FM-RRR estimator (based on the fully modified estimator $\tilde \beta_{OLS}^+$)
defined in~\eqref{equ:defFMRRR}
for the weight $\hWf$ as defined in~\eqref{eq:defWff}. Then using the notation of Theorem~\ref{thm:RRR}
it holds that
$$
[{\mathcal T}_y (\hat \beta_{RRR,r}^+ - \hat \beta_{OLS,r}^+ ){\cal T}_{z,r}^{-1}] D_{z,r}^{-1} \stackrel{d}{\to}
-\left[ \begin{array}{cc}
\left[ \begin{array}{c}
\Xi  \\ I \end{array} \right]
(I-\tilde O_2\tilde O_2^\dagger)M_2^+ \left[ \begin{array}{cc} -Y_{21}Y_{11}^{-1} &  I \end{array} \right]
&
\left[ \begin{array}{c} \tilde R \\
(I-\tilde O_2\tilde O_2^\dagger)\tilde Z_{r,2}P \end{array} \right]  \end{array} \right].
$$
where $M_2^+ =
f([0,I]{\mathcal T}_y \Lambda B,W_{z,2}^\Pi-Y_{21}Y_{11}^{-1}W_{z,1}^\Pi)$.
\end{theorem}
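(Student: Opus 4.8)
The plan is to reuse the argument of Theorem~\ref{thm:RRR} almost verbatim, replacing the unrestricted estimator $\hat\beta_{OLS}$ by the fully modified estimator $\hat\beta_{OLS}^+$ and the weight $\hWf$ by the fully modified weight $\hWff$ of~\eqref{eq:defWff}, and then identifying which blocks of the limit are changed by the modification. The key observation is that $\hat\beta_{RRR,r}^+$ is produced from $\hat\beta_{OLS,r}^+$ by exactly the same weighted singular value decomposition (equivalently, the same generalized eigenvalue problem) that produces $\hat\beta_{RRR,r}$ from $\hat\beta_{OLS,r}$. Hence the perturbation analysis of that decomposition carried out in the proof of Theorem~\ref{thm:RRR}, together with the linearisation of the rank-$n$ projection around the true $\tilde b_r$, transfers unchanged once the asymptotic behaviour of the two inputs—the unrestricted estimator and the weight—has been pinned down.

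The first input is the limit of the unrestricted fully modified estimator, which is the display preceding the theorem (from~\cite{Phillips95}). Compared with the OLS limit used in Theorem~\ref{thm:RRR}, only the nonstationary-direction blocks are altered: the error Brownian motion feeding the matrix-unit-root terms is replaced by its purified counterpart $B$, while the stationary-direction terms $Z_r$ and $Z_u$ are literally the same. The second input is that $\hWff$ shares the stationary-direction limit of $\hWf$. Since $(\ve_t)_{t\in\mZ}$ is iid, the long-run variance of the error coincides with its contemporaneous variance in the stationary block, so the fully modified correction appearing in~\eqref{eq:defWff} is $o_P(1)$ for the $\sqrt{T}$-scaled stationary components of $y_t^\pi$; consequently $\hWff$ converges to the same limit $(\mE\tilde y_{t,2}^\Pi\tilde y_{t,2}')^{-1/2}$ as $\hWf$ in those directions. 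This is precisely what lets the quantities $\Xi$, $\tilde O_2^\dagger$ and $P$ of Theorem~\ref{thm:RRR} be reused without change.

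Granting these inputs, I would run the three stages of the earlier proof. First, consistency and rate: $\hat\beta_{OLS}^+$ is consistent with the stationary and nonstationary rates encoded in $D_z$ (up to the nonparametric estimation error of the long-run variances), and since the weighted rank-$n$ projection is continuous at the true $\tilde b_r$, the estimator $\hat\beta_{RRR,r}^+$ is consistent as well; only in-probability rates are available here, which is why no almost sure statement appears in the theorem. Second, I would linearise the decomposition about the true rank-$n$ structure; because the weight limits and the signal-plus-noise splitting agree with the OLS case, the linearisation reproduces the functional form of Theorem~\ref{thm:RRR} with the single change that the nonstationary-direction object $M_{r,2}$ is replaced by $M_2^+$—that is, the error Brownian motion is replaced by $B$—while the denominator $W_{z,2}^\Pi-Y_{21}Y_{11}^{-1}W_{z,1}^\Pi$ is retained. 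Third, the stationary-direction correction $\tilde Z_{r,2}P$ and the block $\tilde R$ are inherited unchanged, as they are driven by the stationary components only, for which the fully modified and OLS constructions agree asymptotically.

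The hard part is the second stage. One has to show that the fully modified adjustment acts solely on the $T$-consistent nonstationary block—converting the error Brownian motion into $B$ there—while leaving the $\sqrt{T}$-consistent stationary block intact, and that this separation is preserved through the linearisation. Because the joint scaling by $D_{z,r}$ mixes the two convergence rates, the delicate step is to verify that the cross terms generated by the correction vanish after scaling (are $o_P(1)$) and do not feed the purified numerator back into the stationary block; the contribution of the contemporaneous endogeneity term carried by $\Xi$ must also be re-tracked, since the correction modifies exactly the directions in which this term operates. A further technical point, already present for $c_y>0$ in Theorem~\ref{thm:RRR}, is that $y_t^\pi$ carries $c_y$ integrated coordinates, so the corresponding block of $\hWff^{-1}$ diverges; one must check that this divergent block is annihilated in the product $\hWff\hat\beta_{OLS,r}^+\la z_t,z_t\ra^{1/2}$ entering the decomposition, so that a nondegenerate limit survives.
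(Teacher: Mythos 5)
Your proposal follows essentially the same route as the paper: the proof there reruns the argument of Theorem~\ref{thm:RRR} with the moment matrices replaced by their fully modified counterparts, uses Lemma~\ref{lem:omegadelta} and Lemma~\ref{lem:addterms} to show that the correction terms $\hat B^+$ and $\hat C^+$ contribute only in the nonstationary directions (turning $W$ into $B$) while being $o_P(T^{-1/2})$ in the stationary blocks, and then repeats Lemmas~\ref{lem:SVD}--\ref{lem:asydist} in the forms \ref{lem:SVDfm}, \ref{lem:defhbfm} and \ref{lem:asydistfm} with almost sure rates downgraded to in-probability ones. The only slight difference is in your justification for why $\hWff$ and $\hWf$ share the same stationary-direction limit: in the paper this follows from the bandwidth conditions of Assumption~K via the orders $O_P(K^{-2})+O_P(1/\sqrt{KT})$ in Lemma~\ref{lem:omegadelta}, not from the iid property of the errors per se, but the conclusion and the overall structure of the argument coincide.
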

Therefore the relation between the restricted and the unrestricted regressions
are identical for the conventional and the fully modified case. Also the
expressions for the two sets of estimators are identical except for the use
of $W$ in the conventional case which is replaced by $B$ in the fully modified
case.
Therefore it follows that the distribution in the direction of
(asymptotically) stationary components of $w_t$ is identical for both
estimators. Hence in the case that $c_z=0$ and therefore no integration
is present the conventional and the fully modified estimators
have the same asymptotic distribution. This is true for the restricted and the unrestricted
estimates. We refrain from a more complete discussion on the properties of the
fully modified estimator since for the unrestricted case these are
well documented in the literature.
Instead a number of special cases will be discussed below. 

\section{Special Cases} \label{sec:specialcases}
First consider the case where all included variables are stationary.
In that case $\T_y=I, \T_z = I$ can be used and the asymptotic distribution of the vectorizations of 
$\sqrt{T}(\hat \beta_{OLS} -  b)$ and 
$\sqrt{T}(\hat \beta_{OLS}^+ - b)$ are both normal with mean zero and variance $(\mE z_tz_t')^{-1} \otimes \Lambda \Sigma \Lambda'$
which equals the distribution of 

$$
\mbox{vec}\left(\left[Z_r,\quad Z_u - Z_r \mE \tilde z_{t,3} (\tilde z_{t,2}^u)'(\mE \tilde z_{t,2}^u (\tilde z_{t,2}^u)')^{-1} \right] \right)
$$ 

noting that in this case $\tilde z_t = \tilde z_{t,3} = z_t^r, \tilde z_{t,2}^u = z_t^u$ can be chosen. 
The correction due to the rank restriction for $\beta_r$ equals the vectorization of 
$$
-(I-\tilde O_2\tilde O_2^\dagger)Z_r(I- \mE \tilde z_{t,3} 
(\tilde z_{t,3})' \Gamma_{32}\Gamma_{32}^\dagger).
$$ 

The corresponding correction to $b_u$ follows. On total hence one obtains as the asymptotic distribution
of the RRR-estimator for $b_r$ the distribution of 
$
Z_r -(I-\tilde O_2\tilde O_2^\dagger)Z_r(I- \mE \tilde z_{t,3} 
(\tilde z_{t,3})' \Gamma_{32}\Gamma_{32}^\dagger). 
$

This asymptotic distribution (for a generic case) has previously been documented in \cite{Reinsel} on p. 45 (2.36) albeit in a different form which is less accessible.
On p. 46 a more explicit expression for the case $n=1$ is given. 
It is straightforward to see that the expressions in this special case are identical while the formula provided above also provide insights in the general case.
It must be noted, however, that these expressions are not new and have been used already e.g. in \cite{BauDeiSch96}.   

The consequences of the correction using the rank restriction are the following: 
Premultiplying the asymptotic distribution with $\tilde O_2^\dagger$ one notices that the rank restriction does not influence the distribution in these directions. 
In the orthogonal complement, however, the distribution is changed from $x' Z_r$ to $x' Z_r \mE \tilde z_{t,3} 
(\tilde z_{t,3})' \Gamma_{32}\Gamma_{32}^\dagger$ and hence projected onto the rows corresponding to the space spanned by the columns of $\Gamma_{32}$. 
The analogous statements hold for the postmultiplication with $\Gamma_{32}$. 
Note that these arguments also hold in the general case for the $(2,3)$ block of $\tilde b_r$.

As a second special case consider the VAR(1) I(1) model of \citeasnoun{Anderson02}. For simplicity of notation the transformed 
system will be used which in the notation of \citeasnoun{Anderson02} is stated as 
\begin{equation} \label{eq:dXX}
\Delta X_t = \Upsilon X_{t-1} + W_t = \left[ \begin{array}{cc} 0_{c_y \times c_y} & 0 \\ 0 & \Upsilon_{22} \end{array} \right] X_{t-1} + W_t, t \in \mN
\end{equation}
for $X_0=0$ where $\Upsilon_{22}$ is nonsingular. This defines an I(1) process $X_t \in \mR^s$ whose first component, $X_{t,1}\in \mR^{c_y}$ say, 
is integrated, the remaining, $X_{t,2} \in \mR^{s-c_y}$ say, being stationary for 
$|\lambda_{max}(I+\Upsilon_{22})|<1$ which is assumed in the following. The variance of the iid white noise $W_t$ is taken to 
be $[\Sigma_{ij}^W]_{i,j=1,2}$ which is partitioned according to the partitioning of $X_t$. 
In our notation no transformation matrices are needed since the system is already in the appropriate coordinate system. 
Hence $y_t = \tilde y_{t,2} = \Delta X_t$ is stationary, $\tilde z_{t,2} = X_{t-1,1},\tilde z_{t,3} = X_{t-1,2}, z_{t}^u$ does not 
occur. Consequently $\tilde b_r = \Upsilon = [\Upsilon_{:, 1},\Upsilon_{:, 2}]$ and $b_u$ does not occur. 

In this situation \cite{Anderson02} gives the asymptotic distribution of the unrestricted and the restricted estimates 
of $\Upsilon$. Consider first $\Upsilon_{:, 1}$, i.e. the first block column. Then Theorem 1 of \cite{Anderson02} 
states that $T\tilde \Upsilon_{:, 1,OLS} \stackrel{d}{\to} J_{:, 1} I_{11}^{-1}$ which in our notation equals $f(W,W_1)$ where 
$W$ is the Brownian motion according to $u_t = W_t$ and $W_1$ denotes the corresponding first block.  
For $\tilde \beta_{RRR,r}$ \cite{Anderson02} states the asymptotic distribution of the first block column 
as 
$$
T \tilde \Upsilon_{:, 1,RRR} \stackrel{d}{\to} \left[ \begin{array}{c} 0 \\ J_{2.1}I_{11}^{-1} \end{array} \right], J_{2.1} = [-\Sigma_{WW}^{21}(\Sigma_{WW}^{11})^{-1},I]J_{:, 1}.
$$
From Theorem~\ref{thm:RRR} we obtain 
$$
T\tilde \beta_{:, 1,RRR} = T\tilde \beta_{:, 1,OLS} + T(\tilde \beta_{:, 1,RRR}-\tilde \beta_{:, 1,OLS}) 
\stackrel{d}{\to} f(W,W_1) -(I-\tilde O_2\tilde O_2^\dagger)f(W,W_1) = \tilde O_2\tilde O_2^\dagger J_{:, 1}I_{11}^{-1}.
$$
The second block column of $\tilde b_r$ provides the decomposition $\tilde O_2 := [0,I]',\Gamma_{32}' := \Upsilon_{22}$.
Here $\tO_2\tO_2^\dagger = [0,I]'([0,I] \YY^{-1} [0,I]')^{-1} [0,I]\YY^{-1}$ where 
\begin{eqnarray*}
\YY^{-1} & = & 
\left[ \begin{array}{cc} \Sigma_{WW}^{11} & \Sigma_{WW}^{12} \\ \Sigma_{WW}^{21} & Q \end{array}\right]^{-1}
\\ 
& = &\left[ \begin{array}{cc} (\Sigma_{WW}^{11})^{-1} & 0 \\ 0 & 0 \end{array}\right]
+ \left[ \begin{array}{c} -(\Sigma_{WW}^{11})^{-1} \Sigma_{WW}^{12} \\ I \end{array} \right] 
\left( Q- \Sigma_{WW}^{21} (\Sigma_{WW}^{11})^{-1} \Sigma_{WW}^{12}\right)^{-1}
[-\Sigma_{WW}^{21}(\Sigma_{WW}^{11})^{-1},I]
\end{eqnarray*}
(for some matrix $Q$) 
according to the block matrix inversion. Thus $\tO_2\tO_2^\dagger = [0,I]'[-\Sigma_{WW}^{21}(\Sigma_{WW}^{11})^{-1},I]$ showing the 
identity of the expressions. 

For the FM estimator note that the involved long run covariances equal
$$
\Omega_{u,\Delta z}^{:,n} =  \Sigma_{WW}^{:, 1},\quad \Omega_{\Delta z, \Delta z}^{n,n} = \Sigma_{WW}^{11}
$$
with all other terms being zero. Correspondingly 
$$
\Omega_{u,\Delta z}^{:,n}(\Omega_{\Delta z, \Delta z}^{n,n})^{-1} = [I,(\Sigma_{WW}^{11})^{-1}\Sigma_{WW}^{12}]' \Rightarrow 
J_{2.1}I_{11}^{-1} = f(B,W). 
$$

Thus it follows that the coefficients to the nonstationary regressors for the FM-estimator have the same asymptotic distribution
as the RRR estimators. Adding the rank restriction in this case does not change the {\em asymptotic distribution} while it might well influence the finite sample properties. 
It is straightforward to show that in this case also the RRR-FM estimator has the same distribution for the columns corresponding to the integrated regressors. 

With respect to the stationary directions it is easy to see that $P = 0$ since $\Gamma_{32}=\Upsilon_{22}$ is invertible. Consequently the 
RRR estimator and the OLS estimator have the same asymptotic distribution in the columns corresponding to the 
stationary regressors. Since FM and OLS estimators have the same asymptotic behavior for stationary regressors all four 
estimators show the same asymptotic behavior in these columns. The underlying reason for this is that the rank restriction 
exclusively applies to the nonstationary restrictions where the corresponding coefficient is restricted to zero. For the stationary regressors there are no other rank restrictions in this case.  

Adding additional lagged first differences to \eqref{eq:dXX} the AR(p) setting with transformed coordinates is obtained. 
The additional coefficients are not restricted (except for the seldom imposed restriction of stability of the 
corresponding transfer function) and hence in this case $z_{t}^u = \tilde z_{t,2}^u = [\Delta X_{t-1}',\dots,\Delta 
X_{t-p+1}']'$, i.e. additional stationary regressors are present. It is well known that in this case (using the 
usual notation such that $\Upsilon = \alpha \beta'$ where $\alpha_\perp' \alpha = 0, \beta_\perp' \beta = 0$ for orthogonal matrices $\alpha_\perp, \beta_\perp$ of maximal dimension such that the columns span the 
orthogonal complement of $\alpha, \beta$ respectively) we have
\begin{eqnarray}
X_t & = &
\beta_\perp(\Gamma_J)^{-1}\left(\alpha_\perp'\sum_{j=1}^{t-1}\ve_{t-j}
+ X_{1}\right) + w_t,
\end{eqnarray}
for some stationary process $w_t$ and nonsingular matrix $\Gamma_J$ (expressions could be given but are not of importance in the following and hence omitted). 
In the example $\alpha' = [0,\Upsilon_{22}'], \beta' = [0,I]$ and thus $\alpha_\perp' = [I,0],
\beta_\perp = [I,0]'$.  

The changes in the asymptotic distribution are the following: $W$ is unchanged while $W_z = \Gamma_J^{-1}W_1$. 
The stationary components change accordingly. 
Imposing the rank restriction (as is done in the Johansen quasi-ML estimators)
does not change the asymptotic distribution of the coefficients corresponding to the stationary terms as in the AR(1) case
presented above since $\Gamma_{32}$ again is nonsingular and hence $P=0$. 
Thus we obtain the same asymptotic distribution as in the non restricted case. This asymptotic distribution is also given in Theorem 13.5. of \cite{Johansen}.

For the coefficients corresponding to nonstationary coordinates we obtain analogously to above
$$
T\tilde \beta_{:, 1,RRR} = T\tilde \beta_{:, 1,OLS} + T(\tilde \beta_{:, 1,RRR}-\tilde \beta_{:, 1,OLS}) 
\stackrel{d}{\to} \tO_2\tO_2^\dagger f(W,W_z) 
= 
\left[ \begin{array}{c} 0 \\ I\end{array} \right] [-\Sigma_{WW}^{21}(\Sigma_{WW}^{11})^{-1},I] f(W,W_z).
$$

For the FM estimator note that the involved long run covariances equal
$$
\Omega_{u,\Delta z}^{:,n} =  \Sigma_{WW}^{:, 1}(\Gamma_J^{-1})',\Omega_{\Delta z, \Delta z}^{n,n} = \Gamma_J^{-1}\Sigma_{WW}^{11}(\Gamma_J^{-1})'
$$
due to the change in the nonstationary directions. 
Then $\Omega_{u,\Delta z}^{:,n}(\Omega_{\Delta z, \Delta z}^{n,n})^{-1} = [I,(\Sigma_{WW}^{11})^{-1}\Sigma_{WW}^{12}]'(\Gamma_J^{-1})'$
as above 
implies that again the unrestricted FM estimator has the same asymptotic distribution as the RRR estimator. 
This is remarkable since the FM estimator does not require the specification of the rank restriction. 
This has already been observed in \cite{Phillips95} but apparently did not draw the attention of the community. 

\section{Conclusions}  \label{sec:concl}
In this paper the asymptotic properties for 
two estimators in a regression setting explicitly imposing a rank restriction are discussed.
Beside providing 
(almost sure) rates of convergence also explicit expressions for the asymptotic distribution of transformed estimators (such 
that stationary and nonstationary coordinates are separated) are provided. These expressions reveal the main characteristics 
of the estimators and allow insights into the relative merits of the various methods such as the gain in asymptotic accuracy 
obtained by imposing the rank restriction. 
In particular it is shown that the
fully modified estimators in many situations achieve the same asymptotic distribution as the rank restricted regression OLS 
estimators without imposing the rank restriction. This is an attractive feature in situations where the rank is not known. 

The  results contain a number of well known situations as special cases and even in some of these cases allow new insights as the 
previously published expressions for the asymptotic distribution are much more complicated to interpret. 

Finally it must be noted that the results in this paper are seen to be intermediate results that might in many cases not seem to be 
relevant as they relate to transformed estimators where the transformations are not known during the estimation. Nevertheless, the results
are important ingredients to explore the properties of procedures that use the RRR as an intermediate step. 
An important example are subspace methods in the case
of cointegrated processes. These results will be presented elsewhere.

\bibliographystyle{kluwer}

%\bibliography{../../lit2}

\appendix 

\section{Proofs} \label{app:A}

Throughout the appendix the following notation will be
heavily used:
For a sequence of random matrices $F_T$ with elements $F_{i,j,T}$ and
a sequence of scalars $g_T$ we will use
the notation $F_T = o(g_T)$ if $\lim \sup_{T\to \infty} \max_{i,j} | F_{i,j,T}/g_T | \to 0$ almost surely
(a.s.). Similarly $F_T = O(g_T)$ if there exists a constant $M$ such that
$\lim \sup_{T\to \infty} \max_{i,j} | F_{i,j,T}/g_T | \le M$ a.s.
The corresponding in probability versions are:
$F_T = o_P(g_T)$ if $\max_{i,j} | F_{i,j,T}/g_T | \to 0$ in probability
and $F_T = O_P(g_T)$ if for each $\ve>0$ there exists a constant $M(\ve)<\infty$ such that
$ \lim_{T\to \infty}\mP \{ \max_{i,j} | F_{i,j,T}/g_T | > M(\ve) \} < \ve$.
In all these statements $T$ denotes the sample size.
Therefore in particular convergence in distribution to a finite dimensional
 almost surely finite
random variable implies the rate $O_P(1)$. 
Throughout convergence in probability will be denoted as
$\stackrel{p}{\to}$ and
convergence in distribution as $\stackrel{d}{\to}$. Almost sure (a.s.) convergence is denoted as $\to$.
 $\|. \|$ denotes the Euclidean norm if not
stated explicitly otherwise. $\| . \|_{Fr}$ is used to denote the Frobenius norm.
As usual the integral
$\int W_1 W_2'$ is short notation for $\int_0^1 W_1(\omega)W_2(\omega)'d\omega$
and $\int dW_1 W_2'$ is short for $\int_0^1 dW_1(\omega)W_2(\omega)'$.
Here $W_1(\omega)$ and $W_2(\omega)$ are two Brownian motions on $[0,1]$.

\subsection{Preliminary lemmas}

\begin{lemma} \label{lem:bound}
(I) Let $(\ve_t)_{t \in \mZ}$ denote
a white noise sequence which fulfills the noise assumptions contained in
Assumption P.
Define $x_{t,1} := \sum_{j=1}^{t-1}\ve_j, t\ge2, x_1=0, v_t :=
c_v(L)\ve_{t}:= \sum_{i=1}^\infty C_{v,i}\ve_{t-i}, t\in \mN,$ for
some transfer function $c_v(z):=\sum_{j=1}^\infty C_{v,j}z^j$ where
$\sum_{j=1}^\infty \| C_{v,j} \| j^{a} < \infty$ for some $a>3/2$.
Furthermore $n_t :=\sum_{j=0}^{t-1} C_{n,j}x_{t-j,1}, t \in \mN,$ for a sequence 
$C_{n,j}$ such that $\sum_{j=0}^\infty \| C_{n,j} \| j^{a}< \infty, a>3/2$
and for $c_n(z)=\sum_{j=0}^\infty C_{n,j}z^j$ it holds that $\det c_n(1) \ne 0$.
Finally let
$Q_T := \sqrt{\log \log T/T}$.
Then
\begin{eqnarray*}
\| \la v_{t} , \ve_t \ra \| = O(Q_T) &,
& \| \la v_t , v_{t} \ra - \mE v_t v_{t}' \|=O(Q_T), \\
\la x_{t,1}, x_{t,1} \ra = O(T\log \log T) &, & \la x_{t,1}, \ve_t \ra = O(\log T), \\
\| \la x_{t,1}, v_{t} \ra \| = O(\log T) &, & \la x_{t,1} , x_{t,1} \ra^{-1} = O(Q_T^2).
\end{eqnarray*}
All expressions remain true if $x_{t,1}$ is replaced by $n_t$.

(II) Furthermore using  
$\Delta_{v,\Delta n} = \sum_{j=0}^\infty \mE v_j(\Delta n_0)'$ where 
$\Delta n_t = c_n(L)\ve_t, t \in \mZ$
we have:
\begin{eqnarray*}
\la v_t , n_t \ra  & \stackrel{d}{\to} & \int c_v(1) dW W'c_n(1)' + \Delta_{v,\Delta n}, \\
T^{-1}\la n_t, n_t \ra &  \stackrel{d}{\to} & c_n(1) \int WW' c_n(1)', \\
\mbox{vec}(T^{1/2} \la \ve_t, v_t \ra) & \stackrel{d}{\to} & {\cal N}(0,\mE v_t v_t' \otimes \mE \ve_t \ve_t')
\end{eqnarray*}
where $\mbox{vec}$ denotes column wise vectorization, $W(w)$ denotes the limiting
Brownian motion corresponding to $T^{-1/2}\sum_{j=1}^{\lfloor w T \rfloor} \ve_j$.
Finally ${\cal N}(0,V)$ denotes a Gaussian random variable with mean zero and variance $V$. 

(III) Let $x_t := [x_{t,\bullet}',x_{t,1}']'$ where $(x_{t,\bullet})_{t \in \mN}$ fulfills the same
restrictions as $(v_t)_{t \in \mN}$ under (I) and $(x_{t,1})_{t \in \mN}$ and $(n_t)_{t \in \mN}$
are integrated and of the same form as $(n_t)_{t \in \mN}$ under (I).
Further let $(v_{t})_{t \in \mN}$ and $(w_{t})_{t \in \mN}$ be two stationary processes
fulfilling the
assumption of $(v_t)_{t \in \mN}$ under (I). Let $(\ve_t)_{t \in \mZ}$ be as under (I). 
Let $^\pi$ denote the residuals of a regression onto $x_t$  and let $^\Pi$ denote the
corresponding limits (whenever the symbol is used the limit exists). 
Hence e.g. $v_t^\pi = v_t - \la v_t , x_t \ra \la x_t , x_t \ra^{-1} x_t$.
Then
\begin{eqnarray*}
\la \ve_t , v_t^\pi \ra  & = &  \la \ve_t , v_t^\Pi \ra + o(T^{-1/2}) = O(Q_T), \\ 
\la \ve_t , n_t^\pi \ra & = &  \la \ve_t , n_t \ra - \la \ve_t , x_{t,1} \ra \la x_{t,1}, x_{t,1} \ra^{-1} \la x_{t,1}, n_t \ra + o(1) = O(\log T(\log \log T)^2), \\
\la v_t^\pi , w_t^\pi \ra  & = &  \la v_t^\Pi , w_t^\Pi \ra + O(Q_T)= O(1)  ,  \\
\la v_t^\pi , n_t^\pi \ra  & = &  O(\log T(\log \log T)^2), \\
\la n_t^\pi , n_t^\pi \ra & = & \la n_t , n_t \ra   -   \la n_t , x_{t,1} \ra \la x_{t,1}, x_{t,1} \ra^{-1} \la x_{t,1}, n_t \ra + o(T) =O(T(\log \log T)).
\end{eqnarray*}

(IV) Let $d_t := [1,t]'$ and $D_d := \mbox{diag}(1,T^{-1})$.
For any process $(a_t)_{t \in \mN}$ let $\bar a_t$ denote the
detrended process $\bar a_t := a_t - \la a_t, d_t \ra \la d_t , d_t \ra^{-1}d_t$.
Let $v_t = \sum_{j=0}^\infty C_{v,j} \ve_{t-j}$ where $\sum_{j=0}^\infty j^2 \| C_{v,j} \| < \infty$.
Then $\| \la \bar v_t, \bar v_{t} \ra - \mE v_t v_{t}' \| = O_P(Q_T)$.
Further for $(x_t)_{t \in \mN}$ as in (I) it follows that $\la \bar \ve_t , \bar x_t \ra = O_P(1),
\la \bar v_{t},  \bar x_t \ra = O_P(1)$.
The same holds for replacing $x_t$ with $n_t := \sum_{j=0}^{t-1} C_{n,j} x_{t-j}$ if $\sum_{j=0}^\infty \| C_{n,j} \| j^3 < \infty$.

The following limit theorems hold:
\begin{eqnarray*}
\la \bar \ve_t, \bar x_t \ra  \stackrel{d}{\to}  \int d\bar W \bar W' & , &
T^{-1}\la \bar x_t , \bar x_t \ra  \stackrel{d}{\to}   \int \bar W \bar W'
\end{eqnarray*}
where $\bar W := W -  (\int W(s)ds)(4-6\omega) -  (\int sW(s) ds)(12\omega - 6)$
denotes the demeaned
and detrended Brownian motion associated with $(\ve_t)_{t \in \mN}$.\\
The analogous results also holds for
the demeaned series $\bar a_t := a_t - \la a_t, 1\ra$
where $\bar W := W -  (\int W(s)ds)$ appears in the asymptotic distributions.
\end{lemma}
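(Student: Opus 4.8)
The plan is to handle the four parts with a common toolkit — the martingale strong law and law of the iterated logarithm (LIL) for sums driven by $(\ve_t)_{t\in\mZ}$, the Beveridge--Nelson (BN) decomposition in the form of \cite{PhillipsSolo}, and the functional central limit theorem together with the continuous mapping theorem — and then to bootstrap the residualised quantities in (III) and (IV) from the raw moments in (I)--(II).

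For part (I) the crucial structural observation is that each linear process sums over strictly positive lags, $v_t=\sum_{i\ge1}C_{v,i}\ve_{t-i}$, and likewise $x_{t,1}=\sum_{j<t}\ve_j$ and $n_t$ are measurable with respect to $\mathcal F_{t-1}=\sigma(\ve_s:s\le t-1)$. Hence $\ve_t v_t'$ and $\ve_t x_{t,1}'$ form martingale difference arrays, so that $\|\la v_t,\ve_t\ra\|=O(Q_T)$ and $\la x_{t,1},\ve_t\ra=O(\lT)$ follow from the martingale LIL of \cite{LaiWei}: the conditional variance of $\sum_t x_{t,1}\ve_t'$ is of order $\sum_t\|x_{t,1}\|^2=O(T^2\llT)$ by the LIL for the random walk, which yields the stated rate after division by $T$. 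The centred sample covariance bound $\|\la v_t,v_t\ra-\mE v_tv_t'\|=O(Q_T)$ is again a martingale-LIL statement after a BN decomposition. The level bounds $\la x_{t,1},x_{t,1}\ra=O(T\llT)$ and $\|\la x_{t,1},v_t\ra\|=O(\lT)$ use $\max_t\|x_{t,1}\|^2=O(T\llT)$ from the LIL, and for the inverse $\la x_{t,1},x_{t,1}\ra^{-1}=O(Q_T^2)$ I would cite \cite{baunote} directly. Replacing $x_{t,1}$ by $n_t$ is immediate once $n_t$ is written, via BN, as $c_n(1)x_{t,1}$ plus an $\mathcal F_{t-1}$-measurable stationary remainder with summable coefficients, $c_n(1)$ being invertible.

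Part (II) is the distributional companion. From the FCLT $T^{-1/2}n_{\lfloor wT\rfloor}\Rightarrow c_n(1)W(w)$ the continuous mapping theorem gives $T^{-1}\la n_t,n_t\ra\stackrel{d}{\to}c_n(1)\int WW'c_n(1)'$. The mixed moment $\la v_t,n_t\ra$ converges to the stochastic integral $\int c_v(1)\,dW\,W'c_n(1)'$ plus the one-sided long-run covariance $\Delta_{v,\Delta n}$, which I would obtain from the convergence-of-stochastic-integrals results of \cite{PhillipsSolo} and \cite{Davidson}; here $\Delta_{v,\Delta n}$ is precisely the endogeneity bias generated by the BN cross terms. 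The Gaussian limit $\mv(T^{1/2}\la\ve_t,v_t\ra)\stackrel{d}{\to}\mathcal N(0,\mE v_tv_t'\otimes\Sigma)$ is a martingale central limit theorem for the martingale difference array $\ve_tv_t'$, whose conditional variance converges to $\mE v_tv_t'\otimes\Sigma$ by the ergodic theorem.

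Parts (III) and (IV) are then algebra driven by the rates of (I). For (III) I would substitute the projection identity $v_t^\pi=v_t-\la v_t,x_t\ra\la x_t,x_t\ra^{-1}x_t$ and expand $\la x_t,x_t\ra$ as a block matrix in the stationary block $x_{t,\bullet}$ and the integrated block $x_{t,1}$. The heart of the argument — and the main obstacle — is to show that this block system asymptotically decouples: the diagonal blocks are $O(1)$ and $O(T)$ while the off-diagonal block $\la x_{t,\bullet},x_{t,1}\ra$ is only $O(\lT)$, so that a block inversion, combined with $\la x_{t,1},x_{t,1}\ra^{-1}=O(Q_T^2)$, shows the projection onto the integrated regressor contributes vanishing coefficients to stationary residuals and that the cross contamination between the two blocks is of lower order. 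Carrying the heterogeneous rates $O(Q_T),O(\lT),O(T\llT),O(Q_T^2)$ through this inversion is where the bookkeeping must be done carefully, since a single mismatched power of $\lT$ or $\llT$ alters the claimed orders; once the decoupling is established each residualised moment reduces to its leading term and the stated bounds follow termwise. Part (IV) repeats (I)--(III) with the detrending projection onto $d_t=[1,t]'$: detrending is linear, so one replaces each process by its detrended version and invokes the FCLT with $\bar W$ in place of $W$, and the strengthened summability $\sum_j j^2\|C_{v,j}\|<\infty$ (respectively $\sum_j j^3\|C_{n,j}\|<\infty$) is exactly what is needed to control the deterministic trend regressor so that the centred second moments still converge at rate $O_P(Q_T)$ and the limiting stochastic integrals feature $\bar W$, the a.s. rates being replaced by in-probability rates.
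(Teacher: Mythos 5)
Your proposal matches the paper's own proof in all essentials: part (I) via strong laws / LIL-type results for martingales and linear processes (the paper cites Theorem 7.4.3 of Hannan--Deistler and Lai--Wei where you invoke the martingale LIL, which are interchangeable here) together with the Beveridge--Nelson split $n_t=c_n(1)x_{t,1}+n_t^*$ using $\det c_n(1)\neq0$; part (II) via the FCLT, continuous mapping and standard stochastic-integral convergence (the paper cites Park--Phillips Lemma 2.1) plus a martingale CLT; part (III) via block inversion of $\la x_t,x_t\ra$ with the off-diagonal block $O(\lT)$ and $\la x_{t,1},x_{t,1}\ra^{-1}=O(Q_T^2)$; and part (IV) via the linearity of detrending and Sims--Stock--Watson-type orders for $\la a_t,d_t\ra D_d$. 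No substantive gap.
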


\begin{proof} (I) $\la v_{t}, \ve_t \ra =O(Q_T)$ and $\la v_{t}, v_{t}\ra - \mE v_tv_{t}'=O(Q_T)$
follow from Theorem 7.4.3. of~\citeasnoun{HanDei}.
$\la x_t , x_t\ra = O(T\log \log T)$ follows from Theorem 3 of \citeasnoun{LaiWei83},
$\la x_t, \ve_t \ra = o(\log T)$ from Corollary 2 of ~\citeasnoun{LaiWei}.
Both results only deal with the univariate case but the extension to the multivariate
situation is obvious.
This result also implies $\la x_t, v_{t} \ra = O(\log T)$ (see \cite{baunote}, Lemma 4).
The same result applies for $n_t$ in place of $x_t$ by splitting
$n_t = c_n(1)x_t + n_t^*$ (Beveridge-Nelson decomposition, see e.g. \citename{PhillipsSolo}, \citeyear*{PhillipsSolo})
where
\begin{eqnarray*}
n_t^*= n_t - c_n(1)x_t
 & = & \sum_{j=0}^{t-1}C_{n,j}x_{t-j} - \sum_{j=0}^\infty C_{n,j} x_t
= (C_{n,0} - c_n(1))x_t + \sum_{j=1}^{t-1} C_{n,j} x_{t-j}\\
& = & (C_{n,0} - c_n(1))\ve_t + (C_{n,0} - c_n(1))x_{t-1} +
\sum_{j=1}^{t-1}C_{n,j}x_{t-j}  \\
& = & (C_{n,0} - c_n(1))\ve_t + (C_{n,0} + C_{n,1} - c_n(1))(\ve_{t-1} +x_{t-2}) + \sum_{j=2}^{t-1}C_{n,j}x_{t-j} \\
& = & \sum_{j=0}^{t-1} C_{n,j}^* \ve_{t-j}
\end{eqnarray*}
where $C_{n,i}^* := -\sum_{j=i+1}^{\infty} C_{n,j}$.
Due to the summability assumptions on $C_{n,j}$ the transfer function
$c_n^*(z):=\sum_{i=0}^\infty C_{n,i}^* z^i$ fulfills the properties
of Theorem~7.4.3. of \citeasnoun{HanDei}. The result then follows from the assumed non-singularity of 
$c_n(1)$.

The univariate version
of $\la x_t ,x_t \ra^{-1} = O(Q_T^2)$ is contained in ~\citeasnoun[p. 163]{LaiWei}.
The multivariate version is showed in \citeasnoun{baunote}. 

(II) Since $\sum_{t=1}^{\lfloor wT \rfloor} \ve_t /\sqrt{T} \Rightarrow W(w)$
 \cite[Theorem 27.17]{Davidson} the convergence of
$\la v_t, n_t \ra$ is e.g. given in ~\citeasnoun[Lemma 2.1. (e)]{ParkPhillips}.
The result for $T^{-1}\la n_t , n_t \ra$ is stated in part (c) of the same lemma.
The central limit theorem is standard \citeaffixed[Lemma 4.3.4.]{HanDei}{cf. e.g.}
since $(\ve_t)_{t \in \mN}$ is an ergodic square integrable
martingale difference sequence.

(III) The proof  is based on the block matrix inversion formula
\begin{equation} \label{equ:BMI}
\left[ \begin{array}{cc} A & B \\ C & D \end{array} \right]^{-1} =
\left[ \begin{array}{cc} A^{-1} & 0 \\ 0 & 0 \end{array} \right]
+ \left[ \begin{array}{c} - A^{-1} B \\ I \end{array} \right]
\left( D - CA^{-1}B \right)^{-1}
\left[ \begin{array}{cc} -CA^{-1}, I \end{array} \right]
\end{equation}
applied to $\la x_t, x_t \ra$. As an example consider
$$
\begin{array}{l}
\la \ve_t , v_t^\pi \ra = \la \ve_t, v_t \ra
- \la \ve_t , x_t \ra \la x_t, x_t \ra^{-1}\la x_t, v_t \ra \\
= \la \ve_t, v_t \ra
- \la \ve_t , x_{t,\bullet} \ra \la x_{t,\bullet}, x_{t,\bullet} \ra^{-1}\la x_{t,\bullet}, v_t \ra
- \la \ve_t , x_{t, \neg 1} \ra \la x_{t, \neg 1}, x_{t, \neg 1} \ra^{-1}\la x_{t, \neg 1}, v_t \ra
\end{array}
$$
where $x_{t, \neg 1} := x_{t,1} - \la x_{t,1} , x_{t,\bullet} \ra \la x_{t,\bullet}, x_{t,\bullet} \ra^{-1}x_{t,\bullet}$.
Therefore $\la x_{t, \neg 1} , x_{t, \neg 1}  \ra = \la x_{t,1}, x_{t,1} \ra - O(\log T)O(1)O(\log T)$
and $\la x_{t, \neg 1}, v_t \ra = \la x_{t,1} , v_t \ra - O(\log T) = O(\log T)$ by (I).
Thus
$$
\begin{array}{ccc}
\la \ve_t , v_t^\pi \ra & = & \la \ve_t, v_t \ra
- \la \ve_t , x_{t,\bullet} \ra \la x_{t,\bullet}, x_{t,\bullet} \ra^{-1}
\la x_{t,\bullet}, v_t \ra + o((\log T)^3/T) \\
& = & \la \ve_t, v_t \ra
- \la \ve_t , x_{t,\bullet} \ra
(\mE x_{t,\bullet} x_{t,\bullet}')^{-1} \mE x_{t,\bullet} v_t' + o(T^{-1/2})
\end{array}
$$
since $\la \ve_t , x_{t, \bullet} \ra = O(Q_T)$
and $(\mE x_{t,\bullet} x_{t,\bullet}')^{-1} \mE x_{t,\bullet} v_t'-\la x_{t,\bullet}, x_{t,\bullet} \ra^{-1}
\la x_{t,\bullet}, v_t \ra = O(Q_T)$
as required.
For $\la v_t^\pi, w_t^\pi \ra = \la v_t, w_t^\pi \ra$ the same arguments apply
with the exception that now $\la v_t, w_t \ra =O(1)$ rather than $O(Q_T)$ leading to the
second claim.
The other claims follow in a similar manner from the bounds achieved under (I). 

(IV)
Only the results for detrending are shown, the analogous statements
for the demeaned series are obvious from the given results.
The derivations here use Lemma 1, p. 121 of \citeasnoun{SimsStockWatson}.
Lemma 1 (g) shows that
$\la v_t, d_t \ra D_d = O_P(T^{-1/2})$ for stationary $(v_t)_{t \in \mN}$
and $\la x_t, d_t \ra D_d = O_P(T^{1/2})$ for integrated $(x_t)_{t \in \mN}$.
\\
For $\la \bar \ve_t , \bar x_t \ra = \la \ve_t, x_t \ra - \la \ve_t, d_t
\ra \la d_t, d_t \ra^{-1} \la d_t, x_t \ra$
note that the first term converges in distribution according to (II). For the second term
note that $\sqrt{T}\la \ve_t, d_t \ra D_d \stackrel{d}{\to} [ \int dW, \int \omega dW ],
D_d \la d_t, d_t \ra D_d$ converges to a constant nonsingular matrix and
$T^{-1/2}D_d \la d_t, x_t \ra \stackrel{d}{\to}
[ \int W, \int \omega W ]'$. The last two statements follow
from Lemma 1 (a) and (c) of \citeasnoun{SimsStockWatson}.
Therefore
$$
\la \bar \ve_t , \bar x_t \ra \stackrel{d}{\to} \int dW W' - \left[ \int dW, \int \omega dW \right]
\left[ \begin{array}{cc} \int 1 & \int \omega \\ \int \omega & \int \omega^2 \end{array} \right]^{-1}
\left[ \begin{array}{c} \int W' \\ \int \omega W' \end{array} \right].
$$
This shows that the Brownian motion in the limiting expression is demeaned and detrended.

If $\ve_t$ is replaced by $v_t$
convergence in distribution still holds, but the limits change.

The evaluations for $T^{-1}\la \bar x_t, \bar x_t \ra$ follow the same lines and are omitted.

Decomposing $n_t = c_n(1)x_t + n_t^*$ as above shows that in the above calculations $x_t$ can
be replaced with $n_t$ without changing the orders of convergence. 

Finally if the time trend is omitted and only demeaning is performed the results can be shown analogously using the arguments given above. 
\end{proof}

\begin{lemma} \label{lem:omegadelta}
Under the assumptions of  Theorem~\ref{thm:FM} the following holds true:
\begin{eqnarray*}
\hat \Omega_{\tilde u,\Delta z} \hat \Omega_{\Delta z, \Delta z}^{-1} & = &
\left[ \begin{array}{cc} \Omega_{u, \Delta z}^{:,n} (\Omega_{\Delta z, \Delta z}^{n,n})^{-1} + o_P(1) & O_P(1) \end{array} \right],\\
\la \Delta \check z_t, \check z_t \ra - \hat \Delta_{\Delta z, \Delta z} & = &
\left[ \begin{array}{cc}
\int dB [ \left[ \begin{array}{c} \tilde c_{z,1:2}(1) \\ \tilde c_{z,u}(1) \end{array} \right]      W]'+ o_P(1) & O_P(K^{-2}) + O_P(1/\sqrt{KT}) \\
T^{-1} \check z_{T,2}\check z_{T,1}' + O_P(K^{-2})+ O_P(1/\sqrt{TK}) & O_P(K^{-2}) \end{array} \right],\\
\hat \Delta_{\tilde u, \Delta z} & = &
\left[ \begin{array}{cc} \Delta_{u,\Delta z}^{:,n} + O_P((K/T)^{1/2}) & O_P(1/\sqrt{KT}) \end{array} \right],\\
(\la \Delta \check z_t, \tilde u_t \ra - \tilde \Delta_{\Delta z, \Delta u}) & = &
\left[ \begin{array}{c} O_P(K^{-2}) + O_P(1/\sqrt{KT}) \\ O_P(K^{-2}) \end{array}\right]
\end{eqnarray*}
where $B= $ 

Here the notation refers to the transformed vectors $\check z_t = [(\tilde z_{t,1})',(\tilde z_{t,2})',(\tilde z_{t,1}^u)',(\tilde z_{t,3})',(\tilde z_{t,2}^u)']'$ %(formed using both $z_{t}^r$ and $z_t^u$) 
where the nonstationary components of both vectors (first block row) and the stationary components (second block row)
of these vectors are separated. $K$ denotes the kernel bandwidth parameter (see Assumptions K).
Furthermore $\Delta_{w,\Delta v} = \mE w_tv_t'$ for stationary processes $(v_t)_{t \in \mZ}$ and $(w_t)_{t \in \mZ}$.
\end{lemma}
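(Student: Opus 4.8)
The plan is to work entirely in the transformed coordinates $\check z_t$ of the lemma, in which the nonstationary (integrated) and stationary components are separated, and to establish each of the four claimed identities block by block. A preliminary reduction is to replace the estimation residuals $\hat u_t = y_t - \hat\beta_{OLS}z_t$ by the true noise $\Lambda \ve_t$: since $\hat \beta_{OLS} - b = O(\sqrt{\llT/T})$ and the integrated regressors are of order at worst $O(\sqrt{T\llT})$ (Lemma~\ref{lem:bound}(I)), the contribution of $\hat\beta_{OLS}-b$ to any of the kernel sums is of smaller order than the stated remainders, so that $\tilde u_t$ may be treated as $\T_y \Lambda \ve_t$ up to negligible terms. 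The whole argument then rests on two families of facts: standard kernel (HAC) estimation theory for pairs of stationary processes, and summation-by-parts estimates that exploit the smoothness of the kernel.

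For the blocks in which both arguments are (asymptotically) stationary processes with nonvanishing long-run covariance --- i.e. the nonstationary-direction columns of $\hat\Omega_{\tilde u,\Delta z}$, $\hat\Delta_{\tilde u,\Delta z}$, and the $(n,n)$ block of $\hat\Omega_{\Delta z,\Delta z}$ --- I would invoke the standard bias/variance decomposition of the kernel estimator. Assumption~K gives $w(0)=1$, $w'(0)=0$, $w''(0)\ne 0$, so the kernel is of characteristic exponent two and the smoothing bias is $O(K^{-2})$, while the stochastic part contributes a standard deviation of order $(K/T)^{1/2}$. With $K \propto c_T T^b$, $b \in (1/4,2/3)$, the variance term dominates, giving $\hat\Omega_{\Delta z,\Delta z}^{n,n} \stackrel{p}{\to} \Omega_{\Delta z,\Delta z}^{n,n}$ and $\hat\Delta_{\tilde u,\Delta z}$ equal to $\Delta_{u,\Delta z}^{:,n}$ up to $O_P((K/T)^{1/2})$ in the nonstationary-direction block, as claimed. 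These are essentially the results of \cite{Phillips95} specialised to the present coordinates.

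The crucial and most delicate point is the behaviour in the stationary directions, where the corresponding entries of $\Delta\check z_t$ are over-differenced: writing such a component as $\Delta s_t = s_t - s_{t-1}$ with $s_t$ stationary, its long-run covariance with any process vanishes, so the naive variance rate overstates the true order. Here I would apply summation by parts to the kernel-weighted sums, using $\la \Delta s_t, b_{t-j}\ra = \hat\Gamma_{s,b}(j) - \hat\Gamma_{s,b}(j-1)$ (up to boundary terms) to transfer the difference onto the kernel weights. Since $w$ is twice continuously differentiable with compact support, $w(j/K) - w((j+1)/K) = O(1/K)$, and the resulting weighted sum of $O(K)$ sample autocovariances, each fluctuating at order $T^{-1/2}$, has standard deviation of order $(KT)^{-1/2}$; the remaining deterministic part contributes the $O(K^{-2})$ bias. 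This produces the $O_P(K^{-2}) + O_P(1/\sqrt{KT})$ entries throughout. Combined with the fact that the stationary-direction blocks of $\hat\Omega_{\Delta z,\Delta z}$ are therefore of small order while its $(n,n)$ block is nonsingular in the limit, the block-matrix inversion formula~\eqref{equ:BMI} yields the advertised structure of $\hat\Omega_{\tilde u,\Delta z}\hat\Omega_{\Delta z,\Delta z}^{-1}$, with a genuine limit in the first block column and only $O_P(1)$ in the second.

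Finally, for $\la \Delta\check z_t,\check z_t\ra - \hat\Delta_{\Delta z,\Delta z}$ the nonstationary-by-integrated block is handled by the functional central limit theorem of Lemma~\ref{lem:bound}(II): the cross moment of a stationary difference with an integrated level converges to $\int dB\,(\cdots)' + \Delta_{v,\Delta n}$, and since $\hat\Delta_{\Delta z,\Delta z}$ consistently estimates the one-sided long-run covariance $\Delta_{v,\Delta n}$ in this block, the subtraction leaves precisely the stochastic integral; the off-diagonal $(2,1)$ block produces the boundary contribution $T^{-1}\check z_{T,2}\check z_{T,1}'$ through the same summation-by-parts step. Assembling the four block computations gives the statement. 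I expect the over-differenced summation-by-parts analysis --- keeping track of the boundary terms and verifying that the sharper rates $O_P(K^{-2})$ and $O_P(1/\sqrt{KT})$ are attained uniformly --- to be the main obstacle; the stationary-block HAC asymptotics and the FCLT steps are comparatively routine.
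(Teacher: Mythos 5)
The paper does not really prove this lemma at all: its entire ``proof'' is the sentence that all but the last display can be found in Lemma 8.1 of \cite{Phillips95}, plus the remark that the last display follows from the infinite-sum representation of the one-sided long-run covariance. Your proposal therefore supplies far more than the paper records, and what you reconstruct is, in substance, exactly the content of the cited Phillips lemma: the reduction from $\hat u_t$ to $\Lambda\ve_t$ via the a.s.\ rate for $\hat\beta_{OLS}-b$; the characteristic-exponent-two bias $O(K^{-2})$ coming from $w'(0)=0$, $w''(0)\ne 0$ together with the $(K/T)^{1/2}$ stochastic term in the blocks where both arguments have nonvanishing long-run covariance; the summation-by-parts transfer of the difference operator onto the kernel weights in the over-differenced (stationary) directions, which is what produces the $O_P(1/\sqrt{KT})$ rates and the boundary term $T^{-1}\check z_{T,2}\check z_{T,1}'$; and the FCLT for the $\int dB\,(\cdot)'$ block. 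The one point where your sketch is thinner than the conclusion requires is the first display: to get $O_P(1)$ (rather than divergence) in the second block of $\hat\Omega_{\tilde u,\Delta z}\hat\Omega_{\Delta z,\Delta z}^{-1}$ you must check that the stationary-direction columns of the numerator and the stationary diagonal block of $\hat\Omega_{\Delta z,\Delta z}$ vanish at matching rates, with the latter bounded \emph{below} at that rate; the block-inversion formula alone does not deliver this, and it is the only genuinely nontrivial step hidden behind your appeal to it. Since the paper itself gives no detail, your account is consistent with the cited source and, modulo that rate-matching verification, is the more complete of the two.
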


The proof of all but the last of these facts can be found in \citeasnoun[Lemma 8.1]{Phillips95}.
The last fact can be easily derived from the infinite sum representation of $\Delta_{z,\Delta v}$.

\begin{lemma} \label{lem:dKp}
Let $b_{r} = \Of \Kp'$ where $\Kp'S_p = I_n$.
Here $S_p  \in \mR^{m_r \times n}$ denotes a selector matrix
(i.e. a matrix composed of $n$ columns of $I_{m_r}$).
Let $\tilde \beta_{r}$ denote an estimator of
$b_{r}$ such that $\| \tilde \beta_{r} - b_{r}\|_{Fr} = o(a_T)$.
\\
Assume that $\| \hWf - \Wf \|_{Fr} = o(T^{-\epsilon})$ and $\| \hWp - \Wp \|_{Fr} = o(T^{-\epsilon})$ ($\Wf$ and $\Wp$ being nonsingular) for some $\epsilon>0$ and let 
$\hWf \hOf \hKp \hWp$ be obtained as the best (in Frobenius norm) rank $n$ approximation of $\hWf \tilde \beta_r \hWp$.   
Further let $\Of^\dagger := (\Of' (\Wf)^2 \Of)^{-1} \Of'(\Wf)^2$ assuming that
$ \| (\Of'(\Wf)^2  \Of)^{-1} \| < \infty$.

Then for $T$ large enough $\hKp$ can a.s. be chosen such that $\hKp'S_p = I_n$.
Further
\begin{equation} \label{eq:DKP}
\hKp' - \Kp' = \Of^\dagger (\tilde \beta_{r} - b_{r}) (I_p - S_p\Kp') + o(a_T).
\end{equation}
\end{lemma}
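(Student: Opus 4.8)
The plan is to exploit that, once the normalisation $\hKp'S_p=I_n$ is imposed, the right factor $\hKp'$ becomes a function of the rank-$n$ matrix $\hat b_r:=\hOf\hKp'$ alone: from $\hat b_r S_p=\hOf\hKp'S_p=\hOf$ we recover $\hOf=\hat b_r S_p$, and since $\hOf$ has full column rank the factorisation $\hat b_r=\hOf\hKp'$ determines $\hKp'=(\hOf'\hOf)^{-1}\hOf'\hat b_r$. I would then linearise two maps in succession: first $\tilde\beta_r\mapsto\hat b_r$ (the weighted best rank-$n$ approximation), then $\hat b_r\mapsto\hKp'$ (the normalised right factor). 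Throughout write $\delta:=\tilde\beta_r-b_r$ (so $\|\delta\|_{Fr}=o(a_T)$), $\eta:=\hat b_r-b_r$, $P:=I_p-S_p\Kp'$ and $\Of^{+}:=(\Of'\Of)^{-1}\Of'$ for the ordinary left inverse; note that $\Kp'P=\Kp'-\Kp'S_p\Kp'=0$ by $\Kp'S_p=I_n$.

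\textbf{Normalisation.} Since $\tilde\beta_r\to b_r$ and $\hWf\to\Wf$, $\hWp\to\Wp$, continuity of the (weighted) best rank-$n$ approximation at the point $b_r$, guaranteed by the spectral gap of $\Wf b_r\Wp$ discussed below, gives $\hat b_r\to b_r$ a.s. As $b_r S_p=\Of\Kp'S_p=\Of$ has full column rank $n$ and full rank is an open condition, $\hOf:=\hat b_r S_p$ has rank $n$ for $T$ large, a.s.; taking $\hKp':=(\hOf'\hOf)^{-1}\hOf'\hat b_r$ then yields $\hKp'S_p=(\hOf'\hOf)^{-1}\hOf'\hOf=I_n$, which proves the first assertion.

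\textbf{Linearising the approximation.} Denote by $[M]_n$ the best rank-$n$ approximation of $M$ in Frobenius norm and set $F(\beta;A,B):=A^{-1}[A\beta B]_n B^{-1}$, so $\hat b_r=F(\tilde\beta_r;\hWf,\hWp)$. The decisive point is that $F(b_r;A,B)=b_r$ for \emph{every} pair of nonsingular $A,B$, because $A b_r B$ has rank exactly $n$ and hence equals its own best rank-$n$ approximation. Consequently the weight errors cannot enter at first order: since $F(b_r;\hWf,\hWp)=b_r$,
\[
\eta=F(\tilde\beta_r;\hWf,\hWp)-F(b_r;\hWf,\hWp).
\]
As $\Wf b_r\Wp$ has $n$ singular values bounded away from zero and a vanishing tail, near it the truncation $[\cdot]_n$ is differentiable (perturbation of singular subspaces, cf.\ \cite{Chatelin}), with derivative $D[Y_0]_n(E)=E-(I-P_U)E(I-P_V)$ at a rank-$n$ point $Y_0$, where $P_U,P_V$ are the orthogonal projections onto the column and row spaces of $Y_0=\Wf b_r\Wp$. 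Expanding $F$ in its first argument and replacing the weights by their limits (admissible, as the resulting change is $o(1)$ times $\|\delta\|_{Fr}=o(a_T)$) gives $\eta=\Wf^{-1}D[Y_0]_n(\Wf\delta\Wp)\Wp^{-1}+o(a_T)$. Using the oblique projections $\Wf^{-1}P_U\Wf=\Of\Of^\dagger$ and $\Pi_V:=\Wp P_V\Wp^{-1}=(\Wp)^2\Kp(\Kp'(\Wp)^2\Kp)^{-1}\Kp'$ (for the symmetric square-root weights used here), this becomes
\[
\eta=\Of\Of^\dagger\,\delta+(I-\Of\Of^\dagger)\,\delta\,\Pi_V+o(a_T).
\]

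\textbf{Combining, and the main obstacle.} With $\hOf=\Of+\eta S_p$ the standard first-order expansion of the pseudoinverse, together with $(I-\Of\Of^{+})b_r=(I-\Of\Of^{+})\Of\Kp'=0$, gives $\hKp'-\Kp'=\Of^{+}\eta\,(I-S_p\Kp')+o(a_T)=\Of^{+}\eta P+o(a_T)$. Substituting the expression for $\eta$ and using $\Of^{+}\Of=I_n$ together with $\Pi_V P=0$ (immediate from $\Kp'P=0$, since $\Pi_V$ ends in $\Kp'$) collapses the two summands to $\Of^\dagger\delta P$, whence
\[
\hKp'-\Kp'=\Of^\dagger(\tilde\beta_r-b_r)(I_p-S_p\Kp')+o(a_T),
\]
as claimed; note that the \emph{weighted} inverse $\Of^\dagger$ survives precisely because $\Of^{+}\Of\Of^\dagger=\Of^\dagger$, while the row-space term is killed by the normalisation. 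The step I expect to be most delicate is establishing the differentiability of $F$ in its first argument with an \emph{almost sure} $o(a_T)$ remainder: this needs a singular-subspace perturbation bound (the spectral gap at $\Wf b_r\Wp$ being essential) uniform over the a.s.\ event $\{\|\delta\|_{Fr}=o(a_T)\}$, the remainder then being quadratic in $\delta$ and hence $o(a_T)$. The identity $F(b_r;A,B)\equiv b_r$ is exactly what confines the first-order term to the $\delta$-direction and removes any first-order contribution of the weight errors, so that no rate on $\hWf-\Wf$, $\hWp-\Wp$ beyond convergence is required.
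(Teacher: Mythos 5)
Your proof is correct, but it takes a genuinely different route from the paper's. The paper never linearises the rank-$n$ truncation: it uses the exact algebraic identity $\hKp' = \hOf^\dagger \tilde\beta_{r}$ (with $\hOf^\dagger := (\hOf'(\hWf)^2\hOf)^{-1}\hOf'(\hWf)^2$), which holds because the SVD residual is annihilated by the leading left singular vectors, and then obtains the claim in two lines from the split $\hKp'-\Kp' = (\hOf^\dagger-\Of^\dagger)\tilde\beta_{r} + \Of^\dagger(\tilde\beta_{r}-b_{r})$ combined with $(\hKp'-\Kp')S_p=0$ and $b_{r}(I_p-S_p\Kp')=0$; the first summand is $o(1)\cdot o(a_T)$. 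You instead differentiate the weighted best rank-$n$ approximation map at the rank-$n$ point $\Wf b_r \Wp$ and then the normalised factorisation, which forces you through the singular-subspace perturbation expansion with its quadratic remainder and spectral-gap argument --- exactly the step you flag as delicate, and exactly what the paper's identity renders unnecessary. What your route buys in exchange is an explicit first-order expression for $\hat b_r - b_r$ itself, namely $\Of\Of^\dagger\delta + (I-\Of\Of^\dagger)\delta\,\Pi_V$, and the clean observation that $F(b_r;A,B)\equiv b_r$ removes any first-order contribution of the weight errors, so that only consistency of $\hWf,\hWp$ is really used (though the paper's final step likewise only needs $\hOf^\dagger-\Of^\dagger=o(1)$). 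Your cancellation $\Pi_V(I_p-S_p\Kp')=0$, a consequence of $\Kp'S_p=I_n$, plays precisely the role of the paper's insertion of the projector $(I_p-S_p\Kp')$ via $(\hKp'-\Kp')S_p=0$. Both arguments share the same normalisation step at the outset and arrive at the identical formula.
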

\begin{proof}
Since $\| \tilde \beta_{r} - b_{r}\|_{Fr} = o(a_T)$ it follows from the
boundedness assumption on $\Of^\dagger$ that $\Of^\dagger \tilde \beta_{r}S_p \to \Of^\dagger \beta_{r}S_p=I_n$.
Since $\hOf \hat \Kp'$ is a best approximation to $b_{r}$ based
on $\tilde \beta_{r}$ in a weighted least squares sense it follows
that
$$
\| \hOf \hKp' - b_{r} \|_{Fr} \le
\| \hOf \hKp' - \tilde \beta_{r} \|_{Fr} + \| \tilde \beta_{r} - b_{r} \|_{Fr}
= O(\| \tilde \beta_{r} - b_{r} \|_{Fr})
=o(1).
$$

It follows that $\Of^\dagger \hOf \hKp' S_p \to \Of^\dagger b_{r}S_p =I_n$.
Therefore $\hKp$ subject to
the restriction $\hKp'S_p=I_n$ is well defined a.s. for $T$ large enough.
It follows that
$\| (\hOf\hKp' - b_{r})S_p \|_{Fr} =\| \hOf -\Of \|_{Fr} = o(1)$.
Letting $\hOf^\dagger := (\hOf' (\hWf)^2 \hOf)^{-1} \hOf' (\hWf)^2$
one obtains
$$
\hKp' - \Kp' = \hOf^\dagger \tilde \beta_{r} - \Of^\dagger b_{r} =
(\hOf^\dagger- \Of^\dagger) \tilde \beta_{r} + \Of^\dagger( \tilde \beta_{r} - b_{r}).
$$

Then $\| \hOf - \Of\|  = o(1)$ and $\| \hWf -  \Wf \| =o(T^{-\epsilon})$ together with the
bounds on the norms
$\| (\Of' (\Wf)^2 \Of)^{-1}\|, \| \Wf \|$ and $\| \Of \|_{Fr}$
imply $\| \Of^\dagger - \hOf^\dagger\|_{Fr} = o(1)$.
Using the fact that $(\hKp'-G')S_p=0$
and $b_{r}(I-S_pG')=0$
one obtains
$$
\hKp' - \Kp' = \Of^\dagger (\tilde \beta_{r} - b_{r})(I-S_pG') +
(\hOf^\dagger - \Of^\dagger) (\tilde \beta_{r} - b_{r})(I-S_pG')
$$
which proves the lemma.
\end{proof}
\begin{lemma} \label{lem:block}
Let $A_T=A_T' = A_0 + \delta^A, A_0 = A_0'$ and $B_T = B_0 + \delta^B$ be two sequences of matrices
$A_T \in \mR^{a \times a}$ and $B_T \in \mR^{a \times b}$.
$A_0$ and $B_0$ are possibly random matrices.
Assume that all matrices are partitioned as
\begin{eqnarray*}
A_T & = & \left[ \begin{array}{cc} A_{T,11} & A_{T,12} \\ A_{T,21} & A_{T,22}  \end{array} \right]=
\left[ \begin{array}{cc} A_{0,11}+ \delta_{11}^A & \delta_{12}^A \\ \delta_{21}^A & A_{0,22} + \delta_{22}^A  \end{array} \right]=
\left[ \begin{array}{cc} A_{0,11}+ O_P(a_T^2) & O_P(a_T) \\ O_P(a_T) & A_{0,22} + O_P(a_T)  \end{array} \right],\\
B_T & = & \left[ \begin{array}{cc} B_{T,11} & B_{T,12} \\ B_{T,21} & B_{T,22}  \end{array} \right]=
\left[ \begin{array}{cc} B_{0,11}+ \delta_{11}^B & \delta_{12}^B \\ \delta_{21}^B & B_{0,22} + \delta_{22}^B  \end{array} \right]=
\left[ \begin{array}{cc} B_{0,11}+O_P(b_T^2) & O_P(b_T) \\ O_P(b_T) & B_{0,22}+ O_P(b_T)  \end{array} \right],
\end{eqnarray*}

such that $A_{T,11} \in \mR^{c \times c}, B_{T,11} \in \mR^{c \times c}$ and all
other matrices have the corresponding dimensions. 
The subscripts for all matrices indicate
the corresponding blocks.
Assume that $A_{0,11}^{-1} = O_P(1), A_{0,22}^{-1} = O_P(1)$.
Finally let $J_T := B_T'(A_T)^{-1} B_T - B_0' (A_0)^{-1} B_0$. 

Then if $a_T$ and $b_T$ are such that $a_T \to 0, b_T \to 0$ we have
\begin{eqnarray*}
J_{T,11} & = & (\delta_{11}^B)'A_{0,11}^{-1} B_{0,11} + B_{0,11}'A_{0,11}^{-1} \delta_{11}^B - B_{0,11}'A_{0,11}^{-1}\delta_{11}^A A_{0,11}^{-1}B_{0,11}\\
 & & + \left[ (\delta_{21}^B)' - B_{0,11}'A_{0,11}^{-1}\delta_{12}^A \right]
 A_{0,22}^{-1} \left[ \delta_{21}^B - \delta_{21}^A A_{0,11}^{-1}B_{0,11}\right] +  o_P(a_T^2+b_T^2) \\
J_{T,21} & = & (\delta_{12}^B)'A_{0,11}^{-1} B_{0,11}
+ \left[ B_{0,22} + \delta_{22}^B \right]' A_{0,22}^{-1}\left[ \delta_{21}^B - \delta_{21}^A A_{0,11}^{-1}B_{0,11}\right] \\
& & - B_{0,22}'A_{0,22}^{-1}\delta_{22}^A A_{0,22}^{-1} \left[ \delta_{21}^B - \delta_{21}^A A_{0,11}^{-1}B_{0,11}\right]+ o_P(a_T^2+ b_T^2), \\
J_{T,22} & = & (\delta_{22}^B - \delta_{21}^A A_{0,11}^{-1} \delta_{12}^B)'\left( A_{0,22}^{-1} - A_{0,22}^{-1}(\delta_{22}^A - \delta_{21}^A A_{0,11}^{-1}\delta_{12}^A - \delta_{22}^A A_{0,22}^{-1} \delta_{22}^A)A_{0,22}^{-1}\right)B_{0,22} \\
& & + B_{0,22}'\left( A_{0,22}^{-1} - A_{0,22}^{-1}(\delta_{22}^A - \delta_{21}^A A_{0,11}^{-1}\delta_{12}^A - \delta_{22}^A A_{0,22}^{-1} \delta_{22}^A)A_{0,22}^{-1}\right)(\delta_{22}^B -  \delta_{21}^A A_{0,11}^{-1} \delta_{12}^B)\\
& & - B_{0,22}'\left( A_{0,22}^{-1}(\delta_{22}^A - \delta_{21}^A A_{0,11}^{-1}\delta_{12}^A - \delta_{22}^A A_{0,22}^{-1} \delta_{22}^A)A_{0,22}^{-1}\right) B_{0,22} \\
& &  + (\delta_{22}^B - \delta_{21}^A A_{0,11}^{-1} \delta_{12}^B)'A_{0,22}^{-1} (\delta_{22}^B -  \delta_{21}^A A_{0,11}^{-1} \delta_{12}^B)
+ (\delta_{12}^B)'A_{0,11}^{-1} \delta_{12}^B
+ o_P(a_T^2 + b_T^2).
\end{eqnarray*}
Therefore $J_{T,11} = O_P(a_T^2+b_T^2)$ and $J_{T,12}=O_P(a_T+b_T), J_{T,22} = O_P(a_T+b_T)$. All evaluations
hold if all in probability statements are exchanged by almost sure convergence.
\end{lemma}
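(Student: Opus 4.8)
The plan is to compute $A_T^{-1}$ explicitly via the block matrix inversion formula~\eqref{equ:BMI} and then to substitute $B_T = B_0 + \delta^B$ into $J_T = B_T'(A_T)^{-1}B_T - B_0'(A_0)^{-1}B_0$, collecting terms block by block up to order $o_P(a_T^2+b_T^2)$. Since $A_{0,11}^{-1}=O_P(1)$ and $\delta_{11}^A=O_P(a_T^2)\to 0$, the block $A_{T,11}=A_{0,11}+\delta_{11}^A$ is invertible for $T$ large enough and admits the Neumann expansion $A_{T,11}^{-1}=A_{0,11}^{-1}-A_{0,11}^{-1}\delta_{11}^A A_{0,11}^{-1}+O_P(a_T^4)$; in particular $A_{T,11}^{-1}=A_{0,11}^{-1}+O_P(a_T^2)$.

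First I would expand the Schur complement $S_T:=A_{T,22}-A_{T,21}A_{T,11}^{-1}A_{T,12}=A_{0,22}+\delta_{22}^A-\delta_{21}^A A_{0,11}^{-1}\delta_{12}^A+O_P(a_T^4)$, using $\delta_{12}^A,\delta_{21}^A=O_P(a_T)$ together with $A_{T,11}^{-1}=A_{0,11}^{-1}+O_P(a_T^2)$. Because $\delta_{22}^A$ is only $O_P(a_T)$, its inverse must be carried to second order: with $A_{0,22}^{-1}=O_P(1)$ one obtains
$$S_T^{-1}=A_{0,22}^{-1}-A_{0,22}^{-1}\bigl(\delta_{22}^A-\delta_{21}^A A_{0,11}^{-1}\delta_{12}^A-\delta_{22}^A A_{0,22}^{-1}\delta_{22}^A\bigr)A_{0,22}^{-1}+o_P(a_T^2),$$
which is precisely the bracketed factor appearing in the stated $J_{T,22}$. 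Substituting these expansions into~\eqref{equ:BMI} yields the four blocks of $A_T^{-1}$ to the required accuracy: the off-diagonal block $\Phi_{12}=-A_{T,11}^{-1}A_{T,12}S_T^{-1}=-A_{0,11}^{-1}\delta_{12}^A A_{0,22}^{-1}+O_P(a_T^2)$ with $\Phi_{21}=\Phi_{12}'$ by symmetry of $A_T^{-1}$, the block $\Phi_{11}=A_{0,11}^{-1}-A_{0,11}^{-1}\delta_{11}^A A_{0,11}^{-1}+A_{0,11}^{-1}\delta_{12}^A A_{0,22}^{-1}\delta_{21}^A A_{0,11}^{-1}+o_P(a_T^2)$, and $\Phi_{22}=S_T^{-1}$ as above.

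Then I would insert $B_T=B_0+\delta^B$ (recall $B_{0,12}=B_{0,21}=0$, with $B_{0,11},B_{0,22}=O_P(1)$, $\delta_{11}^B=O_P(b_T^2)$ and $\delta_{12}^B,\delta_{21}^B=O_P(b_T)$) and read off each block of $J_T$. The bookkeeping is dictated entirely by the heterogeneous orders. In the $(1,1)$ block every genuine contribution is already $O_P(a_T^2+b_T^2)$, so only the leading cross-terms survive; the quadratic-in-$\delta^A$ term carried inside $\Phi_{11}$, sandwiched by $B_{0,11}$, combines with the $\Phi_{12},\Phi_{21},\Phi_{22}$ contributions involving $\delta_{21}^B$ to reproduce exactly the factored expression $[(\delta_{21}^B)'-B_{0,11}'A_{0,11}^{-1}\delta_{12}^A]A_{0,22}^{-1}[\delta_{21}^B-\delta_{21}^A A_{0,11}^{-1}B_{0,11}]$ in the displayed $J_{T,11}$. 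In the $(2,1)$ and $(2,2)$ blocks the leading terms are $O_P(a_T+b_T)$, and one must additionally retain all second-order products of order $O_P(a_T^2+a_T b_T+b_T^2)$; using $\Phi_{21}=-A_{0,22}^{-1}\delta_{21}^A A_{0,11}^{-1}+O_P(a_T^2)$ and the second-order form of $\Phi_{22}$ these reproduce the stated $J_{T,21}$ and $J_{T,22}$. The orders $J_{T,11}=O_P(a_T^2+b_T^2)$ and $J_{T,12}=J_{T,22}=O_P(a_T+b_T)$ then follow by inspection.

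The main obstacle is purely organizational rather than conceptual: one must keep the second-order terms in $S_T^{-1}$ (in particular the $\delta_{22}^A A_{0,22}^{-1}\delta_{22}^A$ contribution, which is $O_P(a_T^2)$ and hence not negligible) while discarding the many products that are genuinely $o_P(a_T^2+b_T^2)$, and this must be done separately in each block, since the $(1,1)$ perturbations enter only at the squared rates $a_T^2,b_T^2$ whereas the remaining perturbations enter at rates $a_T,b_T$. Since every step is an algebraic identity combined with order bookkeeping, and the underlying rates ($A_{0,11}^{-1},A_{0,22}^{-1}=O_P(1)$ and $a_T,b_T\to 0$) hold verbatim in the almost sure sense, the concluding claim that all statements remain valid with $O_P,o_P$ replaced by $O,o$ requires no further argument.
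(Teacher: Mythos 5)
Your proposal is correct and follows essentially the same route as the paper: block matrix inversion of $A_T$, the Neumann expansion $A_{T,11}^{-1}=A_{0,11}^{-1}-A_{0,11}^{-1}\delta_{11}^A A_{0,11}^{-1}+o_P(a_T^2)$, and the second-order expansion of the Schur complement $S_T^{-1}=A_{0,22}^{-1}-A_{0,22}^{-1}(\delta_{22}^A-\delta_{21}^A A_{0,11}^{-1}\delta_{12}^A-\delta_{22}^A A_{0,22}^{-1}\delta_{22}^A)A_{0,22}^{-1}+o_P(a_T^2)$, followed by block-wise bookkeeping. The paper likewise declares the remaining collection of terms ``tedious but straightforward,'' so your additional detail on which second-order products survive in each block is, if anything, more explicit than the published argument.
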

\begin{proof}
The proof follows from straightforward algebraic manipulations using the block matrix inversion
\begin{equation}
A_T^{-1} = \left[ \begin{array}{cc} A_{T,11}^{-1} & 0 \\ 0 & 0 \end{array} \right]
+ \left[ \begin{array}{c} - A_{T,11}^{-1} A_{T,12} \\ I \end{array} \right]
\left( A_{T,22} - A_{T,21}A_{T,11}^{-1}A_{T,12} \right)^{-1}
\left[ \begin{array}{cc} -A_{T,21}A_{T,11}^{-1}, I \end{array} \right]
\end{equation}
noting that
$$
A_{T,11}^{-1} = (A_{0,11} + \delta_{11}^A)^{-1} = A_{0,11}^{-1} - A_{0,11}^{-1}\delta_{11}^A A_{0,11}^{-1} + o_P(a_T^2)
$$
since $a_T \to 0$ and $A_{0,11}^{-1} =O_P(1)$ by assumption.
Similarly
$$
\left( A_{T,22} - A_{T,21}A_{T,11}^{-1}A_{T,12} \right)^{-1}
= A_{0,22}^{-1} - A_{0,22}^{-1}\left( \delta_{22}^A  - \delta_{21}^A A_{0,11}^{-1} \delta_{12}^A - \delta_{22}^A A_{0,22}^{-1} \delta_{22}^A \right) A_{0,22}^{-1} + o_P(a_T^2)
$$
follows. The remaining calculations are tedious but straightforward and hence omitted.
\end{proof}

\begin{lemma} \label{lem:linsvd}
Define the two generalized eigenvalue problems: 
$$
(a) \quad \bar Q \bar G = \bar M \bar G \bar R^2, \quad 
(b) \quad \bar \Phi \bar \Gamma = \bar \Psi \bar \Gamma \bar \Theta^2. 
$$
where $\bar G \in \mR^{m_r \times n}, \bar \Gamma \in \mR^{m_r \times n}, \bar R^2 \in \mR^{n \times n}, \bar \Theta \in \mR^{n \times n}$.
Further $\bar \Psi$ and $\bar \Theta$ are assumed to be nonsingular a.s. and $\bar \Theta$ is diagonal.

(I) If $J:= \bar Q - \bar \Phi = O(a_T)$ and $\delta_{zz} := \bar M - \bar \Psi = O(b_T)$ (where $a_T \to 0, b_T \to 0$ for $T 
\to \infty$)   
then there exists matrices $\bar G$ and $\bar R$ solving the eigenvalue problem (a)
and matrices $\bar \Gamma$ and $\bar \Theta$ solving (b)
such that
$\bar \Gamma' S_p = I_n$ (where $S_p$ denotes a selector matrix, i.e. a matrix consisting of columns of the identity matrix), 
$\bar G - \bar \Gamma = O(a_T+b_T), \bar R- \bar \Theta =O(a_T+b_T)$.

(II) Further let $\delta G := \bar G - \bar \Gamma$.
Then the following two equations hold ($\bar \Gamma^\dagger := (\bar \Gamma' \bar \Psi \bar \Gamma)^{-1} \bar \Gamma'$):
\begin{eqnarray}
\bar \Phi\delta G - \bar \Psi \delta G \bar R^2 & = & \delta_{zz} \bar{G}\bar R^2 +
\bar \Psi\bar \Gamma(\bar R^2 - \bar{\Theta}^2)-J\bar{G} \label{equ:dg}, \\
(I_m - \bar \Psi\bar \Gamma\bar \Gamma^\dagger)\bar \Psi \delta G \bar R^2  & = &
(I_m - \bar \Psi\bar \Gamma\bar \Gamma^\dagger)\left[ J\bar{G} - \delta_{zz}\bar{G}\bar R^2 + \bar \Phi \delta G \right]
\label{eq:imgg}
\end{eqnarray}

(III) By transforming $\check G = \bar G(S_p' \bar G)^{-1}$ it follows that $\check G$ solves the generalized eigenvalue problem (a) with
matrix $\check R^2 = (S_p' \bar G) \bar R^2 (S_p' \bar G)^{-1}$. Then $\check G - \bar \Gamma = O(a_T+b_T), \check R- \bar \Theta =O(a_T+b_T)$. 
Here $\check R$ is not necessarily block diagonal.  
\end{lemma}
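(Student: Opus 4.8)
The plan is to read Part (III) as an exact algebraic renormalisation of the solution $(\bar G,\bar R)$ already furnished by Part (I), followed by elementary perturbation bookkeeping; no new eigenvalue analysis is needed. First I would check that $\check G = \bar G(S_p'\bar G)^{-1}$ again solves problem (a). Starting from the identity $\bar Q\bar G = \bar M\bar G\bar R^2$ guaranteed by Part (I) and multiplying on the right by $(S_p'\bar G)^{-1}$ gives $\bar Q\check G = \bar M\bar G\bar R^2(S_p'\bar G)^{-1} = \bar M\check G\,(S_p'\bar G)\bar R^2(S_p'\bar G)^{-1}$, so that with $\check R^2 := (S_p'\bar G)\bar R^2(S_p'\bar G)^{-1}$ one obtains $\bar Q\check G = \bar M\check G\check R^2$, which is exactly the claimed eigenvalue relation. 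By construction $S_p'\check G = S_p'\bar G(S_p'\bar G)^{-1} = I_n$, i.e. $\check G$ carries precisely the normalisation imposed on $\bar\Gamma$ in Part (I) (recall $\bar\Gamma'S_p = I_n$, equivalently $S_p'\bar\Gamma = I_n$).

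Next I would control the normalising factor $S_p'\bar G$. Using $S_p'\bar\Gamma = I_n$ together with the bound $\bar G - \bar\Gamma = O(a_T+b_T)$ from Part (I), I get $S_p'\bar G = I_n + S_p'(\bar G - \bar\Gamma) = I_n + O(a_T+b_T)$. Since $a_T,b_T\to 0$ this matrix converges to $I_n$ and is therefore invertible for $T$ large enough (a.s.), and the Neumann expansion $(I_n+E)^{-1} = I_n - E + O(\|E\|^2)$ yields $(S_p'\bar G)^{-1} = I_n + O(a_T+b_T)$. Substituting into $\check G = \bar G(S_p'\bar G)^{-1} = (\bar\Gamma + O(a_T+b_T))(I_n + O(a_T+b_T))$ and expanding, using that $\bar\Gamma$ is an a.s. finite limit so $\bar\Gamma = O(1)$, gives the first assertion $\check G - \bar\Gamma = O(a_T+b_T)$.

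For the eigenvalue matrix I would take as the root the \emph{same} similarity transform, $\check R := (S_p'\bar G)\bar R(S_p'\bar G)^{-1}$, which indeed squares to $\check R^2$ and sidesteps the need to extract a matrix square root of the non-symmetric matrix $\check R^2$. Writing $S_p'\bar G = I_n + E$ with $E = O(a_T+b_T)$ and using $\bar R = \bar\Theta + O(a_T+b_T) = O(1)$, the two-sided expansion $(I_n+E)\bar R(I_n - E + O(\|E\|^2))$ collapses to $\bar R + E\bar R - \bar R E + O(\|E\|^2) = \bar\Theta + O(a_T+b_T)$, which is the second assertion. Finally, because $\check R$ is a genuine (non-orthogonal) conjugate of the diagonal matrix $\bar R$ by $S_p'\bar G$, the off-diagonal entries $E\bar R - \bar R E$ do not vanish in general, so $\check R$ is full rather than diagonal — precisely the remark that $\check R$ need not be block diagonal.

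The only delicate point, and the nearest thing to an obstacle, is the well-definedness of the renormalisation: one must know that $S_p'\bar G$ is invertible. This is settled by the convergence $S_p'\bar G\to I_n$, which holds for $T$ large enough and guarantees both the existence of the inverses above and the clean propagation of the $O(\cdot)$ orders. Everything else is routine matrix algebra, and the identical argument goes through with the almost sure $O(\cdot)$ statements replaced by their in-probability counterparts.
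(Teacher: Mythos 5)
Your argument covers only Part (III) of the lemma, and Part (III) is the one part the paper itself dismisses with ``follows immediately from (I)''. What you write for it is correct and essentially the paper's intended argument: right-multiplying the relation $\bar Q\bar G=\bar M\bar G\bar R^2$ by $(S_p'\bar G)^{-1}$, checking invertibility of $S_p'\bar G=I_n+O(a_T+b_T)$ via $S_p'\bar\Gamma=I_n$, and propagating the orders through the similarity transform is exactly what is needed, and your observation that $\check R=(S_p'\bar G)\bar R(S_p'\bar G)^{-1}$ is a conjugate of a diagonal matrix and hence generally not diagonal is the right justification for the last remark.

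The genuine gap is that Parts (I) and (II) — the substantive content of the lemma — are never proved; you treat the existence of $(\bar G,\bar R)$ with $\bar G-\bar\Gamma=O(a_T+b_T)$ and $\bar R-\bar\Theta=O(a_T+b_T)$ as ``already furnished'', but that is precisely claim (I). Establishing it requires a perturbation analysis of the generalized eigenvalue problem: the paper rewrites (a) and (b) as ordinary eigenvalue problems for ${\cal M}_T=\bar M^{-1}\bar Q$ and ${\cal M}_0=\bar\Psi^{-1}\bar\Phi$, invokes the spectral approximation theory of Chatelin to obtain a normalization in which the eigenspaces depend analytically on the matrix (this is where possibly repeated eigenvalues must be handled, at the cost of $\bar R^2$ being only block diagonal), uses the first-order expansion $\tilde\varphi_i-\varphi_{0,i}=(\lambda_{0,i}I-{\cal M}_0)^\dagger({\cal M}_T-{\cal M}_0)\varphi_{0,i}+O(\|{\cal M}_T-{\cal M}_0\|^2)$ together with ${\cal M}_T-{\cal M}_0=O(a_T+b_T)$, and only then renormalizes by $(S_p'\Gamma)^{-1}$ to obtain the selector normalization $\bar\Gamma'S_p=I_n$. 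None of this is ``routine matrix algebra'', and without it your invertibility argument for $S_p'\bar G$ has nothing to stand on. Part (II) is likewise missing: equation~\eqref{equ:dg} must be derived by substituting $\bar Q=\bar\Phi+J$, $\bar M=\bar\Psi+\delta_{zz}$, $\bar G=\bar\Gamma+\delta G$ into (a) and subtracting (b), and equation~\eqref{eq:imgg} then follows by premultiplying \eqref{equ:dg} with $\bar\Gamma^\dagger$ to isolate $\bar R^2-\bar\Theta^2$ and substituting back. As it stands the proposal proves roughly the easiest third of the statement.
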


\begin{proof}
Solutions to the generalized eigenvalue problem are not identified. If all eigenvalues are
distinct then fixing the sign of one nonzero entry in each column of $\bar \Gamma$
results in a unique solution \citeaffixed[p. 1246, for a discussion]{BauDeiSch96}{see e.g.}.
If there are repeated eigenvalues then
more restrictions need to be introduced in order to achieve identification.
It follows from operator theory \citeaffixed{Chatelin}{cf. e.g.}
that there exist normalizations such that
the solution to the eigenvalue problem depends analytically on the
matrix which is decomposed, i.e. such that $G - \Gamma =o(1)$ a.s.
In these normalizations $R^2$ is not
necessarily diagonal while still being block diagonal
where the blocks correspond to the identical
eigenvalues in $\Theta$.

In particular let
the sequence of  matrices ${\cal M}_T \to {\cal M}_0$.
Let $\tilde \varphi_i$ denote the
matrix whose columns span the eigenspaces of ${\cal M}_T$ corresponding to the
eigenvalues $\tilde \lambda_{j} \to \lambda_{0,i}, j=1,\dots,m_i$
where $m_i$ denotes the multiplicity
of the eigenvalue $\lambda_{0,i}$ of ${\cal M}_0$ with corresponding
eigenspace spanned by the columns of the matrix $\varphi_{0,i}$.
Here it is assumed that the normalization $\tilde \varphi_i ' \varphi_{0,i} = I_{m_i} = \varphi_{0,i}'\varphi_{0,i}$
is chosen. Then
it holds that
\begin{equation} \label{equ:linsvd}
\tilde \varphi_{i} - \varphi_{0,i} = (\lambda_{0,i}I - {\cal M}_0)^\dagger ({\cal M}_T - {\cal M}_0)\varphi_{0,i}
+ O(\| {\cal M}_T- {\cal M}_0 \|^2 ).
\end{equation}

Here $X^\dagger$ denotes the Moore-Penrose pseudo-inverse. In particular let ${\cal M}_T = \bar M^{-1} \bar Q$ and ${\cal M}_0 = \bar \Psi^{-1}\bar \Phi$ and the columns of $G$ and $\Gamma$ equal $\tilde \varphi_{i}$ and $\varphi_{0,i}$ 
respectively. The condition of nonsingularity for $\bar \Theta$ ensures 
separation from the kernel of ${\cal M}_0$ and hence correct specification of the size of $\Gamma$.  
Then let $\bar \Gamma := \Gamma(S_p'\Gamma)^{-1}, \bar G := G(S_p'\Gamma)^{-1}$. 
Clearly $\bar \Gamma$ is a solution to the problem (b) fulfilling the assumption of the Lemma.\\
The assumptions imply that ${\cal M}_T - {\cal M}_0 = O(a_T+b_T)$ showing $G - \Gamma = O(a_T+b_T)$ and consequently $\bar G - \bar \Gamma = O(a_T+b_T)$. 
The order of convergence
for $\bar R - \bar \Theta$ then follows from the fact that all other terms in (a) and (b) differ only by this order.

(II) 
Equation \eqref{equ:dg} follows from simple algebraic manipulations using the
definitional equations (a) and (b) and $\bar Q = \bar \Phi + J, \bar M = \bar \Psi + \delta_{zz}, \bar G = \bar \Gamma + \delta G$.
Premultiplying \eqref{equ:dg} with $\bar \Gamma^\dagger$ 
a rearranging of terms leads to
\begin{equation}\label{eq:singval}
\bar R^2 - \bar{\Theta}^2 =   \bar \Gamma^\dagger[\bar \Phi\delta G - \bar \Psi \delta G \bar R^2-\delta_{zz} \bar{G}\bar R^2 +J\bar{G}].
\end{equation}

Inserting this into \eqref{equ:dg} shows that
$$
(I_m - \bar \Psi\bar \Gamma\bar \Gamma^\dagger)\left[ \bar \Phi\delta G - \bar \Psi \delta G \bar R^2 -  \delta_{zz} \bar{G}\bar R^2 + J\bar{G} \right] = 0.
$$
This shows equation~\eqref{eq:imgg}.
\\
(III) Follows immediately from (I).
\end{proof}

\subsection{Proof of Theorem~\ref{thm:repr}}
(I) Note that $\mbox{diag}(\Delta(L) I_{c_r},I_{m_r-c_r})H_{r}' z_{t}^r = c_v(L)\ve_t$.
Consequently the dimension of the cointegrating space of $(z_t^r)_{t \in \mN}$ 
is equal to $m_r - c_r$.
The claim on the dimension of the cointegrating space for
$(y_t - b_u z_{t}^u)_{t \in \mZ}$ also follows immediately from this representation. 

(II)
Note that $\tilde y_t$ denotes a transformation of
$y_t - b_u z_t^u = b_r z_t^r  + \Lambda\ve_t = b_r H_r H_r' z_t^r  + \Lambda\ve_t$ which
equals the estimation equation with the effects of $z_{t}^u$ removed. Here we use that $H_r$ was defined to be orthogonal. 
Next it is proved that matrices ${\mathcal T}_y$ and ${\mathcal T}_{z,r}$ transforming the equation into the
required form exist.

Let  ${\mathcal T}_y \in \mR^{s \times s}$ and nonsingular $C \in \mR^{c_r \times c_r}$ be chosen such that
$$
{\mathcal T}_y b_r H_{r,\parallel} C  =\left[ \begin{array}{cc} I_{c_y} & 0 \\ 0 & 0  \end{array} \right] \in \mR^{s \times c_r}.
$$
It is easy to see that such choices always exist since $c_y$ denotes the rank of $b_r H_{r,\parallel}$.
Then in $\tilde y_t := {\mathcal T}_y (y_t - b_u z_t^u)= {\mathcal T}_y (b_r H_r H_r' z_t^r + \Lambda \ve_t)$ the first $c_y$ coordinates are integrated,
the remaining being stationary.
Choosing $\bar{\mathcal T}_{z,r} = \mbox{diag}(C^{-1},I_{m_r-c_r})H_r'$ 
we obtain that the first $c_r$ components of $\bar{\mathcal T}_{z,r}z_t^r$
are integrated, the remaining
ones being stationary. Using the above equation we obtain
$$
{\mathcal T}_y b_r \bar{{\mathcal T}}_{z,r}^{-1} = \left[ \begin{array}{ccc} I_{c_y} & 0 & \tilde b_{r,13} \\ 0 & 0 & \tilde b_{r,23} \end{array} \right].
$$
Then the choice
$$
{\mathcal T}_{z,r} =  \left[ \begin{array}{ccc} I & 0 & \tilde b_{r,13} \\ 0 & I & 0 \\ 0 & 0 & I \end{array} \right] \bar{{\mathcal T}}_{z,r}
$$
leads to the required representation. The remaining claims are straightforward to derive. Details are omitted.

\subsection{Proof of Theorem~\ref{thm:RRR}}
\subsubsection{Consistency} 

Note that
all estimators can be obtained in a two step procedure by first
concentrating out $z_{t}^u$ and afterwards maximizing the quasi likelihood
with respect to $\beta_r$ (Frisch-Waugh-Lovell equations).
For fixed estimate $\hat \beta_r$ the least squares estimator for
$b_u$ is given by $\hat \beta_u = \la y_t - \hat \beta_r z_{t}^r, z_t^u \ra \la z_t^u , z_t^u \ra^{-1}$.
Therefore
\begin{equation} \label{eq:defbetau}
\hat \beta_u - b_u = \la b_r z_{t}^r + b_u z_t^u + \Lambda \ve_t - \hat \beta_r z_t^r - b_u z_t^u, z_t^u \ra 
\la z_t^u , z_t^u \ra^{-1}
= \la \Lambda\ve_t, z_t^u \ra \la z_t^u, z_t^u\ra^{-1} + (b_r - \hat \beta_r) \la z_t^r , z_t^u \ra \la z_t^u, z_t^u \ra^{-1}.
\end{equation}

This formula applies for the restricted and the unrestricted estimator.
For the first term note that 
$$
\la \ve_t, z_t^u \ra \la z_t^u, z_t^u\ra^{-1} = \la \ve_t, {\mathcal T}_{z,u}z_t^u \ra \la {\mathcal T}_{z,u} z_t^u, {\mathcal T}_{z,u}z_t^u\ra^{-1} {\mathcal T}_{z,u}
= \la \ve_t, \tilde z_t^u \ra \la \tilde z_t^u, \tilde z_t^u\ra^{-1} {\mathcal T}_{z,u}.
$$

Now \cite{baunote} implies that 
$$
\la \ve_t, \tilde z_t^u \ra \la \tilde z_t^u, \tilde z_t^u\ra^{-1} = [O(P_T),O(Q_T)]
$$
where $Q_T = \sqrt{\log \log T/T}$ and $P_T = \sqrt{\log T \log \log T/T^2}$.

In order to simplify the notation we use the symbols $\tilde y_t^\pi := {\mathcal T}_y(y_t - \la y_t, z_t^u \ra \la z_t^u, z_t^u \ra^{-1}z_t^u)$
and $\tilde z_t^\pi := {\mathcal T}_{z,r}(z_t^r - \la z_t^r, z_t^u \ra \la z_t^u, z_t^u \ra^{-1}z_t^u)$
throughout the proof.
Here the superscript $^r$ corresponding to $z_{t}^r$ will be omitted for
notational simplicity. The corresponding symbols $\tilde y_{t,i}^\Pi$ and $\tilde z_{t,i}^\Pi$
denote the corresponding
limit (a.s.) for $T \to \infty$ (where the symbols are only used if the limit exists).
In general the residuals of the regression of any variable
onto $z_t^u,t =1,\dots,T$ will be denoted using the superscript $^\pi$ and $^\Pi$ will denote
the corresponding limit (where it exists).

Using the same result and the Frisch-Waugh-Lovell equations for the first term and  
the orders of convergence stated in Lemma~\ref{lem:bound}
\begin{eqnarray*}
\la \tilde \ve_t, \tilde z_t^\pi \ra \la \tilde z_t^\pi, \tilde z_t^\pi\ra^{-1} & = & [O(P_T),O(Q_T)], \\
\la  \tilde z_t , \tilde z_t^u \ra \la \tilde z_t^u, \tilde z_t^u \ra^{-1} & = & 
\la  \tilde z_t , \tilde z_t^u \ra \mbox{diag}(T^{-1}I,I)(\la \tilde z_t^u, \tilde z_t^u \ra \mbox{diag}(T^{-1}I,I))^{-1} 
\\ &  = & \left[ \begin{array}{cc} O((\log\log T)^2) & O(\log T(\log \log T)^2) \\
O(1) & O(1) \end{array} \right].
\end{eqnarray*}

Therefore it is sufficient to show that $(b_r - \hat \beta_r)$ converges to zero. To this end a transformed problem such that all transformed matrices converge to 
nonrandom matrices is analyzed first. In this setting it will be possible to provide a.s. bounds for convergence rates. Afterwards the solution to the transformed problem 
is related to the solutions of the original problem. 

Thus consider the transformed problem using the transformation matrices 
$\check D_y = \mbox{diag}(\la \tilde z_{t,1}, \tilde z_{t,1} \ra^{-1/2},I)$  and 
$\check D_r = \mbox{diag}(\la \tilde z_{t,1}, \tilde z_{t,1} \ra^{-1/2},I)$ respectively to transform the input and output 
of the estimated regression according to $\check y_t = \check D_y \tilde y_t,  \check z_t = \check D_r \tilde z_t$. 
The transformed estimator $\check \beta_r := \check D_y  \tilde \beta_{OLS,r} \check D_z^{-1}$ converges to $\tilde b_{r} = {\mathcal T}_y b_r {\mathcal T}_z^{-1}=OG'$ where the last equation
defines $O$ and $G$.
Adapting the weighting $\check \Xi_+ := \hWf \check D_y^{-1}$ 
we obtain $\Wf = \mbox{diag}(I_{c_y}, (\mE \tilde y_{t,2}\tilde y_{t,2}')^{-1/2})$ and $\check \Xi_+ - \Wf = O((\lT) \llT /\sqrt{T})$ 
as needed in Lemma~\ref{lem:dKp}. This is obtained using the Cholesky factor as the square root of a matrix which is a differentiable operation.
With this new normalization we obtain\footnote{Here and below we will not always use the tightest possible bounds but 
use powers of $\log(T)$ instead for readability. Improvements are possible but their practical merits must be doubted.} 
\begin{eqnarray*}
\delta \check \beta_r  & := &  \check \beta_{OLS,r} - \tilde b_r = \check D_y ( \tilde \beta_{OLS,r} - \tilde b_r) \check D_z^{-1} = \check D_y [O(P_T), O(Q_T) ] \check D_z^{-1}  \\
 & = & 
\left[ \begin{array}{cc}   O(\lT (\llT)^3 /T) & O((\llT)^{3/2}/T) \\ O((\lT)^{3/2}/\sqrt{T}) & O(Q_T) \end{array} \right], \\
\check \beta_{RRR,r} - \tilde b_r & = & \check O \check G' - OG' = (\check O-O)\check G' + O(\check G'-G'), \\  
\check O - O & = & (\check \beta_{RRR,r} - \tilde b_{r})S_p, \\ 
\check G' - G' & = & \Of^\dagger (\check \beta_{OLS,r} - \tilde b_{r})(I-S_pG') +
(\bOf^\dagger - \Of^\dagger) (\check \beta_{OLS,r} - \tilde b_{r})(I-S_pG')
\end{eqnarray*}
where
$$
S_p = \left[ \begin{array}{cc} I & 0 \\ 0 & 0 \\ 0 & I \\ 0 & 0 \end{array}\right], 
G = \left[ \begin{array}{cc} I & 0 \\ 0 & 0 \\ 0 & \Gamma_{3,2} \end{array} \right],
I-S_pG' = I - \left[ \begin{array}{ccc} I & 0 & 0  \\ 0 & 0 & 0 \\ 0 & 0 & \Gamma_{32}' \\ 0 & 0 & 0 \end{array} \right].
$$

Here the first two block rows of $S_p$ correspond to $\tilde z_{t,1}$ and $\tilde z_{t,2}$. The remaining two blocks 
correspond to $\tilde z_{t,3}$. Since $\check \beta_{RRR,r}$ is a best rank $n$ approximation to $\check \beta_r$ (see the proof of Lemma~\ref{lem:dKp}) the 
rate of convergence of $\check \beta_r$ implies that $\check \beta_{RRR,r} - \tilde b_r = O((\lT)^{3/2}/\sqrt{T})$.
Consequently also   
$\check O-O=O((\lT)^{3/2}/\sqrt{T})$ and hence also
$ \bOf^\dagger - \Of^\dagger  = O((\lT)^{3/2}/\sqrt{T})$ where $\bOf^\dagger := (\check O' \check \Xi_+^2 \check O)^{-1} \check O' \check \Xi_+^2$.
This shows the convergence rates for the solutions to the transformed problem. It thus remains to connect the solution to the original problem to the solution of the 
transformed problem. 

It is straightforward to see that the untransformed estimate $\tilde G' = (\bOf^\dagger \check D_y \tilde \beta_{OLS,r} S_p)^{-1} \bOf^\dagger \check D_y \tilde \beta_{OLS,r}$
such that $\tilde G'S_p = I_n$.  
According to the limits above 
\begin{eqnarray*}
\bOf^\dagger \check D_y \tilde \beta_{OLS,r} \check D_z^{-1}  & = & \bOf^\dagger \tilde b_r + \bOf^\dagger (\check D_y \tilde \beta_{OLS,r} \check D_z^{-1} - \tilde b_r)    \\
& = & 
\bOf^\dagger OG'  + \bOf^\dagger \delta \check  \beta_{r} 
\end{eqnarray*}

Now let $\check D_n = \mbox{diag}(\la \tilde z_{t,1}, \tilde z_{t,1} \ra^{-1/2},I_{n-c_y})$ such that $\check D_z S_p = S_p \check D_n$. 
Then 
\begin{eqnarray*}
[\tilde G' - G'] & = & 
\check D_n^{-1} (\bOf^\dagger O + \bOf^\dagger \delta \check  \beta_{r}S_p)^{-1} (\bOf^\dagger OG'  + \bOf^\dagger \delta \check  \beta_{r})\check D_z - G' \\ 
& = &
\check D_n^{-1} (\bOf^\dagger O + \bOf^\dagger \delta \check  \beta_{r}S_p)^{-1} ((\bOf^\dagger O + \bOf^\dagger \delta \check  \beta_{r}S_p)G' - \bOf^\dagger \delta \check  \beta_{r}S_pG' + \bOf^\dagger \delta \check  \beta_{r})\check D_z - G' \\ 
& = & 
\check D_n^{-1} (\bOf^\dagger O + \bOf^\dagger \delta \check  \beta_{r}S_p)^{-1} \bOf^\dagger \delta \check  \beta_{r} (I-S_pG') \check D_z \\
& = & 
\check D_n^{-1} (\bOf^\dagger O + \bOf^\dagger \delta \check  \beta_{r}S_p)^{-1} \bOf^\dagger \delta \check  \beta_{r} \check D_z (I-S_pG')  \\
& = & \check D_n^{-1} \Of^\dagger \delta \check  \beta_{r} \check D_z (I-S_pG') + \check D_n^{-1} O((\lT)^3/T) \check D_z \\
& = &  \Of^\dagger \check D_y^{-1} \delta \check  \beta_{r} \check D_z (I-S_pG') + \check D_n^{-1} O((\lT)^3/T) \check D_z \\
& = & [O((\lT)^4/T,(\lT)^4/\sqrt{T})]
\end{eqnarray*}
where we have used that all terms have been shown above to be of order $O((\lT)^{3/2}/\sqrt{T})$. 
Further (due to the usage of the SVD and the corresponding orthogonality relations)
\begin{eqnarray*}
\tilde O - O & = & \tilde O \tilde G' \la \tilde z_t^{\pi} , \tilde z_t^{\pi} \ra   \tilde G^\dagger - O = \tilde \beta_{OLS,r} \la \tilde z_t^{\pi} , \tilde z_t^{\pi} \ra \tilde G^\dagger - O  \\
& = & \la \tilde y_t , \tilde z_t^\pi \ra \tilde G^\dagger - O  
= \la \tilde b_r \tilde z_{t} + \tilde \ve_t , \tilde z_t^\pi \ra  \tilde G^\dagger - O \\
& = & O G' \la  \tilde z_t^\pi , \tilde z_t^\pi \ra  \tilde G^\dagger - O  + 
\la  \tilde \ve_t , \tilde z_t^\pi \ra  \tilde G^\dagger \\ 
& = & 
O \tilde G'  \la \tilde z_t^{\pi} , \tilde z_t^{\pi} \ra \tilde G^\dagger 
- O  + 
O (G'-\tilde G') \la \tilde z_t^{\pi} , \tilde z_t^{\pi} \ra \tilde G^\dagger +
\la  \tilde \ve_t , \tilde z_t^\pi \ra  \tilde G^\dagger \\
& = & O (G'-\tilde G') \la \tilde z_t^{\pi} , \tilde z_t^{\pi} \ra \tilde G^\dagger +
\la  \tilde \ve_t , \tilde z_t^\pi \ra  \tilde G^\dagger
\end{eqnarray*}
where 
$$
\tilde G^\dagger :=  \tilde G (\tilde G' \la \tilde z_t^{\pi} , \tilde z_t^{\pi} \ra \tilde G)^{-1}
=  \tilde G \check D_n (\check D_n\tilde G' \la \tilde z_t^{\pi} , \tilde z_t^{\pi} \ra \tilde G\check D_n)^{-1}\check D_n.
$$ 

It then follows from
the orders of convergence provided in Lemma~\ref{lem:bound} 
that $\tilde D_z \la \tilde z_t^{\pi} , \tilde z_t^{\pi} \ra \tilde G^\dagger \tilde D_n^{-1} =  O(\lT)$. 
Consequently
\begin{eqnarray*}
[\tilde G'-G']\la \tilde z_t^{\pi} , \tilde z_t^{\pi} \ra \tilde G^\dagger  & = &  [O((\lT)^4/T), O( (\lT)^4 /\sqrt{T}] \tilde D_z^{-1} \tilde D_z \la \tilde z_t^{\pi} , \tilde z_t^{\pi} \ra \tilde G^\dagger  \\
& = & [O((\lT)^6/T),O((\lT)^6/\sqrt{T})].
\end{eqnarray*}

Furthermore
\begin{eqnarray*}
 \la \tilde \ve_t^{\pi} , \tilde z_t^{\pi} \ra \tilde G^\dagger  & = & 
 \la \tilde \ve_t^{\pi} , \tilde z_t^{\pi} \ra \la \tilde z_t^{\pi} , \tilde z_t^{\pi} \ra^{-1}
\tilde D_z^{-1} \tilde D_z \la \tilde z_t^{\pi} , \tilde z_t^{\pi} \ra \tilde G^\dagger \\
& = &  [O(P_T),O(Q_T)]\tilde D_z^{-1} O(\lT) \tilde D_n
\\
& = &  [O((\lT)^3/T), O((\lT)^3)/\sqrt{T})].
\end{eqnarray*}

Together these orders imply $\tilde O -O = [O((\lT)^6/T),O((\lT)^6/\sqrt{T})]$ 
and thus
$$
\tilde \beta_{RRR,r} - \tilde b_r = (\tilde O-O)G' + O(\tilde G'-G') + (\tilde O-O)(\tilde G'-G') = [O((\lT)^6/T),O((\lT)^6/\sqrt{T})].  
$$

Consequently we obtain from transforming~\eqref{eq:defbetau}
$$
%\tilde \beta_{RRR,r} - \tilde b_r = [O((\lT)^5/T),O((\lT)^5/\sqrt{T})], \quad
\tilde \beta_{RRR,u} - \tilde b_u = O((\lT)^6/\sqrt{T}).
$$

This shows the convergence rates.

\subsubsection{Asymptotic Normality}

In order to derive the asymptotic distribution of the estimator $\hat \beta_{RRR}$
the proof extends the theory
contained in \citeasnoun{Anderson02}. Since the proof is rather lengthy, the main
steps are documented using lemmas summing up the main intermediate results. 

Note that the RRR estimator
is obtained from the singular value decomposition (using the symmetric matrix square roots)
$$
\la \tilde y_t^\pi, \tilde y_t^\pi \ra^{-1/2}\la \tilde y_t^\pi , \tilde z_t^\pi \ra
\la \tilde z_t^\pi , \tilde z_t^\pi \ra^{-1/2} = \hat U \hat R \hat V'.
$$

Then
as in \citeasnoun{Anderson02} (1.10), p. 205, the reduced rank estimator
can be obtained as
$$
{\mathcal T}_y\hat \beta_{RRR,r}{\cal T}_z^{-1} = \la \tilde y_t^\pi, \tilde z_t^\pi \ra \tilde G (\tilde G'\la \tilde z_t^\pi , \tilde z_t^\pi \ra \tilde G)^{-1} \tilde G'
$$
where 
$\tilde G = \la \tilde z_t^\pi , \tilde z_t^\pi \ra^{-1/2} \hat V_n {\mathcal T}_G \in \mR^{m \times n}$
satisfies the following equations
$$
\la \tilde z_t^\pi , \tilde y_t^\pi \ra \la \tilde y_t^\pi , \tilde y_t^\pi \ra^{-1}\la \tilde y_t^\pi , \tilde z_t^\pi \ra \tilde G =
\la \tilde z_t^\pi , \tilde z_t^\pi \ra \tilde G ({\mathcal T}_G^{-1}\hat R^2{\mathcal T}_G)
%, \quad \tilde G'\la \tilde z_t^\pi , \tilde z_t^\pi \ra \tilde G = I_n
$$
where $\hat R^2=\mbox{diag}(\hat r_1^2,\hat r_2^2,\dots,\hat r_n^2)$ denotes the matrix
containing the squares of the $n$ largest estimated singular values as its diagonal entries.
The function of the transformation matrix ${\mathcal T}_G$ will become clear from the following.

Introduce the following notation (where in $D_z$ the subscript $_r$ is omitted for notational simplicity):
\begin{eqnarray*}
\tilde D_z & := & D_z T^{1/2} = \mbox{diag}(T^{-1/2}I,I), \quad \tilde D_y =  D_y T^{1/2} = \mbox{diag}(T^{-1/2}I,I),\\
\bar{G} & := & \tilde D_z^{-1} \tilde G, \\
\bar Q & := & \la \tilde D_z \tilde z_t^\pi , \tilde D_y \tilde y_t^\pi \ra
\la \tilde D_y \tilde y_t^\pi , \tilde D_y \tilde y_t^\pi \ra^{-1}
\la \tilde D_y \tilde y_t^\pi , \tilde D_z \tilde z_t^\pi \ra, \\
\bar M & := & \la \tilde D_z \tilde z_t^\pi , \tilde D_z \tilde z_t^\pi \ra, \\
\bar \Phi & := &  \left[ \begin{array}{ccc} T^{-1}\la \tilde z_{t,1}^\pi, \tilde z_{t,1}^\pi\ra & T^{-1}\la \tilde z_{t,1}^\pi, \tilde z_{t,2}^\pi\ra & 0 \\
T^{-1}\la \tilde z_{t,2}^\pi, \tilde z_{t,1}^\pi\ra & T^{-1}\la \tilde z_{t,2}^\pi, \tilde z_{t,1}^\pi\ra \la \tilde z_{t,1}^\pi, \tilde z_{t,1}^\pi\ra^{-1}\la \tilde z_{t,1}^\pi, \tilde z_{t,2}^\pi\ra & 0 \\
0 & 0 & \la \tilde z_{t,3}^\pi, \tilde y_{t,2}^\pi\ra \la \tilde y_{t,2}^\pi, \tilde y_{t,2}^\pi\ra^{-1}\la \tilde y_{t,2}^\pi, \tilde z_{t,3}^\pi\ra
\end{array} \right], \\
\bar \Psi & := & \left[ \begin{array}{ccc} T^{-1}\la \tilde z_{t,1}^\pi, \tilde z_{t,1}^\pi\ra & T^{-1}\la \tilde z_{t,1}^\pi, \tilde z_{t,2}^\pi\ra & 0 \\
T^{-1}\la \tilde z_{t,2}^\pi, \tilde z_{t,1}^\pi\ra & T^{-1}\la \tilde z_{t,2}^\pi, \tilde z_{t,2}^\pi\ra & 0 \\
0 & 0 & \la \tilde z_{t,3}^\pi, \tilde z_{t,3}^\pi\ra
\end{array} \right].
\end{eqnarray*}

A summary of the (unfortunately heavy) notation used can be found in Appendix~\ref{sec:notation}.
The main guideline of the notation is to use
Latin letters for matrices in which the stationary and the nonstationary subproblems
are not separated (i.e. the off-diagonal blocks potentially are nonzero) and Greek letters for matrices for the decoupled problems. A bar
indicates estimates (appropriately normalized so that convergence holds).
This leads to two generalized eigenvalue problems related to SVDs:
$$
(a) \quad \bar Q \bar G = \bar M \bar G \bar R^2, \quad %  \mbox{such that}  \bar G' \bar M \bar G = I_n, \\
(b) \quad \bar \Phi \bar \Gamma = \bar \Psi \bar \Gamma \bar \Theta^2. %&  \mbox{such that} & \bar \Gamma'.
%\bar \Psi \bar \Gamma = I_n,
% \\
%(c) \quad \Phi \Gamma = \Psi \Gamma \Theta^2 &  \mbox{such that} & \Gamma' \Psi \Gamma = I, \\
$$

Hence $\bar G$ denotes the solution to the original problem (a),
$\bar \Gamma$ the solution to
problem (b) where stationary and nonstationary components are
separated. 
Consequently the solutions to (b) have the form:
\begin{equation} \label{equ:defG}
\bar \Gamma = \left[ \begin{array}{cc} I 
& 0 \\
 0 & 0 \\ 0 & \bar \Gamma_{3,2} \end{array} \right]
{\to} \Gamma = \left[ \begin{array}{cc} I 
& 0 \\
 0 & 0 \\ 0 & \Gamma_{3,2} \end{array} \right]
\end{equation}
where the corresponding SVD for the stationary subproblem of (b) and its limit can be written as
$$
\begin{array}{l}
\la \tilde z_{t,3}^\pi, \tilde z_{t,3}^\pi\ra \bar \Upsilon \bar \Theta_2  = 
\la \tilde z_{t,3}^\pi, \tilde y_{t,2}^\pi\ra \la \tilde y_{t,2}^\pi, \tilde y_{t,2}^\pi\ra^{-1}
\la \tilde y_{t,2}^\pi, \tilde z_{t,3}^\pi\ra \bar \Upsilon,\\
\mE \tilde z_{t,3}^\Pi (\tilde z_{t,3}^\Pi)' \Upsilon  \Theta_2  = 
\mE \tilde z_{t,3}^\Pi (\tilde y_{t,2}^\Pi)' (\mE \tilde y_{t,2}^\Pi (\tilde y_{t,2}^\Pi)')^{-1}
\mE \tilde y_{t,2}^\Pi (\tilde z_{t,3}^\Pi)' \Upsilon.
%\Pi  \Theta_2 \Upsilon' = (\mE \tilde y_{t,2}^\Pi (\tilde y_{t,2}^\Pi)')^{-1/2}
%\mE \tilde y_{t,2}^\Pi (\tilde z_{t,3}^\Pi)' (\mE \tilde z_{t,3}^\Pi (\tilde z_{t,3}^\Pi)')^{-1/2}.
\end{array}
$$

Solutions to these equation are not unique. In light of Lemma~\ref{lem:dKp} the restrictions $\Gamma_{3,2}'S_{p,22}=I = \bar \Gamma_{3,2}'S_{p,22}$ will be imposed. Here $S_{p,22}$ is a suitable selector 
matrix, i.e. a matrix whose columns are
columns of an identity matrix. 
W.r.o.g. it can be assumed that $S_{p,22}' = [I,0]$ by using an appropriate transformation ${\mathcal T}_z$. 
Note that this implies that $\Theta_2$ and $\bar \Theta_2$ are not necessarily diagonal. Let $S_p = [S_{p,1},S_{p,2}]$ where $S_{p,1}' = [I,0]$ and $S_{p,2}' = [0,S_{p,22}']$. 
Then $\Gamma' S_p = I = \bar \Gamma' S_p$ are sufficient restrictions to identify the solutions $\Gamma$ and $\bar \Gamma$.
Analogously $\bar G_{3,2}'S_{p,22} = I, \bar G_{1,1} = I, \bar R=\mbox{diag}(\bar{R}_{1},\bar{R}_{2})$ identify a solution (asymptotically, see Lemma~\ref{lem:dKp} and Lemma~\ref{lem:linsvd}). 
These solutions will be used in the following.  
Here $\bar \Theta_2$ and $\Theta_2$ resp. denote the $(2,2)$ blocks of
$\bar \Theta = \mbox{diag}(I,\bar \Theta_2)$ and $\Theta = \mbox{diag}(I,\Theta_2)$ respectively.

The relations between the various solutions to the generalized eigenvalue problem 
are collected in section~\ref{sec:notation}.
Throughout the rest of the proof we will use the
following notation for blocks of matrices: For a matrix $X$ partitioned into blocks we let
$X_{i,j}$ denote the blocks of the matrix. If multiple blocks are included also the
notation $'i:j'$ will be used indicating the matrix built of blocks with indices $i$ up to (and including)
$j$.
In order to denote block rows or columns
we use a semicolon for selecting the whole row or column. Hence e.g. $\bar G_{3,2}$ denotes the (3,2) block,
$\bar G_{1,:}$ the first block row and $\bar G_{1:2,1}$ the first two blocks rows in the first block column of the
matrix $\bar G$.

The next lemma establishes orders of convergence of the solutions to the generalized eigenvalue problems.
\begin{lemma} \label{lem:SVD}
Let the assumptions of Theorem~\ref{thm:RRR} hold. 

(I) Partition the matrices
$\bar Q, \bar M,\bar{\Phi}, \bar{\Psi}$ according to the partitioning of $\tilde z_{t}$
denoting the various blocks using subscripts. Then
\begin{eqnarray*}
\delta_{zz} & := & \bar{M}-\bar{\Psi} =
\left[\begin{array}{ccc} 0 & 0 & O_P(T^{-1/2}) \\ 0 & 0 & O_P(T^{-1/2}) \\ O_P(T^{-1/2}) & O_P(T^{-1/2}) & 0 \end{array} \right], \\
\delta_{yz} & := & \left[\begin{array}{ccc} T^{-1}(\la \tilde y_{t,1}^\pi, \tilde z_{t,1}^\pi \ra - \la \tilde z_{t,1}^\pi, \tilde z_{t,1}^\pi \ra)  &
T^{-1}(\la \tilde y_{t,1}^\pi, \tilde z_{t,2}^\pi \ra - \la \tilde z_{t,1}^\pi, \tilde z_{t,2}^\pi \ra ) &
T^{-1/2}\la \tilde y_{t,1}^\pi, \tilde z_{t,3}^\pi \ra \\
T^{-1/2}\la \tilde y_{t,2}^\pi, \tilde z_{t,1}^\pi \ra  &
T^{-1/2}\la \tilde y_{t,2}^\pi, \tilde z_{t,2}^\pi \ra &
0  \end{array} \right] \\
& = & \left[\begin{array}{ccc} O_P(T^{-1}) & O_P(T^{-1}) & O_P(T^{-1/2}) \\ O_P(T^{-1/2}) & O_P(T^{-1/2}) & 0 \end{array} \right], \\
\delta_{yy} & := & \left[\begin{array}{cc} T^{-1}(\la \tilde y_{t,1}^\pi, \tilde y_{t,1}^\pi \ra - \la \tilde z_{t,1}^\pi, \tilde z_{t,1}^\pi \ra)  &
T^{-1/2}\la \tilde y_{t,1}^\pi, \tilde y_{t,2}^\pi \ra \\
T^{-1/2}\la \tilde y_{t,2}^\pi, \tilde y_{t,1}^\pi \ra  &
0  \end{array} \right]
 =  \left[\begin{array}{cc} O_P(T^{-1}) & O_P(T^{-1/2})
 \\ O_P(T^{-1/2}) &  0 \end{array} \right].
\end{eqnarray*}
The terms $O_P(T^{-1})$ are  $O((\log T)(\log \log T)^{2}/T)$ and the
$O_P(T^{-1/2})$ terms are $O((\log T)(\log \log T)^{2}/T^{-1/2})$.

(II) Let $J := \bar{Q} - \bar \Phi$. To simplify notation define
$Z_{ij} := T^{-1}\la \tilde z_{t,i}^\pi, \tilde z_{t,j}^\pi \ra, i,j=1,2$. Then
\begin{equation} \label{equ:defJ}
\begin{array}{lll}
J_{i,j} & =&  [\delta_{zy}^{i1}-Z_{i1}Z_{11}^{-1}\delta_{yy}^{11}]Z_{11}^{-1}Z_{1j} +
Z_{i1}Z_{11}^{-1}\delta_{yz}^{1j} \\
& & + [\delta_{zy}^{i2}- Z_{i1}Z_{11}^{-1}\delta_{yy}^{12}] (\mE \tilde y_{t,2}^\Pi (\tilde y_{t,2}^\Pi)')^{-1}
[ \delta_{yz}^{2j}- \delta_{yy}^{21}Z_{11}^{-1}Z_{1j}] + o_P(T^{-1}),\\
J_{3,i} & = & \delta_{zy}^{31}Z_{11}^{-1}Z_{1i}  +
\la \tilde z_{t,3}^\pi, \tilde y_{t,2}^\pi\ra \la \tilde y_{t,2}^\pi, \tilde y_{t,2}^\pi \ra^{-1}
[\delta_{yz}^{2i}-\delta_{yy}^{21}Z_{11}^{-1}Z_{1i}] + o_P(T^{-1}),\\
J_{3,3} & = &
\left[ \la \tilde z_{t,3}^\pi, \tilde y_{t,2}^\pi\ra
(\la \tilde y_{t,2}^\pi, \tilde y_{t,2}^\pi\ra)^{-1}
\delta_{yy}^{21} - \delta_{zy}^{31} \right]
Z_{11}^{-1}
\left[ \delta_{yy}^{12}(\la \tilde y_{t,2}^\pi, \tilde y_{t,2}^\pi\ra)^{-1}\la \tilde y_{t,2}^\pi,
\tilde z_{t,3}^\pi\ra -\delta_{yz}^{13} \right]
 + o_P(T^{-1})
\end{array}
\end{equation}
for $i=1,2, j=1,2$ where expressions for the remaining blocks of $J$ follow from symmetry.
Hence $J_{i,j} = O_P(T^{-1})$ and indeed $J_{i,j} = O((\log T)^3 /T)$ for $i,j=1,2$.
Further $J_{3,i} = O_P(T^{-1/2})$ and indeed $J_{3,i} = O((\log T)^3/T^{-1/2})$ for $i=1,2$.
$J_{3,3} = O_P(T^{-1})$ and $J_{3,3} = O((\log T)^3 /T)$ respectively.

(III) $\delta G :=  \bar G - \Gamma = O_P(T^{-1/2})$ and moreover $\delta G = O((\log T)^3/T^{1/2})$. 
\end{lemma}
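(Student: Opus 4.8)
The plan is to read off the rate for $\delta G = \bar G - \Gamma$ from the abstract eigenvalue perturbation result of Lemma~\ref{lem:linsvd}, feeding it the block orders already recorded in parts~(I) and~(II), and then to absorb the strictly faster convergence of the decoupled problem~(b) to its limit. Concretely, I would split $\delta G = (\bar G - \bar \Gamma) + (\bar \Gamma - \Gamma)$, where $\bar \Gamma$ solves~(b) under the selector normalization $\bar \Gamma' S_p = I_n$ fixed above and $\Gamma$ is the limit displayed in~\eqref{equ:defG}, and bound the two summands separately.

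For the first summand I would apply Lemma~\ref{lem:linsvd}(I) to the pair (a),~(b) with $J = \bar Q - \bar \Phi$ and $\delta_{zz} = \bar M - \bar \Psi$. Part~(II) shows that the dominant blocks of $J$ are the cross blocks $J_{3,i} = J_{i,3}' = O((\log T)^3/T^{1/2})$, the interior blocks $J_{i,j}$ ($i,j=1,2$) and $J_{3,3}$ being only $O((\log T)^3/T)$; part~(I) gives $\delta_{zz} = O((\log T)(\log\log T)^2/T^{1/2})$. Hence I may take $a_T = (\log T)^3/T^{1/2}$ and $b_T = (\log T)(\log\log T)^2/T^{1/2}$, both tending to $0$, so Lemma~\ref{lem:linsvd}(I), in the selector-normalized form provided by its part~(III), yields $\bar G - \bar \Gamma = O(a_T + b_T) = O((\log T)^3/T^{1/2})$, with in-probability version $O_P(T^{-1/2})$.

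For the second summand I would use the block form~\eqref{equ:defG}: the first block column of $\bar \Gamma$ equals $[I,0,0]'$ exactly and coincides with that of $\Gamma$, so $\bar \Gamma - \Gamma$ reduces to the single block $\bar \Gamma_{3,2} - \Gamma_{3,2}$, governed by the stationary subproblem in $\tilde z_{t,3}^\pi$ and $\tilde y_{t,2}^\pi$ displayed after~\eqref{equ:defG}. Since the relevant sample second moments of these asymptotically stationary regression residuals differ from their $\mE$-limits by $O(Q_T)$ with $Q_T = \sqrt{\log\log T/T}$ (Lemma~\ref{lem:bound}(I), via the residual estimates of Lemma~\ref{lem:bound}(III)), applying the perturbation bound~\eqref{equ:linsvd} to this subproblem gives $\bar \Gamma_{3,2} - \Gamma_{3,2} = O(Q_T)$. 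As $Q_T = o((\log T)^3/T^{1/2})$ this term is dominated, and combining the two summands yields $\delta G = O((\log T)^3/T^{1/2})$ almost surely and $\delta G = O_P(T^{-1/2})$, as asserted. The step I expect to demand the most care is not any single estimate but the bookkeeping of normalizations: Lemma~\ref{lem:linsvd} rests on analytic perturbation theory that tracks eigenspaces rather than individual eigenvectors, so I must ensure that $\bar G$, $\bar \Gamma$ and $\Gamma$ are all compared under the same normalization (the one fixed by $\bar G_{3,2}'S_{p,22}=I$, $\bar G_{1,1}=I$ and $\bar \Gamma' S_p = \Gamma' S_p = I_n$) and that possibly repeated singular values in $\bar \Theta_2$ do not inflate the bound; once these are aligned, the rate follows purely from the dominant cross block $J_{3,i} = O_P(T^{-1/2})$.
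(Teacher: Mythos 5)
Your argument for part (III) is sound and follows the same route as the paper: the paper's entire proof of (III) is the one-line remark that it follows from (I) and (II) in combination with Lemma~\ref{lem:linsvd}, and your instantiation $a_T=(\log T)^3/T^{1/2}$, $b_T=(\log T)(\log\log T)^2/T^{1/2}$ is exactly the intended one. Your additional care about the discrepancy between $\bar\Gamma$ and its limit $\Gamma$ (reducing it to the stationary block $\bar\Gamma_{3,2}-\Gamma_{3,2}=O(Q_T)$, which is dominated by the leading rate) and about keeping the selector normalizations of $\bar G$, $\bar\Gamma$ and $\Gamma$ aligned is welcome, since the paper itself is not fully consistent about whether $\delta G$ denotes $\bar G-\bar\Gamma$ or $\bar G-\Gamma$.

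The gap is that you prove only one of the three parts. Parts (I) and (II) are conclusions of the lemma, not hypotheses, yet your proof takes ``the block orders already recorded in parts (I) and (II)'' as given inputs and never establishes them. Part (I) is a routine harvest of Lemma~\ref{lem:bound} applied to the residual processes $\tilde y_{t,i}^\pi$, $\tilde z_{t,j}^\pi$, but part (II) is the substantive computation of the lemma: one must recognize $\bar Q=B_T'A_T^{-1}B_T$ with $B_T=\tilde D_y\la\tilde y_t^\pi,\tilde z_t^\pi\ra\tilde D_z$ and $A_T=\tilde D_y\la\tilde y_t^\pi,\tilde y_t^\pi\ra\tilde D_y$, identify $\bar\Phi$ with the corresponding limiting quantity $B_0'A_0^{-1}B_0$, and invoke the block perturbation expansion of Lemma~\ref{lem:block} (noting $\delta_{22}^A=\delta_{22}^B=0$, which is precisely what forces $J_{3,3}$ down to the smaller order $O_P(T^{-1})$ rather than $O_P(T^{-1/2})$) to obtain the displayed expressions~\eqref{equ:defJ} and the a.s.\ orders $J_{i,j}=O((\log T)^3/T)$, $J_{3,i}=O((\log T)^3/T^{1/2})$. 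Without (II) your own application of Lemma~\ref{lem:linsvd} has no value of $a_T$ to feed in, so the proposal as written is circular on this point; you need to supply the Lemma~\ref{lem:block} step (or an equivalent direct block-matrix-inversion expansion) to close the proof.
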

\begin{proof}
(I) The orders of convergence for the various entries of $\delta_{zz}$ and $\delta_{yz}$
follow from Lemma~\ref{lem:bound}. Details are omitted.

(II) Set $B_T:=\tilde D_y \la \tilde y_t^\pi , \tilde z_t^\pi \ra \tilde D_z$ and $A_T:= \tilde D_y \la \tilde y_t^\pi,
\tilde y_t^\pi \ra \tilde D_y$
in Lemma~\ref{lem:block} where the partitioning refers to nonstationary and stationary
components in the various matrices. Note that for the in probability part
$a_T = b_T = T^{-1/2}$ and for the almost sure convergence
$a_T=b_T = \log T (\log \log T)^{2}/T^{-1/2}$ fulfill the assumptions of Lemma~\ref{lem:block}.
Also note that $\delta_{22}^A = 0, \delta_{22}^B = 0$ simplifying the expression for $J_{3,3}$.
Then Lemma~\ref{lem:block} proves this part of the lemma. The orders of convergence for the various entries of $J$ follow from the
equations given and the orders of convergence provided in Lemma~\ref{lem:bound}.
Here also the uniform bound on the two and infinity norm of $\la \tilde z_{t,2}^u, \tilde z_{t,2}^u \ra^{-1},
\la \tilde y_{t,2}^\pi, \tilde y_{t,2}^\pi \ra^{-1}$ which are implied by Assumption~P are used.

(III) follows from (I) and (II) in combination with Lemma~\ref{lem:linsvd}.
\end{proof}
The next lemma (proven in section~\ref{proof:asy}) gathers more detailed results on the asymptotic properties of the entries of $\delta G$:
\begin{lemma} \label{lem:asydeltaG}
Let the assumptions of Theorem~\ref{thm:repr} hold. Then 
we obtain 
\begin{eqnarray*}
\delta G_{1,1} & = & 0, \\
\delta G_{1,2} & = &  -Z_{11}^{-1}Z_{12} \delta G_{2,2}
+ Z_{11}^{-1}[\delta_{zz}^{13}\bar \Gamma_{3,2}\bar \Theta_2^2 - J_{1,3}\bar \Gamma_{3,2}](I-\bar \Theta_2^2)^{-1} + o(T^{-1/2}), \\
\delta G_{2,1}& = & (T^{-1} \la \tilde z_{t,2.1}^\pi ,\tilde z_{t,2.1}^\pi\ra)^{-1}
\left[ J_{2.1,1} + 
(J_{2.1,3} - \delta_{zz}^{2.1,3} ) \delta G_{3,1}\right] + o(T^{-1}) = O((\log T)^7/T),\\
\delta G_{2,2}  & = & (T^{-1} \la \tilde z_{t,2.1}^\pi ,\tilde z_{t,2.1}^\pi\ra)^{-1} \left[
J_{2.1,3}\bar \Gamma_{3,2}\bar \Theta_2^{-2} - \delta_{zz}^{2.1,3}\bar \Gamma_{3,2} 
\right]  +o(T^{-1/2}),\\
\delta G_{3,1} & = & \bar S  [J_{3,1} - \delta_{zz}^{31}] + o(T^{-1}),\\
\bar P_{3,3}\delta G_{3,1}  & = &
(I - \la\tilde z_{t,3}^\pi, \tilde z_{t,3}^\pi\ra \bar \Gamma_{3,2}\bar \Gamma_{3,2}^\dagger)
[J_{3,1} -\delta_{zz}^{31}]  + o(T^{-1}) = O((\lT)^6/T), \\
\delta G_{3,2} & = & o(T^{-1/2})
\end{eqnarray*}
where $\tilde z_{t,2.1}^\pi = \tilde z_{t,2}^\pi - Z_{21}Z_{11}^{-1}\tilde z_{t,1}^\pi,
\delta_{zz}^{2.1,i} = \delta_{zz}^{2i} - Z_{21}Z_{11}^{-1} \delta_{zz}^{1i}, J_{2.1,1} = J_{1,1} - Z_{21}Z_{11}^{-1} J_{2,1}$,
$$
\bar S = (Z_{33} - \la \tilde z_{t,3}^\pi, \tilde y_{t,2}^\pi \ra
\la \tilde y_{t,2}^\pi, \tilde y_{t,2}^\pi \ra^{-1}\la \tilde y_{t,2}^\pi, \tilde z_{t,3}^\pi \ra)^{-1}
$$
using $Z_{33} = \la \tilde z_{t,3}^\pi , \tilde z_{t,3}^\pi \ra$ and
\begin{equation} \label{equ:defP33}
\bar P_{33} =
Z_{33} - Z_{33} \bar \Gamma_{3,2}^\dagger \bar \Gamma_{3,2}' 
Z_{33}.
\end{equation}
\end{lemma}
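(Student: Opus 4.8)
The plan is to obtain every block of $\delta G=\bar G-\bar\Gamma$ (as in Lemma~\ref{lem:linsvd}) by reading off the corresponding block of the two linear identities \eqref{equ:dg} and \eqref{eq:imgg}, after inserting the explicit block forms of $\bar\Phi,\bar\Psi,\bar\Gamma$ and the order bounds for $J$ and $\delta_{zz}$ from Lemma~\ref{lem:SVD}. Two structural facts drive the computation: first, $\bar\Phi$ and $\bar\Psi$ share the first block row $[Z_{11},Z_{12},0]$ and are block diagonal between the nonstationary indices $\{1,2\}$ and the stationary index $3$; second, $\bar R^2=\mathrm{diag}(\bar R_1^2,\bar R_2^2)$ with $\bar R_1^2\to I$ (the directions attached to the identity block of $\tilde b_r$) and $\bar R_2^2\to\bar\Theta_2^2$ staying bounded away from $I$. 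The normalization $\bar G_{1,1}=I=\bar\Gamma_{1,1}$ gives $\delta G_{1,1}=0$ immediately.

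For the stationary column block $j=2$ I would use row block $1$ together with the Schur combination ``row $2$ minus $Z_{21}Z_{11}^{-1}$ times row $1$'' of \eqref{equ:dg}. In that combination the $\bar\Phi$ contribution cancels and the $\bar\Psi\bar\Gamma(\bar R^2-\bar\Theta^2)$ term drops (its first two block rows collapse), leaving $-(Z_{22}-Z_{21}Z_{11}^{-1}Z_{12})\delta G_{2,2}\bar R_2^2$ on the left; since $Z_{22}-Z_{21}Z_{11}^{-1}Z_{12}=T^{-1}\la\tilde z_{t,2.1}^\pi,\tilde z_{t,2.1}^\pi\ra$ and $\bar R_2^2\to\bar\Theta_2^2$ is invertible, dividing out $\bar R_2^2$ produces the factor $\bar\Theta_2^{-2}$ and yields the stated $\delta G_{2,2}$ after retaining the leading terms $J_{2.1,3}\bar\Gamma_{3,2}$ and $\delta_{zz}^{2.1,3}\bar\Gamma_{3,2}$. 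The row-$1$ equation then gives $\delta G_{1,2}$ via the invertibility of $(I-\bar R_2^2)\to(I-\bar\Theta_2^2)$. For $\delta G_{3,2}$ the row-$3$ equation has homogeneous operator $X\mapsto\Phi_{33}X-Z_{33}X\bar\Theta_2^2$, which annihilates $\bar\Gamma_{3,2}$; I would split $\delta G_{3,2}$ into a kernel part fixed by $\bar G_{3,2}'S_{p,22}=I$ and a complementary part, note that every inhomogeneous term apart from $Z_{33}\bar\Gamma_{3,2}(\bar R_2^2-\bar\Theta_2^2)$ is $O(T^{-1})$, and absorb the latter into the singular-value shift via \eqref{eq:singval}, giving $\delta G_{3,2}=o(T^{-1/2})$.

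The delicate block is $j=1$, where $\bar R_1^2\to I$ makes the row-$1$ relation degenerate through $(I-\bar R_1^2)\to0$; this is exactly the degeneracy that invalidates the argument of \cite{Johansen} for $c_y>0$, and it is handled by the normalization $\delta G_{1,1}=0$ rather than by inversion. The Schur combination isolates $\delta G_{2,1}=(T^{-1}\la\tilde z_{t,2.1}^\pi,\tilde z_{t,2.1}^\pi\ra)^{-1}[J_{2.1,1}+(J_{2.1,3}-\delta_{zz}^{2.1,3})\delta G_{3,1}]=O(T^{-1})$, and the row-$3$ equation gives $\delta G_{3,1}=\bar S[J_{3,1}-\delta_{zz}^{31}]$ at leading order $T^{-1/2}$, where $\bar S^{-1}=Z_{33}-\Phi_{33}$. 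To reach the super-consistency statement I would apply the oblique projection $I-\bar\Pi$ with $\bar\Pi:=Z_{33}\bar\Gamma_{3,2}\bar\Gamma_{3,2}^\dagger$, which by \eqref{equ:defP33} satisfies $(I-\bar\Pi)Z_{33}=\bar P_{33}$ and selects the stationary block row of \eqref{eq:imgg}.

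This last step is the main obstacle. Using $Z_{33}\bar S-I=\Phi_{33}\bar S$ one gets $\bar P_{33}\delta G_{3,1}=(I-\bar\Pi)[J_{3,1}-\delta_{zz}^{31}]+(I-\bar\Pi)\Phi_{33}\bar S[J_{3,1}-\delta_{zz}^{31}]+o(T^{-1})$, so the claimed $O(T^{-1})$ bound needs two facts invisible from the orders alone. First, $(I-\bar\Pi)\Phi_{33}=O(T^{-1})$: since $\tilde y_{t,2}=\tilde b_{2,3}\tilde z_{t,3}+\tilde\ve_{t,2}$ with $\mathrm{rank}(\tilde b_{2,3})=n-c_y$, the population $\Phi_{33}$ has rank $n-c_y$ and range equal to that of $Z_{33}\bar\Gamma_{3,2}$, while the remaining sample canonical correlations are spurious noise correlations of squared size $O_P(T^{-1})$, so $I-\bar\Pi$ eliminates the signal directions exactly and leaves only these $O(T^{-1})$ pieces. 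Second, and harder, one must show $(I-\bar\Pi)[J_{3,1}-\delta_{zz}^{31}]=O(T^{-1})$ despite $J_{3,1}-\delta_{zz}^{31}=O(T^{-1/2})$: the $O(T^{-1/2})$ fluctuation is the sample covariation of the stationary directions (and their $\tilde y_{t,2}^\pi$-projections) with the integrated $\tilde z_{t,1}^\pi$, and, using the explicit forms of $J_{3,1}$ and $\delta_{zz}^{31}$ from Lemma~\ref{lem:SVD} together with the eigen-relation defining $\bar\Gamma_{3,2}$, it must be shown to lie asymptotically in $\mathrm{range}(Z_{33}\bar\Gamma_{3,2})$ so that $I-\bar\Pi$ removes it. This is the analytic heart of the $c_y>0$ case—the $T$-consistency of the reduced-rank estimator in the projected nonstationary directions—and it is what replaces the stationarity argument of \cite{Johansen} that is no longer available. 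The remaining work, propagating the almost-sure rates $a_T=b_T=(\log T)(\log\log T)^2/\sqrt T$ of Lemma~\ref{lem:SVD} through the products to obtain the explicit powers of $\log T$ in $O((\log T)^7/T)$ and $O((\log T)^6/T)$, is routine bookkeeping.
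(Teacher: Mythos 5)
Your block-by-block extraction of $\delta G$ from \eqref{equ:dg} and \eqref{eq:imgg}, using the shared first block row of $\bar\Phi$ and $\bar\Psi$, the Schur combination ``row $2$ minus $Z_{21}Z_{11}^{-1}$ row $1$'', the normalization $\delta G_{1,1}=0$, and the invertibility of $(I-\bar\Theta_2^2)$ for the $(1,2)$ block, is exactly the route the paper takes (the paper packages the Schur combination as premultiplication by $I-\bar\Psi\bar\Gamma\bar\Gamma^\dagger$, whose second block row is $[-Z_{21}Z_{11}^{-1},I,0]$, but the computation is the same). The handling of $\delta G_{3,2}$ via the kernel of $X\mapsto\Phi_{33}X-Z_{33}X\bar\Theta_2^2$ together with the selector normalization also matches the paper's argument with $\bar\Gamma_\perp$ and $S_{p,22}$.

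However, you leave the one step that carries the whole lemma unproved. You correctly isolate that the bound $\bar P_{3,3}\delta G_{3,1}=O((\lT)^6/T)$ requires $(I-\bar\Pi)[J_{3,1}-\delta_{zz}^{31}]$ to be of order $T^{-1}$ even though $J_{3,1}-\delta_{zz}^{31}=O(T^{-1/2})$, and you state that the fluctuation ``must be shown to lie asymptotically in $\mathrm{range}(Z_{33}\bar\Gamma_{3,2})$'' --- but you never show it. The paper closes this by an explicit identity: from Lemma~\ref{lem:SVD}(II) with $i=1$,
$J_{3,1}-\delta_{zz}^{31}=(\delta_{zy}^{31}-\delta_{zz}^{31})+\la\tilde z_{t,3}^\pi,\tilde y_{t,2}^\pi\ra\la\tilde y_{t,2}^\pi,\tilde y_{t,2}^\pi\ra^{-1}(\delta_{yz}^{21}-\delta_{yy}^{21})$; since $\tilde y_{t,1}^\pi-\tilde z_{t,1}^\pi=\tilde\ve_{t,1}^\pi$, the first summand equals $T^{-1/2}\la\tilde z_{t,3}^\pi,\tilde\ve_{t,1}\ra=O(Q_T/\sqrt{T})$ (almost surely small because the noise is uncorrelated with the regressors by construction), while $\sqrt{T}(\delta_{yz}^{21}-\delta_{yy}^{21})'\la\tilde y_{t,2}^\pi,\tilde y_{t,2}^\pi\ra^{-1}\to\Xi$, so the surviving $O(T^{-1/2})$ part is $T^{-1/2}\la\tilde z_{t,3}^\pi,\tilde y_{t,2}^\pi\ra\Xi'$ with $Z_{33}^{-1}\la\tilde z_{t,3}^\pi,\tilde y_{t,2}^\pi\ra=\Gamma_{3,2}\tO_2'+O(Q_T)$, and this is annihilated at leading order because $P\,\mE\tilde z_{t,3}^\Pi(\tilde z_{t,3}^\Pi)'\Gamma_{3,2}=0$ and $\bar P-P=O((\lT)^3/\sqrt{T})$. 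Without this chain the claimed rate for $\bar P_{3,3}\delta G_{3,1}$ (and hence the whole asymptotic distribution in Theorem~\ref{thm:RRR}) is unsupported. A secondary, more minor point: your auxiliary claim $(I-\bar\Pi)\Phi_{33}=O(T^{-1})$ is false as a matrix bound --- the cross terms between the signal range $Z_{33}\bar\Gamma_{3,2}$ and the noise directions make it only $O(Q_T)$ --- but this is harmless, since that factor is subsequently multiplied by $\delta G_{3,1}=O(T^{-1/2}(\lT)^3)$ and the product is still of the required order.
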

Next these approximations are linked to the 
estimate $\hat \beta_{RRR,r}$.
\begin{lemma} \label{lem:defhb}
Let the assumptions of Theorem~\ref{thm:RRR} hold.

(I)
Then
\begin{eqnarray*}
{\mathcal T}_y (\hat \beta_{RRR,r}-b_r) {\cal T}_z^{-1}D_z^{-1}
 & = & \left[ \sqrt{T} \la \tilde \ve_t, \tilde z_t^\pi \ra \tilde D_z \right] \bar \Gamma \bar \Gamma^\dagger
+ \left[ \begin{array}{cc} TI & 0 \\ 0 & \sqrt{T} \tO_2 \end{array} \right] \delta G'
( I- \bar M \bar G\bar G^\dagger)\\
& &
+ \left[ \begin{array}{ccc} 0 & 0 & 0 \\ 0 & 0 & \sqrt{T}[\tilde \beta_{2,3}-\bar \beta_{2,3}]
 [I- \mE \tilde z_{t,3}^\Pi(\tilde z_{t,3}^\Pi)'
\Gamma_{3,2}\Gamma_{3,2}^\dagger] \end{array} \right]+o_P(1)
\end{eqnarray*}
where
$$
\bar \beta_{2,3} = \la \tilde y_{t,2}^\pi, \tilde z_{t,3}^\pi \ra
\bar \Gamma_{3,2} \bar \Gamma_{3,2}^\dagger = \bar \O_2 \bar \Gamma_{3,2}' \to \tO_2\Gamma_{3,2}' = \tilde b_{2,3}
$$
denotes the solution to the subproblem of the problem (b) corresponding to the stationary components.
$\Gamma_{3,2}^\dagger := (\Gamma_{3,2}' \mE \tilde z_{t,3}^\pi (\tilde z_{t,3}^\pi)' \Gamma_{3,2})^{-1}\Gamma_{3,2}'$. 

(II)
Letting $\delta G_{:,1}$ and $\delta G_{:,2}$ denote the first and second block column of
$\delta G$ it holds that
\begin{eqnarray}
T\delta G_{:,1}'(I-\bar M\bar G \bar G^\dagger) & = & \left[ -T\delta H' Z_{21} Z_{11}^{-1} \quad T\delta H' \quad
T\delta G_{3,1}'\bar P \right] + o(1), \label{equ:tdeltag}\\
\sqrt{T}\delta G_{:,2}'(I-\bar M \bar G\bar G^\dagger) & = & \sqrt{T}\delta G_{2,2}'
\left[ - Z_{21} Z_{11}^{-1} \quad I \quad 0 \right] + o(1) \label{equ:sqrttdeltag}
\end{eqnarray}
where $\delta H = \delta G_{2,1} - \delta G_{2,2}(\Gamma_{3,2}^\dagger)'\mE \tilde z_{t,3}^\Pi (\tilde z_{t,3}^\Pi)'\delta G_{3,1}$
The term $o_P(1)$ is also $O((\log T)^a )$.
\end{lemma}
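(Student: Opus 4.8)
The plan is to peel the normalized estimation error off the closed form of the reduced rank estimator into the three displayed pieces. Writing $M_z=\la\tilde z_t^\pi,\tilde z_t^\pi\ra$ and using $\tilde G=\tilde D_z\bar G$ together with $\bar M=\tilde D_z M_z\tilde D_z$, the estimator is ${\mathcal T}_y\hat\beta_{RRR,r}{\cal T}_z^{-1}=\la\tilde y_t^\pi,\tilde z_t^\pi\ra\tilde G(\tilde G'M_z\tilde G)^{-1}\tilde G'$. Substituting the exact regression identity $\tilde y_t^\pi=OG'\tilde z_t^\pi+\tilde\ve_t^\pi$ from Theorem~\ref{thm:repr} splits this into a noise part $\la\tilde\ve_t^\pi,\tilde z_t^\pi\ra\tilde G(\tilde G'M_z\tilde G)^{-1}\tilde G'$ and a signal part $OG'M_z\tilde G(\tilde G'M_z\tilde G)^{-1}\tilde G'$, and subtracting $b_r$ affects only the signal part. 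Post-multiplying by $D_z^{-1}=T^{1/2}\tilde D_z^{-1}$, using $\tilde G'D_z^{-1}=T^{1/2}\bar G'$ and $\la\tilde\ve_t^\pi,\tilde z_t^\pi\ra=\la\tilde\ve_t,\tilde z_t^\pi\ra$, the noise part becomes $[\sqrt{T}\la\tilde\ve_t,\tilde z_t^\pi\ra\tilde D_z]\,\bar G(\bar G'\bar M\bar G)^{-1}\bar G'$.

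First I would treat the signal part. With $\bar\Pi:=\bar M\bar G(\bar G'\bar M\bar G)^{-1}\bar G'=\bar M\bar G\bar G^\dagger$ and the conjugation $M_z\tilde G(\tilde G'M_z\tilde G)^{-1}\tilde G'=\tilde D_z^{-1}\bar\Pi\tilde D_z$, the $D_z^{-1}$ scaling yields $-\left[\begin{array}{cc} TI & 0 \\ 0 & \sqrt{T}\tO_2 \end{array}\right]G'(I-\bar M\bar G\bar G^\dagger)$, after factoring $T^{1/2}OG'\tilde D_z^{-1}=\left[\begin{array}{cc} TI & 0 \\ 0 & \sqrt{T}\tO_2 \end{array}\right]G'$ from the explicit block forms of $O$ and $G$. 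The decisive identity is $\bar G'(I-\bar M\bar G\bar G^\dagger)=0$, which holds because $\bar G'\bar M\bar G(\bar G'\bar M\bar G)^{-1}=I$; combined with $G=\bar G-\delta G$ it turns $G'(I-\bar M\bar G\bar G^\dagger)$ into $-\delta G'(I-\bar M\bar G\bar G^\dagger)$ and produces exactly the second displayed term. The delicate point is that the naive orders of the blocks of $\delta G$ are too large to carry the blow-up factors $TI$ and $\sqrt{T}\tO_2$; it is only after projection by $(I-\bar M\bar G\bar G^\dagger)$ that the components of $\delta G$ along $\Gamma_{3,2}$ are annihilated and the survivors are $O_P(1)$. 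This cancellation is precisely the content of Part~(II), which is why the statement is split in two.

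Next the noise part must be reconciled with the first term $[\sqrt{T}\la\tilde\ve_t,\tilde z_t^\pi\ra\tilde D_z]\bar\Gamma\bar\Gamma^\dagger$. Since $\bar M$ and $\bar\Psi$ agree up to $\delta_{zz}=O_P(T^{-1/2})$ (Lemma~\ref{lem:SVD}) and both $\bar G,\bar\Gamma$ converge to $\Gamma$ with $O_P(T^{-1/2})$ deviation, while the prefactor $\sqrt{T}\la\tilde\ve_t,\tilde z_t^\pi\ra\tilde D_z$ is $O_P(1)$ blockwise (integrated blocks $O(\lT)$, stationary block $O(\sqrt{T}Q_T)$ by Lemma~\ref{lem:bound}), replacing $\bar G(\bar G'\bar M\bar G)^{-1}\bar G'$ by the decoupled $\bar\Gamma\bar\Gamma^\dagger$ is legitimate up to $o_P(1)$ in every block except the output–stationary block. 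There the genuine $O_P(1)$ discrepancy between the full and the decoupled stationary projection is isolated and re-expressed through the decoupled subproblem solution $\bar\beta_{2,3}=\la\tilde y_{t,2}^\pi,\tilde z_{t,3}^\pi\ra\bar\Gamma_{3,2}\bar\Gamma_{3,2}^\dagger$ and the corresponding full coefficient $\tilde\beta_{2,3}$; the limiting identity $\Gamma_{3,2}'[I-\mE\tilde z_{t,3}^\Pi(\tilde z_{t,3}^\Pi)'\Gamma_{3,2}\Gamma_{3,2}^\dagger]=0$ then shows that only the $P$-projected part $\sqrt{T}[\tilde\beta_{2,3}-\bar\beta_{2,3}]P$ remains, giving the third term. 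Collecting the three contributions and bounding remainders by the rates of Lemmas~\ref{lem:bound} and~\ref{lem:SVD} establishes Part~(I).

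For Part~(II) I would carry out a direct block expansion of $\delta G'(I-\bar M\bar G\bar G^\dagger)$, inserting the block formulas for $\delta G_{i,j}$ from Lemma~\ref{lem:asydeltaG} (notably $\delta G_{1,1}=0$, $\delta G_{3,2}=o(T^{-1/2})$, and the expressions for $\delta G_{2,1},\delta G_{2,2},\delta G_{3,1}$) together with the nearly block-diagonal structure of $\bar M$ and $\bar G$. Multiplying out and retaining terms to the stated order, the first block column (scale $T$) collapses to the combination $\delta H=\delta G_{2,1}-\delta G_{2,2}(\Gamma_{3,2}^\dagger)'\mE\tilde z_{t,3}^\Pi(\tilde z_{t,3}^\Pi)'\delta G_{3,1}$ in the first two $z$-blocks, while the third $z$-block reduces to $\delta G_{3,1}'\bar P$ because the projector removes the $\Gamma_{3,2}$-span exactly as encoded in $\bar P_{33}$; for the second block column (scale $\sqrt{T}$) only $\delta G_{2,2}'$ survives and the stationary $z$-block is $o(1)$ since $\delta G_{3,2}=o(T^{-1/2})$. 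The main obstacle throughout is verifying that these projector-induced cancellations hold at the claimed orders rather than the generic larger ones; this rests entirely on the sharp a.s.\ bounds of Lemma~\ref{lem:asydeltaG} and on the $O_P(1)$ blockwise behaviour of the relevant sample moments, and it is exactly this gain that lets the factors $TI$ and $\sqrt{T}\tO_2$ produce finite limits.
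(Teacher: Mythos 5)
Your overall architecture for Part (I) --- the exact signal/noise split via $\tilde y_t^\pi=\tilde b_r\tilde z_t^\pi+\tilde\ve_t^\pi$, the rescaling that turns the sandwich into $\bar M\bar G\bar G^\dagger$, and the projection identity $\bar G'(I-\bar M\bar G\bar G^\dagger)=0$ --- is the same as the paper's and correctly produces the first two displayed terms. The genuine gap is your account of the third term. You attribute it to the noise part, as a ``genuine $O_P(1)$ discrepancy'' in the output--stationary block when $\bar G(\bar G'\bar M\bar G)^{-1}\bar G'$ is replaced by $\bar\Gamma\bar\Gamma^\dagger$. But by your own bookkeeping the prefactor $\sqrt{T}\la\tilde\ve_t,\tilde z_t^\pi\ra\tilde D_z$ is $O_P(1)$ blockwise and $\bar G\bar G^\dagger-\bar\Gamma\bar\Gamma^\dagger=O_P(T^{-1/2})$ (since $\bar G-\bar\Gamma$ and $\bar M-\bar\Psi$ are both $O_P(T^{-1/2})$ with bounded inverses of the Gram matrices), so the replacement error in the noise part is $o_P(1)$ in \emph{every} block, including the output--stationary one. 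Since the third term converges to a nondegenerate Gaussian (Lemma~\ref{lem:asydist}), it cannot be recovered from an $o_P(1)$ remainder: as structured, your proof loses it.

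The term in fact lives in your signal part, and the issue is which $\delta G$ you are using. Your identity $G'(I-\bar M\bar G\bar G^\dagger)=-(\bar G-G)'(I-\bar M\bar G\bar G^\dagger)$ is exact for $\delta G:=\bar G-G=\bar G-\Gamma$, but the $\delta G$ appearing in the lemma and in Lemma~\ref{lem:asydeltaG} (whose bound $\delta G_{3,2}=o(T^{-1/2})$ your Part (II) needs) is $\bar G-\bar\Gamma$, where $\bar\Gamma$ solves the finite-sample decoupled problem (b) and $\bar\Gamma_{3,2}-\Gamma_{3,2}=O_P(T^{-1/2})$ is \emph{not} negligible at the $\sqrt T$ scale. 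Splitting $(\bar G-\Gamma)'=(\bar G-\bar\Gamma)'+(\bar\Gamma-\Gamma)'$, the extra piece in the stationary block row is
$$
\sqrt{T}\,\tO_2\,(\bar\Gamma_{3,2}-\Gamma_{3,2})'\,(I-\la \tilde z_{t,3}^\pi,\tilde z_{t,3}^\pi\ra\bar\Gamma_{3,2}\bar\Gamma_{3,2}^\dagger)+o_P(1),
$$
which, using $\bar\Gamma_{3,2}'(I-\la \tilde z_{t,3}^\pi,\tilde z_{t,3}^\pi\ra\bar\Gamma_{3,2}\bar\Gamma_{3,2}^\dagger)=0$, equals $\sqrt{T}[\bar\beta_{2,3}-\tilde b_{2,3}]$ times that projector up to $o_P(1)$ --- precisely the third displayed term; the paper reaches the same place by splitting $\tilde b_{2,3}=(\tilde b_{2,3}-\bar\beta_{2,3})+\bar\beta_{2,3}$ inside the second block row of the signal part. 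If instead you insist on $\delta G=\bar G-\Gamma$, your second term silently absorbs the third, but then \eqref{equ:sqrttdeltag} fails for that $\delta G$ because $\sqrt{T}\delta G_{3,2}$ is no longer $o(1)$. Either way the limiting identity $\Gamma_{3,2}'[I-\mE\tilde z_{t,3}^\Pi(\tilde z_{t,3}^\Pi)'\Gamma_{3,2}\Gamma_{3,2}^\dagger]=0$ that you invoke is being applied to the wrong piece of the decomposition, and the bookkeeping must be repaired before Parts (I) and (II) are mutually consistent.
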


Part (I) of the lemma splits the estimation error $\hat \beta_{RRR,r}-b_r$ (up to errors of higher order) into three terms: The first term asymptotically equals
the estimation error in the situation that the row space of $b_r$ is known and used in the estimation
to obtain unrestricted least squares estimators.
The second term
accounts for the effects of the rank restriction in the nonstationary components
of both the output (i.e. $\tilde y_{t,1}^\pi$) as well as the regressors (i.e. $\tilde z_{t,i}^\pi, i=1,2$).
Part (II) of the lemma provides a more detailed expression for this term. 
The last term corrects for the rank restriction in the stationary directions.

Up to now only for showing
$
\sqrt{T}\delta G_{3,1}'
(I-  \la \tilde z_{t,3}^\pi, \tilde z_{t,3}^\pi \ra \bar \Gamma_{3,2}\bar \Gamma_{32}^\dagger) \to 0$
the exact form 
of $\delta_{zz}$ and $J$ are used. All other results up to now rely only on
the order of convergence of these terms.
The proof of Theorem~\ref{thm:RRR} is completed with the last lemma of this section
giving explicit expressions for the asymptotic distributions of the various parts
of the expressions for ${\mathcal T}_y(\hat \beta_{RRR,r} - b_r){\mathcal T}_{z,r}^{-1} D_{z,r}^{-1}$ given in
Lemma~\ref{lem:defhb}. The result then follows directly from~\eqref{eq:defbetau}.
\begin{lemma} \label{lem:asydist}
With $\tO_2^\dagger =
(\tO_2' (\mE \tilde y_{t,2}^\Pi (\tilde y_{t,2}^\Pi)')^{-1}\tO_2)^{-1} \tO_2'(\mE \tilde y_{t,2}^\Pi(\tilde y_{t,2}^\Pi)')^{-1}$ we have
\begin{eqnarray*}
\la \tilde \ve_{t}, \tilde z_{t,1}^\pi \ra Z_{11}^{-1} & \stackrel{d}{\to} & f({\mathcal T}_y \Lambda W,
W_{z,1}^\Pi), \\
\sqrt{T}\la \tilde \ve_t , \tilde z_{t,3}^\pi \ra \la \tilde z_{t,3}^\pi , \tilde z_{t,3}^\pi \ra^{-1} & \stackrel{d}{\to} & Z_r \sim  {\cal N}(0,V), \\
\sqrt{T}\delta G_{2,2} & = & (T^{-1}\la \tilde z_{t,2.1}^\pi, \tilde z_{t,2.1}^\pi \ra)^{-1}
\la \tilde z_{t,2.1}^\pi ,\tilde \ve_{t,2} \ra (\tO_2^\dagger)' + o_P(1) \stackrel{d}{\to} M_{r,2}'(\tO_2^\dagger)',\\
T\bar P'\delta G_{3,1} & = & Z_{33}^{-1}P \sqrt{T}\la \tilde z_{t,3}^\pi, \tilde \ve_{t,1}\ra  %\bar \Gamma_{11}
+ Z_{33}^{-1}\sqrt{T}\left(\bar P\la \tilde z_{t,3}^\pi ,\tilde y_{t,2} \ra  -P\mE \tilde z_{t,3}^\Pi (\tilde y_{t,2}^\Pi)' \right)\Xi' %\bar{\Gamma}_{11}
+ o_P(1) \to \tilde R',\\
T\delta H  & = & (T^{-1}\la \tilde z_{t,2.1}^\pi, \tilde z_{t,2.1}^\pi \ra)^{-1}
\left[ \la \tilde z_{t,2.1}^\pi , \tilde \ve_{t,1} \ra +
\la \tilde z_{t,2.1}^\pi , \tilde \ve_{t,2}^\pi \ra (I-\tO_2 \tO_2^\dagger)'\Xi' \right] + o_P(1)\stackrel{d}{\to} N',\\
\sqrt{T}[ \bar \beta_{2,3} - \tilde \beta_{2,3} ]P & = &
\tO_2\tO_2^\dagger  \sqrt{T}\la \tilde \ve_{t,2}, \tilde z_{t,3}^\Pi \ra
(\mE \tilde z_{t,3}^\Pi (\tilde z_{t,3}^\Pi)')^{-1}P+ o_P(1),
\end{eqnarray*}
where $P = (I - \mE \tilde z_{t,3}^\Pi (\tilde z_{t,3}^\Pi)' \Gamma_{3,2}\Gamma_{3,2}^\dagger)$.
Further $Y_{i1}= \int W_{z,i}^\Pi(W_{z,1}^\Pi)'$. Here $M_{r,2} = [0,I] M_r$ with $M_r := f({\mathcal T}_y\Lambda W,W_{z,2.1}^\Pi)$ where $W_{z,2.1} := W_{z,1}^\Pi - Y_{21}Y_{11}^{-1}W_{z,1}^\Pi$.
$N = [[I,0]+\Xi(I-\tO_2\tO_2^\dagger)[0,I]] M_r$. 
$T\bar P'\delta G_{3,1}$ and $\sqrt{T}[ \tilde \beta_{2,3} - \beta_2 ]P$ converge in distribution
to Gaussian random variables with mean zero.
\end{lemma}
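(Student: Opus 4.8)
The plan is to dispatch the six displayed relations one at a time, in each case first reducing the object to a fixed weighting matrix times a sample second moment of the innovation $\tilde\ve_t = {\mathcal T}_y\Lambda\ve_t$ against the residualized regressors $\tilde z_{t,i}^\pi$, using the block expansions of $J$ and $\delta_{zz}$ from Lemma~\ref{lem:SVD} and the approximations of Lemma~\ref{lem:asydeltaG}, and only then reading off the limit from the stochastic-integral and central-limit results of Lemma~\ref{lem:bound}(II),(IV) via the continuous mapping theorem. The first two lines are immediate. In $\la\tilde\ve_t,\tilde z_{t,1}^\pi\ra Z_{11}^{-1}$ the numerator is an innovation--integrated cross moment; because $\nu_t$ loads only on lagged $\ve$'s the one-sided correction of Lemma~\ref{lem:bound}(II) vanishes, so $\la\tilde\ve_t,\tilde z_{t,1}^\pi\ra\stackrel{d}{\to}\int d({\mathcal T}_y\Lambda W)(W_{z,1}^\Pi)'$ while $Z_{11}\to\int W_{z,1}^\Pi(W_{z,1}^\Pi)'$, giving $f({\mathcal T}_y\Lambda W,W_{z,1}^\Pi)$; the second line is its stationary counterpart, where $\sqrt T\la\tilde\ve_t,\tilde z_{t,3}^\pi\ra$ is asymptotically normal by the martingale-difference CLT and $\la\tilde z_{t,3}^\pi,\tilde z_{t,3}^\pi\ra\to\mE\tilde z_{t,3}^\Pi(\tilde z_{t,3}^\Pi)'$, which identifies $Z_r$.

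The three middle identities are the substance of the lemma. Here I would insert the representation of Theorem~\ref{thm:repr}, namely $\tilde y_{t,1}=\tilde z_{t,1}+\tilde\ve_{t,1}$ and $\tilde y_{t,2}=\tO_2\Gamma_{3,2}'\tilde z_{t,3}+\tilde\ve_{t,2}$, into the block formulae of Lemma~\ref{lem:SVD}. The effect is that combinations such as $\la\cdot,\tilde z_{t,1}^\pi-\tilde y_{t,1}^\pi\ra$ collapse to $-\la\cdot,\tilde\ve_{t,1}^\pi\ra$ and $\tilde y_{t,2}^\pi-\tO_2\Gamma_{3,2}'\tilde z_{t,3}^\pi$ collapses to $\tilde\ve_{t,2}^\pi$, so that the nuisance second moments among the $\tilde z$'s cancel and only cross moments with $\tilde\ve_t$ remain. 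The projection factors then emerge as limits of the surviving weighting matrices: $\la\tilde y_{t,2}^\pi,\tilde y_{t,2}^\pi\ra^{-1}\la\tilde y_{t,2}^\pi,\tilde\ve_{t,1}\ra\to-\Xi'$ (using $\mE\ve_t\tilde z_{t,3}'=0$, so that $\mE\tilde y_{t,2}^\Pi\tilde\ve_{t,1}'=\mE\tilde\ve_{t,2}\tilde\ve_{t,1}'$ and the stated $\Xi$ appears), the factors $(\tO_2^\dagger)'$ and $I-\tO_2\tO_2^\dagger$ arise from the weighting $\la\tilde y_{t,2}^\pi,\tilde y_{t,2}^\pi\ra^{-1}$ combined with the eigenvector normalization $\bar\Gamma_{3,2}\bar\Theta_2^{-2}$ of subproblem (b), and $I-Z_{33}\bar\Gamma_{3,2}\bar\Gamma_{3,2}^\dagger$ converges to $P$. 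After these reductions $\sqrt T\delta G_{2,2}$, $T\bar P'\delta G_{3,1}$ and $T\delta H$ are each such a weighting matrix times a $\tilde z$--$\tilde\ve$ cross moment, and the claimed limits $M_{r,2}'(\tO_2^\dagger)'$, $\tilde R'$ and $N'$ follow from Lemma~\ref{lem:bound}(II), with $M_r=f({\mathcal T}_y\Lambda W,W_{z,2.1}^\Pi)$ obtained as the limit of $(T^{-1}\la\tilde z_{t,2.1}^\pi,\tilde z_{t,2.1}^\pi\ra)^{-1}\la\tilde z_{t,2.1}^\pi,\tilde\ve_t\ra$.

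For the final identity I would expand $\bar\beta_{2,3}=\bar\O_2\bar\Gamma_{3,2}'$ about $\tilde\beta_{2,3}=\tO_2\Gamma_{3,2}'$. The decisive simplification is the algebraic identity $\Gamma_{3,2}'P=0$, which follows directly from the definitions of $P$ and $\Gamma_{3,2}^\dagger$: postmultiplying by $P$ annihilates $\tilde\beta_{2,3}$ itself and all contributions aligned with the population row space $\Gamma_{3,2}'$, so that the surviving $O(T^{-1/2})$ term is driven by the eigenvector perturbation $\bar\Gamma_{3,2}-\Gamma_{3,2}$. Inserting the perturbation formulae~\eqref{equ:dg}--\eqref{eq:imgg} of Lemma~\ref{lem:linsvd}, which express this difference through $\tO_2^\dagger$ and the innovation cross moment $\la\tilde\ve_{t,2},\tilde z_{t,3}^\Pi\ra$, and combining with the prefactor $\tO_2$ reproduces the Reinsel-type form $\tO_2\tO_2^\dagger\sqrt T\la\tilde\ve_{t,2},\tilde z_{t,3}^\Pi\ra(\mE\tilde z_{t,3}^\Pi(\tilde z_{t,3}^\Pi)')^{-1}P$. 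Finally, the mean-zero Gaussianity of $T\bar P'\delta G_{3,1}$ and of $\sqrt T[\bar\beta_{2,3}-\tilde\beta_{2,3}]P$ is then immediate, since after the reductions above each is a fixed affine image of the jointly asymptotically normal martingale-difference averages $\sqrt T\la\tilde z_{t,3}^\pi,\tilde\ve_t\ra$ covered by Lemma~\ref{lem:bound}(II).

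I expect the genuine obstacle to be the middle block: carefully matching the $T^{-1}$- and $T^{-1/2}$-scaled sub-blocks of $J$ and $\delta_{zz}$ and verifying that their combinations telescope down to single innovation cross moments carrying exactly the projections $\tO_2^\dagger$, $I-\tO_2\tO_2^\dagger$, $\Xi$ and $P$. This is lengthy and the transposes and scalings must be tracked with care; by contrast, once the exact $o_P(1)$-expressions are in hand the distributional conclusions are routine invocations of the limit theorems already established in Lemma~\ref{lem:bound}.
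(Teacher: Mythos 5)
Your proposal follows essentially the same route as the paper: reduce each quantity, via the block expansions of Lemmas~\ref{lem:SVD} and~\ref{lem:asydeltaG} and the representation $\tilde y_{t,2}^\pi = \tO_2\Gamma_{3,2}'\tilde z_{t,3}^\pi + \tilde \ve_{t,2}^\pi$, to fixed weightings of innovation cross moments, exploit the annihilation identities $\Gamma_{3,2}'P=0$ and $P\,\mE \tilde z_{t,3}^\Pi(\tilde z_{t,3}^\Pi)'\Gamma_{3,2}=0$ together with $\Theta_2^2 = \tO_2'(\mE \tilde y_{t,2}^\Pi(\tilde y_{t,2}^\Pi)')^{-1}\tO_2(\Gamma_{3,2}'Z_{33}\Gamma_{3,2})$, and then invoke the limit theorems of Lemma~\ref{lem:bound}. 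The only deviations are minor: the paper derives the perturbation $\bar\Gamma_{3,2}-\Gamma_{3,2}$ for the last claim from Lemma~\ref{lem:dKp} rather than from \eqref{equ:dg}--\eqref{eq:imgg} (equivalent first-order expansions), and in $T\bar P'\delta G_{3,1}$ one surviving term is the $\sqrt{T}$-fluctuation of $\bar P\la \tilde z_{t,3}^\pi,\tilde y_{t,2}\ra$ about $P\,\mE\tilde z_{t,3}^\Pi(\tilde y_{t,2}^\Pi)'$ --- a stationary CLT term rather than literally a $\tilde z$--$\tilde\ve$ cross moment --- though it is still asymptotically Gaussian as you conclude.
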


Combining Lemma~\ref{lem:defhb} and ~\ref{lem:asydist}
we obtain that ${\mathcal T}_y(\hat \beta_{RRR,r}-b_r){\mathcal T}_{z,r}^{-1}D_z^{-1} \stackrel{d}{\to}$
$$
\left[ f({\mathcal T}_y\Lambda W,W_{z,1}^\Pi),0, Z_r Z_{33}\Gamma_{3,2}\Gamma_{3,2}^\dagger \right] +
\left[ \begin{array}{ccc} -N Y_{21}Y_{11}^{-1} & N & \tilde R \\
-\tO_2\tO_2^\dagger M_{r,2} Y_{21}Y_{11}^{-1} & \tO_2\tO_2^\dagger M_{r,2} & \tO_2\tO_2^\dagger Z_{r,2}P \end{array} \right].
$$

From this the claim follows using the block matrix inversion since ${\mathcal T}_y (\hat \beta_{OLS,r}-b_r){\mathcal T}_z^{-1}D_z^{-1}
\stackrel{d}{\to} [ f({\mathcal T}_y \Lambda W,W_z^\Pi), Z_r ]$.

Using standard asymptotics for the term $\la \ve_t, z_t^u \ra\la z_t^u, z_t^u\ra^{-1}$ we obtain the asymptotic distribution 
of $\tilde \beta_{RRR,u} - \tilde \beta_{OLS,u}$.
stated in Theorem~\ref{thm:RRR} from~\eqref{eq:defbetau}.
Note in particular that
$R :=  Z_{r,1}P - \tilde R$
where $ Z_{r,1}$ denotes the limit of
$\sqrt{T}\la \tilde \ve_{t,1},\tilde z_{t,3}^\Pi\ra Z_{33}^{-1}$.
This concludes the proof of (I) of Theorem~\ref{thm:RRR}. 

Following the arguments of the proof and using Lemma~\ref{lem:bound} (IV) it follows that the
usual changes occur if a constant (and a deterministic trend respectively) is included in the
regression: The asymptotics in the stationary directions are unchanged. For the
nonstationary directions the Brownian motions are replaced by their corresponding
demeaned (and detrended respectively) versions. We omit details in this respect.

\subsection{Proof of Lemma~\ref{lem:asydeltaG}} \label{proof:asy}
In the proof the following results are used:
With $\Xi := -\mE \tilde \ve_{t,1} \tilde y_{t,2}' (\mE \tilde y_{t,2}^\Pi (\tilde y_{t,2}^\Pi)')^{-1},
\tilde z_{t,2.1} = \tilde z_{t,2} - Z_{21}Z_{11}^{-1} \tilde z_{t,1}, J_{2.1,1} = J_{2,1}-Z_{21}Z_{11}^{-1}J_{1,1},
J_{2.1,3} = J_{2,3}-Z_{21}Z_{11}^{-1}J_{1,3}$
and $\tilde \ve_{t,1.2} = \tilde \ve_{t,1} + \Xi \tilde y_{t,2}^\Pi$ it holds that
\begin{eqnarray*}
\sqrt{T} [\delta_{yz}^{21} - \delta_{yy}^{21}]'\la \tilde y_{t,2}^\pi, \tilde y_{t,2}^\pi \ra^{-1} & = &
\Xi + o(1), \\
\sqrt{T} [J_{3,1} - \delta_{zz}^{3,1} ] & = & \mE \tilde z_{t,3}^\Pi (\tilde y_{t,2}^\Pi)'\Xi' + o_P(1), \\
TJ_{2.1,1} & = & \la \tilde z_{t,2.1}^\pi, \tilde \ve_{t,1.2}\ra + o_P(1), \\
\sqrt{T}J_{2.1,3}
& = &
\la \tilde z_{t,2.1}^\pi,\tilde y_{t,2}^\pi \ra (\mE \tilde y_{t,2}^\Pi (\tilde y_{t,2}^\Pi)')^{-1}\tO_{2}\Gamma_{3,2}'Z_{33} + o_P(1).
\end{eqnarray*}
where $Z_{33} := \mE \tilde z_{t,3}^\Pi (\tilde z_{t,3}^\Pi)'$.

The first claim follows from
$$
\sqrt{T}[\delta_{yz}^{21} - \delta_{yy}^{21}]= \la \tilde y_{t,2}^\pi , \tilde z_{t,1}^\pi - \tilde y_{t,1}^\pi \ra
= -\la \tilde y_{t,2}^\pi , \tilde \ve_{t,1} \ra \to -\mE \tilde \ve_{t,2}  \tilde \ve_{t,1}'
$$
where $\tilde y_{t,2}^\pi = \tilde \ve_{t,2} +  \tilde b_{2,3} \tilde z_{t,3} - \la \tilde \ve_{t,2} + \tilde b_{2,3} \tilde z_{t,3}, z_t^u \ra
\la z_t^u, z_t^u \ra^{-1} z_t^u$.
Then the result follows straightforwardly since
$\tilde z_{t,3}$ and $\tilde \ve_{t,1}$ are stationary and uncorrelated by assumption.
From this also the second and third claim follow immediately using the expressions for $J_{i,1}$ derived in Lemma~\ref{lem:asydeltaG}.
The fourth claim also follows from these expressions noting that $\mE \tilde y_{t,2}^\Pi (\tilde z_{t,3}^\Pi)' =
\tO_2\Gamma_{3,2}'\mE \tilde z_{t,3}^\Pi (\tilde z_{t,3}^\Pi)'$.\\
Now with respect to the blocks of $\delta G$ note that due to the chosen normalizations $\bar \Gamma_{1,1}= \bar G_{1,1} = I$ implying $\delta G_{1,1}=0$.

Using the order of convergence for $J, \delta_{zz}$ (Lemma~\ref{lem:SVD} (I) and (II)) 
and $\delta G$ (Lemma~\ref{lem:SVD} (III)) the (1,2) block of~\eqref{equ:dg} implies
$$
\delta G_{1,2}  + Z_{11}^{-1}Z_{12} \delta G_{2,2}
= Z_{11}^{-1}
[\delta_{zz}^{13}\bar G_{3,2}\bar R_2^2 - J_{1,3}\bar G_{3,2}](I-\bar R_2^2)^{-1} + o(T^{-1}).
$$
The expression given %in the statement lemma 
then follows from noting that
$\delta_{zz}^{13} =O((\log T)^a / T^{1/2}),\bar G_{3,2} = \bar \Gamma_{3,2} + O((\log T)^3 / T^{1/2}),
J_{1,3}=O((\log T)^3 / T^{1/2})$ and
$\bar R_2^2 = \bar \Theta_2^2 + O((\log T)^3 / T^{1/2})$. \\
The expressions for $\delta G_{2,1}$ and $\delta G_{2,2}$ follow from the second block row of
equation~\eqref{eq:imgg} noting that
\begin{eqnarray*}
(I_m - \bar \Psi\bar \Gamma^\dagger\bar \Gamma') & =& 
\left[ \begin{array}{ccc} 0 & 0 & 0 \\ -Z_{21}Z_{11}^{-1} & I & 0 \\ 0 & 0 & I - \bar \Psi_{3,3}\bar \Gamma_{3,2}
(\bar \Gamma_{3,2}'\bar \Psi_{3,3}\bar \Gamma_{3,2})^{-1}\bar \Gamma_{3,2}'
\end{array} \right], \\
(I_m - \bar \Psi\bar \Gamma\bar \Gamma^\dagger)\bar \Psi & = & \left[ \begin{array}{ccc} 0 & 0 & 0 \\ 0 &
T^{-1}\la \tilde z_{t,2.1}^\pi, \tilde z_{t,2.1}^\pi\ra & 0 \\
0 & 0 & \bar P_{33} \end{array}\right].
\end{eqnarray*}
Also $\bar R_{1}^2 - I = O((\lT)^7/T)$ follows from the $(1,1)$ entry of~\eqref{equ:dg}. 
Then the $(3,1)$ entry of equation~\eqref{equ:dg} implies that
$$
\delta G_{3,1} = \bar S [J_{3,1} - \delta_{zz}^{31}] + o(T^{-1})
$$
where $\bar S^{-1}\to S^{-1}$ as 
$$
\la \tilde z_{t,3}^\pi, \tilde z_{t,3}^\pi \ra - \la \tilde z_{t,3}^\pi, \tilde y_{t,2}^\pi \ra
\la \tilde y_{t,2}^\pi, \tilde y_{t,2}^\pi \ra^{-1}\la \tilde y_{t,2}^\pi, \tilde z_{t,3}^\pi \ra
\to \mE \tilde z_{t,3}^\Pi (\tilde z_{t,3}^\Pi)' - \mE \tilde z_{t,3}^\Pi (\tilde y_{t,2}^\Pi)'
(\mE \tilde y_{t,2}^\Pi (\tilde y_{t,2}^\Pi)')^{-1}\mE \tilde y_{t,2}^\Pi(\tilde z_{t,3}^\Pi)'>0
$$
%
%under the assumption that $\sigma_{c_y+1} < 1$, 
which is ensured by the
assumed nonsingularity of the
covariance of $[(\tilde y_{t,2}^\Pi)',(\tilde z_{t,3}^\Pi)']'$.
Further the $(3,1)$ entry in equation~\eqref{eq:imgg} directly implies the expression for
$\bar P_{3,3}\delta G_{3,1}$ using the orders of convergence derived above.
Next 
$$
J_{3,1} - \delta_{zz}^{31} = \delta_{zy}^{31} - \delta_{zz}^{31} + \la \tilde z_{t,3}^\pi, \tilde y_{t,2}^\pi \ra
\la \tilde y_{t,2}^\pi, \tilde y_{t,2}^\pi \ra^{-1}( \delta_{yz}^{21} - \delta_{yy}^{21}) 
= T^{-1/2} [ \la \tilde z_{t,3}^\pi, \tilde \varepsilon_{t,1} \ra + \la \tilde z_{t,3}^\pi, \tilde y_{t,2}^\pi \ra ( \Xi' + o(1))].
$$ 

Note that $\la \tilde z_{t,3}^\pi, \tilde \varepsilon_{t,1} \ra = O(Q_T), Z_{33}^{-1}\la \tilde z_{t,3}^\pi, \tilde y_{t,2}^\pi\ra = \Gamma_{3,2}\tO_2' + O(Q_T), \bar P - P = O((\lT)^3/T^{1/2}))$
and thus $\sqrt{T} \bar P_{3,3}\delta G_{3,1} =$
$$
= \sqrt{T}\bar P(J_{31}-\delta_{zz}^{31}) + o(T^{-1/2}) =  \bar P \la \tilde z_{t,3}^\pi, \tilde \varepsilon_{t,1} \ra + \bar P \la \tilde z_{t,3}^\pi, \tilde y_{t,2}^\pi \ra ( \Xi' + o(1))+ o(T^{-1/2})  = O((\lT)^6/T^{1/2}).
$$

Here the last order follows from $P \mE \tilde z_{t,3}^\Pi (\tilde z_{t,3}^\Pi)' \Gamma_{3,2} = 0$ as is easy to verify.

Since $\delta G=O((\log T)^3)/T^{1/2})$ also $\delta G_{1,2}=O((\log T)^3/T^{1/2})$
and $\delta G_{2,2} = O((\log T)^3/T^{1/2})$.
For $\bar \Gamma_\bot$ such that $\bar \Gamma_\bot' \bar \Gamma_{3,2}= 0$
we have that $\bar \Gamma_\bot' Z_{33}^{-1}\bar P_{33} =  \bar \Gamma_\bot'$. 
Then the (3,2) block of~\eqref{eq:imgg} shows that
$$
[0,\bar \Gamma_\bot']  \delta G_{:,2} = \bar \Gamma_\bot' \delta G_{3,2} =
\bar \Gamma_\bot' Z_{33}^{-1}\bar P_{33} \delta G_{3,2} =
[0,\bar \Gamma_\bot'Z_{33}^{-1}][J\bar G_{:,2}\bar R_2^{-2} - \delta_{zz}\bar G_{:,2}] =  o(T^{-1/2})
$$
since the (3,2) block entry of $J\bar{G}$ and $\delta_{zz}\bar G$ both are of order $o(T^{-1/2})$
as follows from the norm bounds given for the blocks of $J$ and $\delta_{zz}$.

Due to the chosen normalization $\bar \Gamma_{3,2}'S_{p,22} = \bar G_{3,2}'S_{p,22} = I$ 
and thus $S_{p,22}' \delta G_{3,2}=0$. 
Since $[\bar \Gamma_\bot,S_{p,22}]$ is nonsingular (as is straightforward to see from $\Gamma_{3,2}'S_{p,22}=I$) 
we obtain $\delta G_{3,2} = o(T^{-1/2})$. 

The order of convergence for $\delta G_{2,1}$ follows from the orders of convergence of $J, \delta_{zz}$ and $\delta G$ as derived in 
Lemma~\ref{lem:SVD}.

\subsection{Proof of Lemma~\ref{lem:defhb}}
(I) Let $\tilde \beta_{RRR,r} = {\mathcal T}_y \hat \beta_{RRR,r} {\mathcal T}_{z,r}^{-1}$ and $\tilde b_r =
{\mathcal T}_y b_r {\mathcal T}_{z,r}^{-1}$.
Then
$$
\begin{array}{ccc}
\tilde \beta_{RRR,r}  & = &
\la \tilde y_t^\pi , \tilde z_t^\pi \ra \hat G\hat G^\dagger =\tilde b_r \la \tilde z_t^\pi , \tilde z_t^\pi \ra \tilde D_z
\bar G\bar G^\dagger \tilde D_z' +
\la \tilde \ve_t , \tilde z_t^\pi \ra \tilde D_z \bar G\bar G^\dagger \tilde D_z
\\ & \doteq & \tilde b_r \la \tilde z_t^\pi , \tilde z_t^\pi \ra \tilde D_z \bar G\bar G^\dagger \tilde D_z' +
\left[ \sqrt{T}\la \tilde \ve_t , \tilde z_t^\pi \ra  \tilde D_z  \right] \bar \Gamma \bar \Gamma^\dagger D_z
\end{array}
$$
where $\doteq$ stands for equality up to terms of order $o(T^{-1})$ in the first $c_z$ columns
and of order $o(T^{-1/2})$ in the remaining columns.
This follows from $\la \tilde \ve_t, \tilde z_t^\pi \ra \tilde D_z = O((\log T)^3)T^{-1/2}),
\delta G = O((\log T)^3 T^{-1/2})$ and the definition of $D_z=\mbox{diag}(T^{-1}I,T^{-1/2}I)$
showing that $\bar G$ can be replaced by $\bar \Gamma$ in the second term in the last equation
with introduction of an error of the stated order.
Now since $\tilde b_r$ is block diagonal
$$
\tilde b_r \la \tilde z_t^\pi , \tilde z_t^\pi \ra \tilde D_z \bar G\bar G^\dagger \tilde D_z
= \tilde D_y^{-1} \tilde b_r \tilde D_z \la \tilde z_t^\pi , \tilde z_t^\pi \ra \tilde D_z
\bar G\bar G^\dagger \tilde D_z.
$$

Next note that (using the index '$:$' to denote block columns or rows resp.)
$\tilde b_{r,1,:} = [I,0]= \bar \Gamma_{:,1}'$.
Therefore we have 
recalling that $(\bar{G}^\dagger)' = \bar G(\bar G'\bar M \bar G)^{-1}$
\begin{eqnarray*}
[I,0] & = &
\bar{G}_{:,1}' \bar M %\tilde D_z \la \tilde z_t^\pi , \tilde z_t^\pi \ra \tilde D_z  
(\bar{G}^\dagger)'
= (\bar \Gamma_{:,1})' \bar M %\tilde D_z \la \tilde z_t^\pi , \tilde z_t^\pi \ra \tilde D_z  
(\bar{G}^\dagger)' +
\delta G_{:,1}' \bar M %\tilde D_z \la \tilde z_t^\pi , \tilde z_t^\pi \ra \tilde D_z  
(\bar{G}^\dagger)' \\
& = & \tilde b_{r,1,:} \bar M %\tilde D_z \la \tilde z_t^\pi ,\tilde z_t^\pi \ra \tilde D_z  
(\bar{G}^\dagger)' +
\delta G_{:,1}' \bar M  (\bar{G}^\dagger)'.
\end{eqnarray*}

This leads to
$\tilde b_{r,1,:} \bar M %\tilde D_z \la \tilde z_t^\pi , \tilde z_t^\pi \ra \tilde D_z  
(\bar{G}^\dagger)' = [ [I,0] -
\delta G_{:,1}' \bar M  (\bar{G}^\dagger)']$.
Hence we obtain
$$
\begin{array}{ccc}
\tilde b_{r,1,:} \la \tilde z_t^\pi , \tilde z_t^\pi \ra \tilde D_z \bar G\bar G^\dagger \tilde D_z
 & = & T^{1/2} \left[ [I,0] - \delta G_{:,1}' \bar M (\bar G^\dagger)'\right]\bar{G}'
\tilde D_z \\
 & = & \tilde b_{r,1,:} +  T\left[ \delta G_{:,1}' \right] \left[ I - \bar M\bar G\bar G^\dagger \right] D_z.
\end{array}
$$

Here
$\bar{G}_{:,1}' =[I,0]+\delta G_{:,1}'$ is used in the last line. 

With respect to the second
block row it follows from $\tilde b_{r,2,:} = [0,\tilde b_{2,3}]$ that
$\tilde b_{r,2,:}= \tilde b_{r,2,:}\tilde D_{z}$. Also we have
from $\bar \beta_{2,3}=\bar \O_2\bar \Gamma_{3,2}'$ using $\bar G_{:,2}'\bar M \bar G\bar G^\dagger = \bar G_{:,2}'$
by the definition of $\bar G^\dagger$ that
\begin{eqnarray*}
\bar \beta_{2,3}   \la \tilde z_{t,3}^\pi , \tilde z_t^\pi \ra \tilde D_z \bar G\bar G^\dagger \tilde D_z
& = & \bar \O_2\bar \Gamma_{3,2}'
\la \tilde z_{t,3}^\pi , \tilde z_t^\pi \ra \tilde D_z \bar G\bar G^\dagger \tilde D_z
 =  \bar \O_2 \bar \Gamma_{:,2}'\bar M \bar G\bar G^\dagger \tilde D_z \\
& = & \bar \O_2[\bar G_{:,2}' \tilde D_z - \delta G_{:,2}'\bar M \bar G\bar G^\dagger \tilde D_z]
 =  \bar \O_2[\bar \Gamma_{:,2}' + \delta G_{:,2}'(I-\bar M \bar G\bar G^\dagger) \tilde D_z] \\
& = & [0,\bar \beta_{2,3}] + \bar \O_2 \delta G_{:,2}'(I-\bar M \bar G\bar G^\dagger) \tilde D_z.
\end{eqnarray*}

This implies
\begin{eqnarray*}
\tilde b_{r,2,:}  \la \tilde z_t^\pi , \tilde z_t^\pi \ra \tilde D_z \bar G\bar G^\dagger \tilde D_z
& = & \tilde b_{r,2,:} \tilde D_z  \la \tilde z_t^\pi , \tilde z_t^\pi \ra \tilde D_z \bar G\bar G^\dagger \tilde D_z \\
&  = & [\tilde b_{2,3} - \bar \beta_{2,3}]  \la \tilde z_{t,3}^\pi , \tilde z_t^\pi \ra \tilde D_z \bar G\bar G^\dagger \tilde D_z +
\bar \beta_{2,3}  \la \tilde z_{t,3}^\pi , \tilde z_t^\pi \ra \tilde D_z \bar G\bar G^\dagger \tilde D_z \\
&  \doteq  &
[0,[\tilde b_{2,3} - \bar \beta_{2,3}]\mE \tilde z_{t,3}^\Pi(\tilde z_{t,3}^\Pi)'\Gamma_{3,2}\Gamma_{3,2}^\dagger] +
\bar \beta_{2,3}   \la \tilde z_{t,3}^\pi , \tilde z_t^\pi \ra \tilde D_z \bar G\bar G^\dagger \tilde D_z \\
& = & [0,[\tilde b_{2,3} - \bar \beta_{2,3}]   \mE \tilde z_{t,3}^\Pi (\tilde z_{t,3}^\Pi)'
\Gamma_{3,2}\Gamma_{3,2}^\dagger] +
[0,\bar \beta_{2,3}] +
\bar \O_2 \delta G_{:,2}' \left[ I - \bar M \bar G\bar G^\dagger \right]   \tilde D_z \\
& = & \tilde b_{r,2,:} + [0,[\tilde b_{2,3}- \bar \beta_{2,3}] \left[
\mE \tilde z_{t,3}^\Pi (\tilde z_{t,3}^\Pi)'\Gamma_{3,2}\Gamma_{3,2}^\dagger - I \right] ] +
\bar \O_2 \sqrt{T}\delta G_{:,2}' \left[ I - \bar M \bar G\bar G^\dagger \right]   D_z.
\end{eqnarray*}

Here the third line follows from the orders of convergence in $\delta G = \bar{G} - \bar \Gamma$
established above and $\tilde b_{2,3} - \bar \beta_{2,3} = o(T^{-\epsilon}), \epsilon>0$
as follows from standard theory in the stationary case.
Then the representation given in (I) is proved by replacing $\bar \O_2$ by its limit $\tilde O_2$
which introduces an error of the required form since $\bar \O_2 - \tilde O_2 = o(T^{-\epsilon})$ for some $\epsilon>0$
as follows from $\bar\beta_{2,3} -\tilde b_{2,3} = o(T^{-\epsilon})$ (see the proof of Lemma~\ref{lem:dKp}).

(II) For ~\eqref{equ:sqrttdeltag} note that $\delta G$ and
$\bar M- \bar \Psi$ both are of order $O((\log T)^3 T^{-1/2})$
and the two norm of $\bar \Psi$ and $\bar \Gamma$ is of order $O(\log T)$. 
Therefore replacing
$\bar M \bar G\bar G^\dagger$ by $\bar \Psi \bar \Gamma \bar \Gamma^\dagger$ introduces
an error of order
$O((\log T)^{4} T^{-1/2})=o(1)$ proving ~\eqref{equ:sqrttdeltag} since
$\delta G_{3,2}=o(T^{-1/2})$ (see Lemma~\ref{lem:asydeltaG}) and
$$
\sqrt{T}\delta G_{:,2}'(I-\bar \Psi \bar \Gamma \bar \Gamma^\dagger) = 
\sqrt{T}\delta G_{2,2}[ -Z_{21}Z_{11}^{-1} \quad I \quad 0] + \sqrt{T}\delta G_{3,2}' [ 0 \quad 0 \quad I - Z_{33}\bar \Gamma_{3,2}\bar \Gamma_{3,2}^\dagger].
$$

With respect to~\eqref{equ:tdeltag} note that
$$
T\delta G_{:,1}'\left[ I - \bar M \bar G\bar G^\dagger \right]  =
T\delta G_{:,1}'\left[ I - \bar \Psi \bar \Gamma \bar \Gamma^\dagger \right] +
T\delta G_{:,1}'\left[ \bar \Psi \bar \Gamma\bar \Gamma^\dagger  - \bar M \bar G\bar G^\dagger \right]
$$
where
\begin{equation}\label{eq:dG11}
T\delta G_{:,1}'\left[ I - \bar \Psi \bar \Gamma \bar \Gamma^\dagger \right]
=   \left[ -T\delta G_{2,1}'Z_{21}Z_{11}^{-1} \quad
T\delta G_{2,1}'  %\bar \Gamma_{11}' T(J_{2.1,1} - \delta_{zz}^{2.1,1})'
%(T^{-1}\la \tilde z_{t,2.1}, \tilde z_{t,2.1}\ra)^{-1}
\quad
T\delta G_{3,1}'\bar P \right]
\end{equation}
is obvious from the form of $I-\bar \Psi \bar \Gamma \bar \Gamma^\dagger$ (see the proof of Lemma~\ref{lem:asydeltaG}).

Noting that $\bar M \bar G\bar G^\dagger = \bar M \bar G(\bar G'\bar M \bar G)^{-1}\bar G'$
it follows that
\begin{eqnarray*}
\delta G_{:,1}' [ \bar \Psi \bar \Gamma \bar \Gamma^\dagger - \bar M \bar G \bar G^\dagger] & = &
\delta G_{:,1}' [ (I-\bar \Psi \bar \Gamma \bar \Gamma^\dagger)(\bar \Psi - \bar M)\bar \Gamma \bar \Gamma^\dagger
- \bar \Psi \bar \Gamma (\bar \Gamma' \bar \Psi \bar \Gamma)^{-1} \delta G' (I-\bar \Psi \bar \Gamma \bar \Gamma^\dagger) \\
& & -(I-\bar \Psi \bar \Gamma \bar \Gamma^\dagger) \bar \Psi \delta G \bar \Gamma^\dagger ]+o(T^{-1}) \\
& = &
-\delta G_{3,1}' \la \tilde z_{t,3}^\pi, \tilde z_{t,3}^\pi \ra \bar \Gamma_{3,2}(\bar \Gamma_{3,2}' Z_{3,3} 
\bar \Gamma_{3,2})^{-1} \delta G_{:,2}'(I-\bar \Psi \bar \Gamma \bar \Gamma^\dagger) + o(T^{-1})
\end{eqnarray*}
since $\delta G_{1,1}=o(T^{-1}), \delta G_{2,1} = O((\log T)^7 /T)$
and (see the proof of Lemma~\ref{lem:asydeltaG} and~\eqref{equ:defJ})
\begin{eqnarray*}
\sqrt{T}\delta G_{3,1}'
(I-  Z_{33} \bar \Gamma_{3,2}\bar \Gamma_{32}^\dagger) & = &
\sqrt{T}[J_{3,1}-\delta_{z,z}^{31}]'\bar S
(I-  Z_{33} \bar \Gamma_{3,2}\bar \Gamma_{32}^\dagger)  + o(T^{-\epsilon})\\
& = &
[\la \tilde z_{t,3}^\pi, \tilde \ve_{t,1} \ra + \la \tilde z_{t,3}^\pi, \tilde y_{t,2} \ra \Xi']'\bar S
(I-  Z_{33} \bar \Gamma_{3,2}\bar \Gamma_{32}^\dagger) + o(T^{-\epsilon}) \\
& = & o(T^{-\epsilon})
\end{eqnarray*}
for some $\epsilon > 0$
due to $\la \tilde z_{t,3}^\pi, \tilde \ve_{t,1} \ra = o(T^{-\epsilon})$
for $0<\epsilon < 1/2$ and $\la \tilde z_{t,3}^\pi, \tilde y_{t,2} \ra \to \mE \tilde z_{t,3}^\Pi (\tilde y_{t,2}^\Pi)'
= \mE \tilde z_{t,3}^\Pi(\tilde z_{t,3}^\Pi)' \Gamma_{3,2}\tO_2'$.
Now
$\la \tilde y_{t,2}^\pi, \tilde z_{t,3} \ra \bar S(I-  Z_{33} \bar \Gamma_{3,2}\bar \Gamma_{3,2}^\dagger)\to 0$
according to
\begin{equation} \label{equ:convS}
\bar S Z_{33}\bar \Gamma_{3,2} \to S \mE \tilde z_{t,3}^\Pi(\tilde z_{t,3}^\Pi)' \Gamma_{3,2}.
\end{equation}

Recall that by definition 
$$
\mE \tilde z_{t,3}^\Pi(\tilde z_{t,3}^\Pi)'\Gamma_{3,2}\Theta_2^2 = 
\mE \tilde z_{t,3}^\Pi(\tilde y_{t,2}^\Pi)'(\mE \tilde y_{t,2}^\Pi(\tilde y_{t,2}^\Pi)')^{-1}\mE \tilde y_{t,2}^\Pi(\tilde z_{t,3}^\Pi)'\Gamma_{3,2}.
$$

Together with the definition of $S = (\mE \tilde z_{t,3}^\Pi(\tilde z_{t,3}^\Pi)' - \mE \tilde z_{t,3}^\Pi(\tilde y_{t,2}^\Pi)'(\mE \tilde y_{t,2}^\Pi(\tilde y_{t,2}^\Pi)')^{-1}\mE \tilde y_{t,2}^\Pi(\tilde z_{t,3}^\Pi)')^{-1}$ this implies 
$$
S \mE \tilde z_{t,3}^\Pi(\tilde z_{t,3}^\Pi)' \Gamma_{3,2} = \Gamma_{3,2} (I-\Theta_2^2)^{-1}.
$$

Finally $\sqrt{T}[\delta_{yz}^{21}-\delta_{yy}^{21}]\la \tilde y_{t,2}^\pi, \tilde y_{t,2}^\pi \ra^{-1}
= \Xi+ o(T^{-\epsilon})$ is used (as derived above).

Since %$\delta G_{1,2} = o(T^{-1/2-\epsilon}),
$\delta G_{3,2}=o(T^{-1/2-\epsilon})$
(see Lemma~\ref{lem:asydeltaG})
it follows from the form of $(I-\bar \Psi \bar\Gamma \bar \Gamma^\dagger)$ (see the proof of Lemma~\ref{lem:asydeltaG})
that
\begin{equation} \label{eq:dG12}
T\delta G_{:,1}' [ \bar \Psi \bar \Gamma \bar \Gamma^\dagger - \bar M \bar G \bar G^\dagger]
=
-(\sqrt{T} \delta G_{3,1})' \mE \tilde z_{t,3}^\Pi (\tilde z_{t,3}^\Pi)' (\Gamma_{3,2}^\dagger)'
(\sqrt{T} \delta G_{2,2}')[-Z_{21}Z_{11}^{-1},I,0] + o(1).
\end{equation}
Combining \eqref{eq:dG11} and \eqref{eq:dG12} we obtain
$$
T\delta G_{:,1}'\left[ I - \bar M \bar G\bar G^\dagger \right]
= T\left( \delta G_{2,1}' - \delta G_{3,1}' \mE \tilde z_{t,3}^\Pi (\tilde z_{t,3}^\Pi)' (\Gamma_{3,2}^\dagger)'
 \delta G_{2,2}' \right)'\left[ -Z_{21}Z_{11}^{-1},
\quad  I , \quad 0 \right]
+
\left[0 \quad 0 \quad T\delta G_{3,1}'\bar P \right] + o(1).
$$

This completes the proof by the definition of $\delta H$.

\subsection{Proof of Lemma~\ref{lem:asydist}}
The first claim is standard and follows from Lemma~\ref{lem:bound}
using $n_t := [\tilde z_{t,1}',(\tilde z_{t,1}^u)']', v_t := {\mathcal T}_y\Lambda \ve_t$
noting that then $v_t$ is a martingale difference.
The second claim is a standard central limit result.
Further from Lemma~\ref{lem:asydeltaG} we have
$$
\sqrt{T} \delta G_{2,2} = (T^{-1}\la \tilde z_{t,2.1}^\pi, \tilde z_{t,2.1}^\pi \ra)^{-1}
\sqrt{T}\left[ J_{2.1,3}\Gamma_{3,2}\Theta_2^{-2} - \delta_{zz}^{2.1,3}\Gamma_{3,2} \right] + o_P(1).
$$

Now from the proof of Lemma~\ref{lem:asydeltaG}
\begin{eqnarray*} \sqrt{T} J_{2.1,3}\Gamma_{3,2}  & = &
\la \tilde z_{t,2.1}^\pi,\tilde y_{t,2}^\pi \ra (\mE \tilde y_{t,2}^\Pi(\tilde y_{t,2}^\Pi)')^{-1}\tO_2\Gamma_{3,2}'
Z_{33}\Gamma_{3,2} + o_P(1) \\
 & = & \la \tilde z_{t,2.1}^\pi,\tilde \ve_{t,2} \ra (\mE \tilde y_{t,2}^\Pi(\tilde y_{t,2}^\Pi)')^{-1}\tO_2(\tO_2'(\mE \tilde y_{t,2}^\Pi(\tilde y_{t,2}^\Pi)')^{-1}\tO_2)^{-1}\Theta_2^2 \\
& & + \la \tilde z_{t,2.1}^\pi,\tilde z_{t,3}^\pi \ra \Gamma_{3,2} \Theta_2^2 + o_P(1)
\end{eqnarray*}
since $\Theta_2^2 = \tO_2'(\mE \tilde y_{t,2}^\Pi(\tilde y_{t,2}^\Pi)')^{-1}\tO_2(\Gamma_{3,2}' Z_{33}\Gamma_{3,2})$ 
as is straightforward to show. %and $\Gamma_{3,2}' Z_{33}\Gamma_{3,2}=I_{n-c_y}$.
This shows the expression for $\sqrt{T}\delta G_{2,2}$
since $\sqrt{T}\delta_{zz}^{2.1,3} = \la \tilde z_{t,2.1}^\pi,\tilde z_{t,3}^\pi \ra$.\\
In order to obtain the expression for $T\bar P'\delta G_{3,1}$ note that from the
definition of $\bar P, \bar P_{33}$ and
the arguments in the proof of Lemma~\ref{lem:asydeltaG}
$$
\la \tilde z_{t,3}^\pi, \tilde z_{t,3}^\pi \ra \bar P' \delta G_{3,1} =
\bar P_{33}\delta G_{3,1} = \bar P[J_{3,1}- \delta_{zz}^{31} ] + o(T^{-1}).
$$

Now $\sqrt{T}[J_{3,1} - \delta_{zz}^{31}] \to \mE \tilde z_{t,3}^\Pi(\tilde z_{t,3}^\Pi)' \Gamma_{3,2}\tO_2'\Xi'$ according to the proof of Lemma~\ref{lem:asydeltaG} 
and
$\bar P \to I - \mE \tilde z_{t,3}^\Pi(\tilde z_{t,3}^\Pi)' \Gamma_{3,2}\Gamma_{3,2}^\dagger$ where the difference is
of order $O_P(T^{-1/2})$ since only stationary components are involved.
The result then follows from the expression for $J_{3,1}$ according to Lemma~\ref{lem:SVD} (II) since
$$
(I - \mE \tilde z_{t,3}^\Pi(\tilde z_{t,3}^\Pi)' \Gamma_{3,2}\Gamma_{3,2}^\dagger)\mE \tilde z_{t,3}^\Pi(\tilde z_{t,3}^\Pi)' \Gamma_{3,2}O_2'\Xi' =0.
$$

The evaluations for $\delta H$ are more involved:
Using the expressions given in Lemma~\ref{lem:asydeltaG}
and defining $Z_{22.1} = \la \tilde z_{t,2.1}, \tilde z_{t,2.1}\ra$ we have
$\delta H = \delta G_{2,1} - \delta G_{2,2}(\Gamma_{3,2}^\dagger)'\mE \tilde z_{t,3}^\Pi(\tilde z_{t,3}^\Pi)' \delta G_{3,1}=$
%
%\small 
$$
\begin{array}{l}
=
Z_{22.1}^{-1}\left[ J_{2.1,1} + \left\{ J_{2.1,3}-\delta_{zz}^{2.1,3}
- (J_{2.1,3}\Gamma_{3,2}\Theta_2^{-2} - \delta_{zz}^{2.1,3} \Gamma_{3,2})
\Gamma_+  \right\}\delta G_{3,1} \right] +o(T^{-1})\\
= Z_{22.1}^{-1}\left[ J_{2.1,1} + \left\{
J_{2.1,3}-\delta_{zz}^{2.1,3}
- \left(J_{2.1,3}\Gamma_{3,2}(\Theta_2^{-2}-I)+(J_{2.1,3}-\delta_{zz}^{2.1,3})\Gamma_{3,2}\right)\Gamma_{+} \right\} \delta G_{3,1} \right] +o(T^{-1})\\
= Z_{22.1}^{-1}\left[ J_{2.1,1} +
(J_{2.1,3}-\delta_{zz}^{2.1,3})\left(I-\Gamma_{3,2}\Gamma_{+} \right)\delta G_{3,1}
- J_{2.1,3}\Gamma_{3,2}(\Theta_2^{-2}-I)(\Gamma_{3,2}^\dagger)'\mE \tilde z_{t,3}^\Pi(\tilde z_{t,3}^\Pi)' \delta G_{3,1} \right] + o(T^{-1})\\
= Z_{22.1}^{-1}\left( J_{2.1,1} -
J_{2.1,3}/\sqrt{T} \Gamma_{3,2}\Theta_2^{-2}\tO_2'\Xi'
\right)+ o_P(T^{-1})
\end{array}
$$
%
%\normalsize
(using $\Gamma_+ := (\Gamma_{3,2}^\dagger)'\mE \tilde z_{t,3}^\Pi(\tilde z_{t,3}^\Pi)'$)
where the last line follows from
$T(J_{2.1,3}-\delta_{zz}^{2.1,3})\left(I-\Gamma_{3,2}(\Gamma_{3,2}^\dagger)' Z_{33}\right)\delta G_{3,1} \to 0$
in probability since
$\sqrt{T} [J_{2.1,3}-\delta_{zz}^{2.1,3}]$ converges in distribution,
$\sqrt{T}\delta G_{3,1} \to S Z_{33}\Gamma_{3,2}\tO_2'\Xi'$
and
$
S Z_{33}\Gamma_{3,2} %= (Z_{33} - Z_{33}\Gamma_{3,2}\Theta_2^2 \Gamma_{3,2}'Z_{33})^{-1} Z_{33}\Gamma_{3,2}
= \Gamma_{3,2} (I-\Theta_2^2)^{-1}
$
(see ~\eqref{equ:convS}). The result then follows from some algebraic operations.

The final statement 
applies Lemma~\ref{lem:dKp}:
\begin{equation} \label{eq:dKp}
\bar \Gamma_{3,2}' - \Gamma_{3,2}' = \tilde O_2^\dagger (\la \tilde y_{t,2}^\pi, \tilde z_{t,3}^\pi \ra
(\la \tilde z_{t,3}^\pi, \tilde z_{t,3}^\pi \ra)^{-1} - \tilde \beta_{2,3} )
(I - S_{p,22}\Gamma_{3,2}') + o( T^{-1/2}).
\end{equation}

This shows the result since 
$\bar \beta_{2,3} - \tilde b_{2,3} =
(\tilde {\mathcal O}_2 -\tilde O_2)\bar \Gamma_{3,2}' + \tilde O_2(\bar \Gamma_{3,2}' - \Gamma_{3,2}')$ and
$\bar \Gamma_{3,2}'(I-Z_{33}\Gamma_{3,2}\Gamma_{3,2}^\dagger)\to 0$.
The fact that 
$(I-S_{p,22}\Gamma_{3,2}')(I-Z_{33}\Gamma_{3,2}\Gamma_{3,2}^\dagger)=
(I-Z_{33}\Gamma_{3,2}\Gamma_{3,2}^\dagger)$ simplifies the expressions.
This concludes the proof.

%%%%%%%%%%%%%%%%%%%%%%%%%%%%%%%%%%%%
%% fuly modified case            %%%
%%%%%%%%%%%%%%%%%%%%%%%%%%%%%%%%%%%%
\subsection{Proof of Theorem ~\ref{thm:FM}}

The proof 
follows closely the proof in the OLS case. The changes in comparison to the OLS case are that $\la y_t, z_t \ra$ is replaced with 
$$
\hYZ : = \la y_t , z_t^\pi \ra - \hD_{\hat u,\Delta z^\pi} - \hO (\la \Delta z_t, z_t^\pi \ra - \hD_{\Delta z, \Delta z^\pi}) = \la y_t , z_t^\pi \ra + \hat B^+ 
$$
where hence the additional term is called $\hat B^+$ and in the SVD the weighting is not based on $\la y_t^\pi ,y_t^\pi\ra$ but on $W_+ = \left(  
\la y_t^\pi , y_t^\pi \ra + \hat C^+
\right)^{-1}$ where
$$
\hat C^+ := - \hO (\la \D z_t, y_t^\pi \ra - \hD_{\D z,\D y^\pi}) - (\la \D z_t, y_t^\pi \ra - \hD_{\D z,\D y^\pi})'(\hO)'.
$$

The asymptotics for the additional terms are detailed in the lemma below:
\begin{lemma} \label{lem:addterms}
Under the assumptions of Theorem~\ref{thm:FM} the following holds:
\begin{eqnarray*}
\hD_{\hat u,\Delta \tilde z^\pi} + \hO (\la \Delta z_t, \tilde z_t^\pi \ra - \hD_{\Delta z, \Delta \tilde z^\pi}) & = & 
\left[ \begin{array}{cc} \Omega_{\hat u\Delta z}^{:,n}(\Omega_{\Delta z\Delta z}^{n,n})^{-1}\int dB_{n} B_{n}' +o_P(1) & o_P(T^{-1/2}) \end{array} \right], \\
\hO (\la \D z_t, \tilde y_t^\pi \ra - \hD_{\D z,\D \tilde y^\pi}) & = & \left[ \begin{array}{cc} \Omega_{\hat u\Delta z}^{:,n}(\Omega_{\Delta z\Delta z}^{n,n})^{-1}\int dB_{n} B_{n}' 
\left( \begin{array}{c} I \\ 0 
\end{array} \right)  
+o_P(1) & o_P(T^{-1/2}) \end{array} \right]
\end{eqnarray*} 
\end{lemma}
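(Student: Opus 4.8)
The plan is to prove both displays by expanding every factor along the nonstationary/stationary block partition of the reordered regressor $\check z_t = [(\tilde z_{t,1})',(\tilde z_{t,2})',(\tilde z_{t,1}^u)',(\tilde z_{t,3})',(\tilde z_{t,2}^u)']'$ introduced in Lemma~\ref{lem:omegadelta}, and then inserting the blockwise limits recorded there. Writing $\hO = \hat\Omega_{\hat u,\Delta z}\hat\Omega_{\Delta z,\Delta z}^{-1}$, the left-hand side of the first display is $\hD_{\hat u,\Delta\tilde z^\pi} + \hO\la\Delta z_t,\tilde z_t^\pi\ra - \hO\,\hD_{\Delta z,\Delta\tilde z^\pi}$. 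Lemma~\ref{lem:omegadelta} supplies the three ingredients directly: $\hO = [\,\Omega_{u,\Delta z}^{:,n}(\Omega_{\Delta z,\Delta z}^{n,n})^{-1}+o_P(1),\ O_P(1)\,]$; the nonstationary block of $\la\Delta z_t,\tilde z_t^\pi\ra - \hD_{\Delta z,\Delta\tilde z^\pi}$ converges to the stochastic-integral limit while its remaining blocks are pinned down by the stated kernel-dependent orders; and $\hD_{\hat u,\Delta\tilde z^\pi} = [\,\Delta_{u,\Delta z}^{:,n}+O_P((K/T)^{1/2}),\ O_P(1/\sqrt{KT})\,]$. Substituting these and multiplying out the block products is the first step.

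The decisive book-keeping is to collect the resulting error orders with Assumption~K, i.e.\ $K\propto c_T T^b$ with $b\in(1/4,2/3)$ (ignoring the slowly varying factor $c_T$, which does not affect the polynomial rate). In the nonstationary (first) block column the residual terms are of type $O_P((K/T)^{1/2})=O_P(T^{(b-1)/2})=o_P(1)$, matching the advertised $o_P(1)$ remainder; since these exceed $T^{-1/2}$, the nonstationary block cannot be sharpened to $o_P(T^{-1/2})$. In the stationary (second) block column the surviving errors are of type $O_P(K^{-2})=O_P(T^{-2b})$ and $O_P(1/\sqrt{KT})=O_P(T^{-(b+1)/2})$, both of which are $o_P(T^{-1/2})$, the binding requirement being $2b>1/2$, i.e.\ exactly the lower endpoint $b>1/4$ of Assumption~K. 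One then checks that the cross-block products (for instance the $O_P(1)$ stationary block of $\hO$ against the $o_P(T^{-1/2})$ stationary column of the middle factor) stay within these tolerances, which they do because $\hO$ is at worst $O_P(1)$.

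For the second identity I would reduce to the first by invoking the representation of Theorem~\ref{thm:repr}, $\tilde y_t = \tilde b_r\tilde z_t + \tilde\ve_t$, in which the only integrated components of $\tilde y_t$ are the first $c_y$ coordinates $\tilde y_{t,1}$, whose integrated level coincides with $\tilde z_{t,1}$, the leading block of the integrated part of $\tilde z_t$, while $\tilde y_{t,2}$ is stationary. Hence the nonstationary columns of $\la\Delta z_t,\tilde y_t^\pi\ra - \hD_{\Delta z,\Delta\tilde y^\pi}$ reproduce the corresponding columns of the $\tilde z$-version restricted to the $\tilde z_{t,1}$-block, which is precisely the content of the trailing selector $[I,0]'$, while the stationary columns are again $o_P(T^{-1/2})$. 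Premultiplying by $\hO$ and reusing the block limits of the first part then yields the stated expression.

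The main obstacle is not any single limit --- those are delegated to Lemma~\ref{lem:omegadelta} and ultimately to Lemma~8.1 of \cite{Phillips95} --- but the precise reconciliation of the one-sided long-run covariance contributions. One must verify that the deterministic one-sided pieces carried by $\hD_{\hat u,\Delta\tilde z^\pi}$ and by $\hO\,\hD_{\Delta z,\Delta\tilde z^\pi}$ combine, together with the serial-correlation bias implicit in the raw limit of $\hO\la\Delta z_t,\tilde z_t^\pi\ra$ and the convention fixing the stochastic integral $\int dB_n B_n'$, to leave precisely the two-sided correction $\Omega_{\hat u\Delta z}^{:,n}(\Omega_{\Delta z\Delta z}^{n,n})^{-1}\int dB_n B_n'$; this telescoping is exactly the endogeneity-plus-serial-correlation cancellation that defines the fully modified transform, and it requires tracking whether each estimator is formed from the fitted residuals $\hat u_t = y_t - \hat\beta_{OLS}z_t$ or from the underlying innovations. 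Carrying this cancellation through while holding every bandwidth-dependent remainder uniformly within the $o_P(1)$ and $o_P(T^{-1/2})$ tolerances across all cross terms is the tedious but decisive part of the argument.
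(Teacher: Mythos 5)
Your overall route is the same as the paper's: both displays are reduced to the blockwise limits of Lemma~\ref{lem:omegadelta} plus the rate bookkeeping $\sqrt{K/T}=o_P(1)$, $1/\sqrt{KT}=o_P(T^{-1/2})$, $K^{-2}=o_P(T^{-1/2})$ supplied by Assumption~K, and the second display is reduced to the first via the representation $\tilde y_t^\pi=\tilde b_r\tilde z_t^\pi+\tilde u_t^\pi$ (the paper writes this decomposition out explicitly, so that $\la \D z_t,\tilde y_t^\pi\ra-\hD_{\D z,\D\tilde y^\pi}=(\la \D z_t,\tilde z_t^\pi\ra-\hD_{\D z,\D\tilde z^\pi})\tilde b_r'+(\la \D z_t,\tilde u_t\ra-\hD_{\D z,\D\tilde u^\pi})$; your "selector $[I,0]'$" argument is the same fact expressed through the block form of $\tilde b_r$).

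There is, however, one concrete step that you flag as "the decisive part" and then leave unverified, and it is the only place where your write-up does not actually deliver the stated limit. You carry $\hD_{\hat u,\Delta\tilde z^\pi}=[\,\Delta_{u,\Delta z}^{:,n}+O_P((K/T)^{1/2}),\,O_P(1/\sqrt{KT})\,]$ into the sum and then appeal to an unproved "telescoping" of one-sided pieces to explain why no constant $\Delta_{u,\Delta z}^{:,n}$ survives in the nonstationary block. No such cancellation against $\hO\,\hD_{\Delta z,\Delta\tilde z^\pi}$ occurs (that cancellation is already internal to the middle factor and is exactly what Lemma~\ref{lem:omegadelta} packages); what is actually needed is that $\Delta_{u,\Delta z}^{:,n}=0$ outright. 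This follows from Assumption~P: $u_t=\Lambda\ve_t$ is iid while $\nu_t=\sum_{j\ge 1}C_j\ve_{t-j}$ involves only strictly lagged innovations, so $\Delta_{u,\Delta z}=\sum_{j\ge 0}\mE\,u_j(\Delta z_0)'=0$. This is precisely why the paper can assert $\hD_{\hat u,\Delta z}=[o_P(1),o_P(T^{-1/2})]$ and dismiss the term as lower order. Without that observation your first display would retain an extraneous additive constant $\Delta_{u,\Delta z}^{:,n}$ in the nonstationary block column; with it, the rest of your bookkeeping (including the identification of $b>1/4$ as the binding bandwidth condition) goes through as in the paper.
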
 
\begin{proof}
The proof of the first statement uses the fact that according to Lemma~\ref{lem:omegadelta}
$\hD_{\hat u,\Delta z} = [o_P(1),o_P(T^{-1/2})]$. 
Here the restrictions on the increase of $K$ as a function of $T$ is used such that 
we obtain $\sqrt{K/T} = o_P(1), 1/\sqrt{KT} = o_P(T^{-1/2}), K^{-2} = o_P(T^{-1/2})$.  
Hence it is of lower order compared to the 
leading terms. Further $\hO (\la \Delta z_t, \tilde z_t \ra - \hD_{\Delta z, \Delta \tilde z}) = [O_P(1),o_P(T^{-1/2})]$
and hence these terms are of the same order as the leading terms.
The proof of the second statement is an easy consequence of the results listed in Lemma~\ref{lem:omegadelta} using $\tilde y_t^\pi = \tilde b_r \tilde z_t^\pi + \tilde \varepsilon_t^\pi$.
Note that
$$
\la \D z_t, \tilde y_t^\pi \ra - \hD_{\D z,\D \tilde y^\pi} = (\la \D z_t, \tilde z_t^\pi \ra - \hD_{\D z,\D \tilde z^\pi})\tilde b_r'  + 
(\la \D z_t, \tilde u_t \ra - \hD_{\D z,\D \tilde u^\pi})
$$

Convergence for the first summand is contained as the first statement in the lemma, while 
again following Lemma~\ref{lem:omegadelta} we obtain convergence for the second term.  
\end{proof}

For both $\hat \Sigma_{y,z}$ and $W_+$ after transformation using the 
matrices ${\mathcal T}_y, {\mathcal T}_z$ the additional terms in the diagonal blocks are of lower order than the original terms. 
For the off-diagonal blocks the additional terms are of the same order 
in probability. 
This follows from the results in Lemma~\ref{lem:omegadelta}. 
However, for the off-diagonal terms in the consistency proof %of Theorem~\ref{thm:RRR} 
only the order of convergence is used.
Consequently the consistency result and the order of convergence (in probability) also hold in the FM case.  

In the following we will use the following definitions using the same notation as in the OLS case in order to avoid the 
introduction of new symbols.  
\begin{eqnarray*}
\bar Q & := & \hYZ' W_+^{-1} \hYZ, \\
\bar M & := & \la \tilde D_z \tilde z_t^\pi , \tilde D_z \tilde z_t^\pi \ra, \\
\bar \Phi & := &  \left[ \begin{array}{ccc} T^{-1}\la \tilde z_{t,1}^\pi, \tilde z_{t,1}^\pi\ra & T^{-1}\la \tilde z_{t,1}^\pi, \tilde z_{t,2}^\pi\ra & 0 \\
T^{-1}\la \tilde z_{t,2}^\pi, \tilde z_{t,1}^\pi\ra & T^{-1}\la \tilde z_{t,2}^\pi, \tilde z_{t,1}^\pi\ra \la \tilde z_{t,1}^\pi, \tilde z_{t,1}^\pi\ra^{-1}\la \tilde z_{t,1}^\pi, \tilde z_{t,2}^\pi\ra & 0 \\
0 & 0 & \la \tilde z_{t,3}^\pi, \tilde y_{t,2}^\pi\ra \hYY^{-1} %\la \tilde y_{t,2}^\pi, \tilde y_{t,2}^\pi\ra^{-1}
\la \tilde y_{t,2}^\pi, \tilde z_{t,3}^\pi\ra
\end{array} \right], \\ 
\bar \Psi & := & \left[ \begin{array}{ccc} T^{-1}\la \tilde z_{t,1}^\pi, \tilde z_{t,1}^\pi\ra & T^{-1}\la \tilde z_{t,1}^\pi, \tilde z_{t,2}^\pi\ra & 0 \\
T^{-1}\la \tilde z_{t,2}^\pi, \tilde z_{t,1}^\pi\ra & T^{-1}\la \tilde z_{t,2}^\pi, \tilde z_{t,2}^\pi\ra & 0 \\
0 & 0 & \la \tilde z_{t,3}^\pi, \tilde z_{t,3}^\pi\ra
\end{array} \right].
\end{eqnarray*}

Here 
$$
\hYY := \la \tilde y_{t,2}^\pi, \tilde y_{t,2}^\pi \ra - [0,I]' \left[ \hO (\la \D z_t, \tilde y_t^\pi \ra - \hD_{\D z,\D \tilde y}) - (\la \D z_t, \tilde y_t^\pi \ra - 
\hD_{\D z,\D \tilde y})'(\hO)' \right] [0,I]'$$ 

such that $\hYY \to \YY := \mE \tilde y_{t,2}^\Pi (\tilde y_{t,2}^\Pi)'$.

Hence the definition of $\bar Q$ is adapted to the SVD occurring in the FM estimation. Also the $(3,3)$ entry of $\bar \Phi$ is 
changed slightly. The reason for this is visible in the next Lemma~\ref{lem:SVDfm} 
which is the analogon to Lemma~\ref{lem:SVD} for the OLS case. 
\begin{lemma}\label{lem:SVDfm} 
Let the assumptions of Theorem~\ref{thm:FM} hold. 

(I) Partition the matrices
$\bar Q, \bar M,\bar{\Phi}, \bar{\Psi}$ according to the partitioning of $\tilde z_{t}$
denoting the various blocks using subscripts. Then:
\begin{eqnarray*}
\delta_{zz} & := & \bar{M}-\bar{\Psi} =
\left[\begin{array}{ccc} 0 & 0 & O_P(T^{-1/2}) \\ 0 & 0 & O_P(T^{-1/2}) \\ O_P(T^{-1/2}) & O_P(T^{-1/2}) & 0 \end{array} \right], \\
\delta_{yz} & := & \left[\begin{array}{ccc} T^{-1}(\la \tilde y_{t,1}^\pi, \tilde z_{t,1}^\pi \ra - \la \tilde z_{t,1}^\pi, \tilde z_{t,1}^\pi \ra)  &
T^{-1}(\la \tilde y_{t,1}^\pi, \tilde z_{t,2}^\pi \ra - \la \tilde z_{t,1}^\pi, \tilde z_{t,2}^\pi \ra ) &
T^{-1/2}\la \tilde y_{t,1}^\pi, \tilde z_{t,3}^\pi \ra \\
T^{-1/2}\la \tilde y_{t,2}^\pi, \tilde z_{t,1}^\pi \ra  &
T^{-1/2}\la \tilde y_{t,2}^\pi, \tilde z_{t,2}^\pi \ra &
0  \end{array} \right] + \tilde D_y \tilde B^+ \tilde D_z \\
& = & \left[\begin{array}{ccc} O_P(T^{-1}) & O_P(T^{-1}) & O_P(T^{-1/2}) \\ O_P(T^{-1/2}) & O_P(T^{-1/2}) & o_P(1) \end{array} \right], \\
\delta_{yy} & := & \tilde D_y \left( 
\la \tilde y_t^\pi, \tilde y_t^\pi \ra +\tilde C^+ 
- \left[ \begin{array}{cc} \la z_{t,1}^\pi , z_{t,1}^\pi \ra & 0 \\ 0 & 
\hYY  \end{array} \right] 
\right) \tilde D_y \\
&  = &  \left[\begin{array}{cc} O_P(T^{-1}) & O_P(T^{-1/2})
 \\ O_P(T^{-1/2}) &  0 \end{array} \right].
\end{eqnarray*}

(II) Let $J := \bar{Q} - \bar \Phi$. To simplify notation define
$Z_{ij} := T^{-1}\la \tilde z_{t,i}^\pi, \tilde z_{t,j}^\pi \ra, i,j=1,2$. Then
\begin{equation} \label{equ:defJ+}
\begin{array}{lll}
J_{i,j} & =&  [\delta_{zy}^{i1}-Z_{i1}Z_{11}^{-1}\delta_{yy}^{11}]Z_{11}^{-1}Z_{1j} +
Z_{i1}Z_{11}^{-1}\delta_{yz}^{1j} \\
& & + [\delta_{zy}^{i2}- Z_{i1}Z_{11}^{-1}\delta_{yy}^{12}] \YY^{-1}%(\mE \tilde y_{t,2}^\Pi (\tilde y_{t,2}^\Pi)')^{-1}
[ \delta_{yz}^{2j}- \delta_{yy}^{21}Z_{11}^{-1}Z_{1j}] + o_P(T^{-1}),\\
J_{3,i} & = & \delta_{zy}^{31}Z_{11}^{-1}Z_{1i}  +
\la \tilde z_{t,3}^\pi, \tilde y_{t,2}^\pi\ra \hYY^{-1} %\la \tilde y_{t,2}^\pi, \tilde y_{t,2}^\pi \ra^{-1}
[\delta_{yz}^{2i}-\delta_{yy}^{21}Z_{11}^{-1}Z_{1i}] + o_P(T^{-1}),\\
J_{3,3} & = &
\left[ \la \tilde z_{t,3}^\pi, \tilde y_{t,2}^\pi\ra
\hYY^{-1} %(\la \tilde y_{t,2}^\pi, \tilde y_{t,2}^\pi\ra)^{-1}
\delta_{yy}^{21} - \delta_{zy}^{31} \right]
Z_{11}^{-1}
\left[ \delta_{yy}^{12} \hYY^{-1} %(\la \tilde y_{t,2}^\pi, \tilde y_{t,2}^\pi\ra)^{-1}
\la \tilde y_{t,2}^\pi,
\tilde z_{t,3}^\pi\ra -\delta_{yz}^{13} \right]
 + o_P(T^{-1})
\end{array}
\end{equation}
for $i=1,2, j=1,2$ where expressions for the remaining blocks of $J$ follow from symmetry.
Hence $J_{i,j} = O_P(T^{-1})$ for $i,j=1,2$.
Further $J_{3,i} = O_P(T^{-1/2})$  for $i=1,2,3$.
$J_{3,3} = O_P(T^{-1})$ and $J_{3,3} = O((\log T)^3 /T)$ respectively.

(III) $\delta G = O_P(T^{-1/2})$. % and moreover $\delta G = O((\log T)^3/T^{1/2})$. 
\end{lemma}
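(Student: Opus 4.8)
\emph{Proof plan.} The argument runs in close parallel to the proof of Lemma~\ref{lem:SVD}, the only genuine novelty being the bookkeeping for the fully modified correction terms $\hat B^+$ and $\hat C^+$. I would organise it in the same three steps.

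For part~(I) I would first observe that $\bar M$ and $\bar\Psi$ are \emph{identical} to their definitions in the OLS case, since they involve only the regressors $\tilde z_t^\pi$, which are untouched by the fully modified transformation; hence $\delta_{zz}=\bar M-\bar\Psi$ agrees with Lemma~\ref{lem:SVD}(I) and its orders follow verbatim from Lemma~\ref{lem:bound}. The extra summands $\tilde D_y\tilde B^+\tilde D_z$ in $\delta_{yz}$ and $\tilde C^+$ in $\delta_{yy}$ are then controlled block by block using Lemma~\ref{lem:addterms} together with Lemma~\ref{lem:omegadelta}, invoking Assumption~K to secure $\sqrt{K/T}=o_P(1)$, $1/\sqrt{KT}=o_P(T^{-1/2})$ and $K^{-2}=o_P(T^{-1/2})$. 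Two features deserve emphasis: the $(2,2)$ block of $\delta_{yy}$ is \emph{exactly} zero, because the subtracted block-diagonal matrix carries $\hYY$ in that slot and $\hYY$ is defined precisely as $\la\tilde y_{t,2}^\pi,\tilde y_{t,2}^\pi\ra$ minus the matching fully modified correction; and the $(2,3)$ block of $\delta_{yz}$, which vanished identically in the OLS case, is now merely $o_P(1)$.

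For part~(II) I would apply Lemma~\ref{lem:block} exactly as in the OLS proof, now with $A_T$ the normalized fully modified output second-moment matrix (so that $A_T-A_0=\delta_{yy}$) and $B_T:=\tilde D_y\hYZ\tilde D_z$ (so that $B_T-B_0=\delta_{yz}$), the reference matrices chosen so that $B_0'A_0^{-1}B_0=\bar\Phi$, with the nonstationary/stationary partition and rates $a_T=b_T=T^{-1/2}$ as before. Since the $(2,2)$ block of $\delta_{yy}$ vanishes we again have $\delta_{22}^A=0$. The essential new point, and the reason the $(3,3)$ entry of $\bar\Phi$ is written with $\hYY$ rather than the raw moment $\la\tilde y_{t,2}^\pi,\tilde y_{t,2}^\pi\ra$, is that this modification is what cancels the only $o_P(1)$-order correction, namely $\delta_{22}^B$ (the $(2,3)$ block of $\delta_{yz}$), in the stationary--stationary block of $J=\bar Q-\bar\Phi$. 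Carrying out the block matrix inversion then reproduces the expressions~\eqref{equ:defJ+} and the claimed orders $J_{i,j}=O_P(T^{-1})$ for $i,j=1,2$, $J_{3,i}=O_P(T^{-1/2})$ and $J_{3,3}=O_P(T^{-1})$.

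Part~(III) is then immediate: combining the orders from (I) and (II) with Lemma~\ref{lem:linsvd} yields $\delta G=\bar G-\Gamma=O_P(T^{-1/2})$, as in Lemma~\ref{lem:SVD}(III); the sole structural difference from the OLS proof is that no almost sure rates are claimed, since the kernel estimators underlying $\hat B^+$ and $\hat C^+$ deliver only $O_P$ bounds. I expect the genuinely delicate step to be verifying that the redefinition of $\bar\Phi_{3,3}$ through $\hYY$ exactly absorbs the $o_P(1)$ correction carried by $\delta_{22}^B$, so that $J_{3,3}$ retains its $O_P(T^{-1})$ order even though, unlike in the OLS case, the corresponding block of $B_T$ no longer vanishes identically; the remaining computations are a routine transcription of the OLS argument with $O_P$ bounds replacing the almost sure ones.
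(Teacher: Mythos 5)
Your proposal follows the paper's own (very terse) proof essentially verbatim: part (I) from Lemma~\ref{lem:addterms} (together with Lemma~\ref{lem:omegadelta} and Assumption~K), part (II) by rerunning the OLS argument of Lemma~\ref{lem:SVD} through Lemma~\ref{lem:block}, and part (III) from Lemma~\ref{lem:linsvd}. One small correction of emphasis: the replacement of $\la \tilde y_{t,2}^\pi, \tilde y_{t,2}^\pi\ra$ by $\hYY$ in the $(3,3)$ entry of $\bar\Phi$ serves only to force $\delta_{yy}^{22}=0$ (i.e.\ $\delta_{22}^A=0$); it does not cancel the $(2,3)$ block of $\delta_{yz}$ (your $\delta_{22}^B$), which remains a separate small term that, as the paper puts it, simply ``needs to be taken into account'' when transcribing the OLS computation.
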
 
\begin{proof}
(I) follows from Lemma~\ref{lem:addterms}. Note that compared to the OLS case in the $(3,3)$ entry of $\bar{\Phi}$
the matrix $\la \tilde y_{t,2}^\pi, \tilde y_{t,2}^\pi \ra$ is replaced with $\hYY = \la \tilde y_{t,2}^\pi, \tilde 
y_{t,2}^\pi \ra + o_P(T^{-1/2})$ in order to obtain $\delta_{yy}^{22} =0$ rather than $o_P(T^{-1/2})$.   
Only the in probability statements are used. 

The proof of (II) then is unchanged except that $\delta_{yz}^{2,3} = o_P(T^{-1/2})$ needs to be taken into account. 
(III) is then immediate. 
\end{proof}

Next the proof of Lemma~\ref{lem:asydeltaG} uses only the results of Lemma~\ref{lem:SVD} and equation~\eqref{eq:imgg} in 
combination with the following limit results:
\begin{eqnarray*}
\sqrt{T} [\delta_{yz}^{21} - \delta_{yy}^{21}]'\la \tilde y_{t,2}^\pi, \tilde y_{t,2}^\pi \ra^{-1} & = &
\Xi + o(1), \\
\sqrt{T} [J_{3,1} - \delta_{zz}^{3,1} ] & = & \mE \tilde z_{t,3}^\Pi (\tilde y_{t,2}^\Pi)'\Xi' + o_P(1), \\
T\la \tilde z_{t,2.1}^\pi, \tilde z_{t,2.1}^\pi \ra^{-1} J_{2.1,1} & = & 
(T^{-1} \la \tilde z_{t,2.1}^\pi, \tilde z_{t,2.1}^\pi \ra)^{-1} \left( \la \tilde z_{t,2.1}^\pi, \tilde \ve_{t,1.2}\ra) + \tilde B_{1,2.1}' + \tilde B_{2,2.1}'\Xi' \right)' 
+ o_P(1), \\
\sqrt{T}\la \tilde z_{t,2.1}^\pi, \tilde z_{t,2.1}^\pi \ra^{-1} J_{2.1,3}
& = &
\la \tilde z_{t,2.1}^\pi, \tilde z_{t,2.1}^\pi \ra^{-1} \left( \la \tilde z_{t,2.1}^\pi, \tilde y_{t,2}^\pi +  \ra +  \tilde B_{2.1,2} \right)' \YY^{-1} \tO_2\Gamma_{3,2}'Z_{33} + o_P(1).
\end{eqnarray*}
where again $\Xi := -\mE \tilde \ve_{t,1} \tilde y_{t,2}' \YY^{-1}$ % (\mE \tilde y_{t,2}^\Pi (\tilde y_{t,2}^\Pi)')^{-1}$
and $\tilde \ve_{t,1.2} = \tilde \ve_{t,1} + \Xi \tilde y_{t,2}^\Pi$.
Further ($\tilde B^+$ denoting the transformed quantity $\hat B^+$)
$$
\tilde B_{1,2.1} = [I,0] \tilde B^+ [-Z_{21}Z_{11}^{-1},I,0]', \quad
\tilde B_{2,2.1} = [0,I] \tilde B^+ [-Z_{21}Z_{11}^{-1},I,0]'.
$$ 

Here the first statement follows from Lemma~\ref{lem:addterms}. The second from the fact that the (1,3) block of $\tilde \Sigma_{y,z}$ is of order $o_P(T^{-1/2})$ relating to 
a stationary component of the regressors. The remaining statements follow straightforwardly from the definition of $J$.

Lemma~\ref{lem:defhb} needs to be changed slightly by replacing a.s. statements by the corresponding in probability version.
\begin{lemma} \label{lem:defhbfm}
Let the assumptions of Theorem~\ref{thm:FM} hold.

(I)
Then
\begin{eqnarray*}
{\mathcal T}_y (\hat \beta_{RRR,r}^+ -b_r) {\mathcal T}_z^{-1}D_z^{-1}
 & = & \sqrt{T}\left[ \la \tilde \varepsilon_t, \tilde z_t^\pi \ra + \tilde B^+  \right] \tilde D_z \bar \Gamma \bar \Gamma^\dagger
+ \left[ \begin{array}{cc} TI & 0 \\ 0 & \sqrt{T} O \end{array} \right] \delta G'
( I- \bar M \bar G\bar G^\dagger)\\
& &
+ \left[ \begin{array}{ccc} 0 & 0 & 0 \\ 0 & 0 & \sqrt{T}[\tilde \beta_{2,3}-\tilde b_{2,3}]
 [I- \mE \tilde z_{t,3}^\Pi(\tilde z_{t,3}^\Pi)'
\Gamma_{3,2}\Gamma_{3,2}^\dagger] \end{array} \right]+o_P(1)
\end{eqnarray*}
where
$$
\bar \beta_{2,3} = \la \tilde y_{t,2}^\pi, \tilde z_{t,3}^\pi \ra
\bar \Gamma_{3,2} \bar \Gamma_{3,2}^\dagger = \bar O_2 \bar \Gamma_{3,2}' \to \tO_2\Gamma_{3,2}' = \tilde b_{2,3}
$$
denotes the solution to the subproblem of the problem (b) corresponding to the stationary components.
$\bar\Gamma_{3,2}^\dagger := (\bar\Gamma_{3,2}' \la \tilde z_{t,3}^\pi, \tilde z_{t,3}^\pi \ra \bar\Gamma_{3,2})^{-1}\bar\Gamma_{3,2}'$. 

(II)
Letting $\delta G_{:,1}$ and $\delta G_{:,2}$ denote the first and second block column of
$\delta G$ it holds that
\begin{eqnarray}
T\delta G_{:,1}'(I-\bar M\bar G \bar G^\dagger) & = & \left[ -T\delta H' Z_{21} Z_{11}^{-1} \quad T\delta H' \quad
T\delta G_{3,1}'\bar P \right] + o_P(1), \label{equ:tdeltag+}\\
\sqrt{T}\delta G_{:,2}'(I-\bar M \bar G\bar G^\dagger) & = & \sqrt{T}\delta G_{2,2}'
\left[ - Z_{21} Z_{11}^{-1} \quad I \quad 0 \right] + o_P(1) \label{equ:sqrttdeltag+}
\end{eqnarray}
where $\delta H = \delta G_{2,1} - \delta G_{2,2}(\Gamma_{3,2}^\dagger)'\mE \tilde z_{t,3}^\Pi (\tilde z_{t,3}^\Pi)'\delta G_{3,1}$
and $\bar P = I - Z_{33} \bar \Gamma_{3,2}\bar \Gamma_{3,2}^\dagger$.
\end{lemma}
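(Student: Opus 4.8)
The plan is to run the proof of Lemma~\ref{lem:defhb} essentially line by line, replacing every OLS quantity by its fully modified counterpart and weakening each almost sure statement to the corresponding in-probability one. Concretely, the role of $\la \tilde y_t^\pi, \tilde z_t^\pi \ra$ is taken over by the transformed $\hYZ = \la \tilde y_t^\pi, \tilde z_t^\pi \ra + \tilde B^+$, the weighting $\la \tilde y_t^\pi, \tilde y_t^\pi \ra$ by $W_+$, and in the $(3,3)$ block of $\bar \Phi$ the matrix $\la \tilde y_{t,2}^\pi, \tilde y_{t,2}^\pi \ra$ by $\hYY$. The only external inputs needed are the orders of convergence from Lemma~\ref{lem:SVDfm}, the asymptotics of the correction terms from Lemma~\ref{lem:addterms}, and the four displayed limit results preceding the lemma; with these in hand no new cancellation is required.

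For part (I) I would start from the reduced-rank representation
$$
{\mathcal T}_y \hat \beta_{RRR,r}^+ {\mathcal T}_{z,r}^{-1} = (\la \tilde y_t^\pi, \tilde z_t^\pi \ra + \tilde B^+)\, \hat G \hat G^\dagger,
$$
the FM analogue of the opening display of the proof of Lemma~\ref{lem:defhb}, and split $\la \tilde y_t^\pi, \tilde z_t^\pi \ra = \tilde b_r \la \tilde z_t^\pi, \tilde z_t^\pi \ra + \la \tilde \ve_t, \tilde z_t^\pi \ra$. By Lemma~\ref{lem:addterms} the nonstationary block of $\tilde B^+ \tilde D_z$ is $O_P(1)$ and its stationary block is $o_P(T^{-1/2})$, so $\tilde B^+$ feeds only the first term $\sqrt{T}[\la \tilde \ve_t, \tilde z_t^\pi \ra + \tilde B^+]\tilde D_z \bar \Gamma \bar \Gamma^\dagger$, where, exactly as in the OLS argument, $\bar G$ may be frozen at $\bar \Gamma$ at the stated cost. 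The block structure of $\tilde b_r$ is unchanged, so the manipulations using $\tilde b_{r,1,:} = [I,0]$ and $\tilde b_{r,2,:} = [0,\tilde b_{2,3}]$ carry over verbatim and produce the second (rank-restriction) term and the third $(2,3)$-correction term; here one only has to note that, since $\hYY \to \YY$, the stationary subproblem still yields $\bar \beta_{2,3} \to \tO_2 \Gamma_{3,2}'$.

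For part (II) the argument is structurally identical. I would feed the FM version of Lemma~\ref{lem:asydeltaG}, which was already established above from Lemma~\ref{lem:SVDfm} and \eqref{eq:imgg}, into the explicit form of $I - \bar \Psi \bar \Gamma \bar \Gamma^\dagger$, splitting $T\delta G_{:,1}'(I-\bar M \bar G \bar G^\dagger)$ into the projected $\delta H$ piece and the $\bar P$ piece, and using $\delta G_{3,2} = o_P(T^{-1/2})$ to reduce $\sqrt{T}\delta G_{:,2}'(I-\bar M \bar G \bar G^\dagger)$ to its $\delta G_{2,2}$ part. The single genuinely new check is that $\sqrt{T}\delta G_{3,1}'(I - Z_{33}\bar \Gamma_{3,2}\bar \Gamma_{3,2}^\dagger) \to 0$ still holds; this follows because the FM limit of $\sqrt{T}[J_{3,1} - \delta_{zz}^{31}]$ lies again in the range annihilated by the projector, exactly as in the OLS case.

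The hard part will be the bookkeeping rather than any new idea: I must verify that the places where the OLS proof used $\delta_{yz}^{2,3} = 0$ survive when this block is merely $o_P(T^{-1/2})$, and that inserting $\hYY$ into the $(3,3)$ block of $\bar \Phi$, which was arranged precisely to force $\delta_{yy}^{22} = 0$ rather than $o_P(T^{-1/2})$, leaves the leading-order expressions unchanged. Once the FM analogues of the four limit results above are confirmed, every remaining step reduces to the same algebra as in Lemma~\ref{lem:defhb}.
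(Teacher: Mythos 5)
Your proposal is correct and follows essentially the same route as the paper, which itself disposes of this lemma by exchanging the OLS estimation error for the FM one in part (I) and declaring the rest analogous to Lemma~\ref{lem:defhb}, relying on the orders of convergence from Lemma~\ref{lem:SVDfm}, Lemma~\ref{lem:addterms} and the preceding limit results. Your bookkeeping of where $\tilde B^+$, $\hYY$ and the weakened block $\delta_{yz}^{2,3}=o_P(T^{-1/2})$ enter, and the check that $\sqrt{T}\delta G_{3,1}'(I-Z_{33}\bar\Gamma_{3,2}\bar\Gamma_{3,2}^\dagger)\to 0$ survives, is in fact more explicit than what the paper records.
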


The only change in the proof consists in exchanging the estimation error for the OLS estimator by the estimation error for the 
FM estimator in (I). The rest of the proof is analogously to the OLS case and hence omitted. Primarily the orders of 
convergence derived above are used.

It remains to analyze the asymptotic distribution of the various terms. This is done in the analogon to Lemma~\ref{lem:asydist}:

\begin{lemma}\label{lem:asydistfm}
With $\tO_2^\dagger =
(\tO_2' (\mE \tilde y_{t,2}^\Pi (\tilde y_{t,2}^\Pi)')^{-1}\tO_2)^{-1} \tO_2'(\mE \tilde y_{t,2}^\Pi(\tilde y_{t,2}^\Pi)')^{-1}$ we have
\begin{eqnarray*}
(\la \tilde \ve_{t}, \tilde z_{t,1}^\pi \ra + \tilde B_{:,1}^+) Z_{11}^{-1} & \stackrel{d}{\to} & f({\mathcal T}_y\Lambda B, W_{z,1}^\Pi,0), \\
\sqrt{T}\la \tilde \ve_t , \tilde z_{t,3}^\pi \ra & \stackrel{d}{\to} & {\cal N}(0,V), \\
\sqrt{T}\delta G_{2,2} & = & Z_{22.1}^{-1}( \la \tilde \ve_{t,2}^\pi, \tilde z_{t,2.1}^\pi \ra + \tilde B_{2,2.1} )' (\tO_2^\dagger)' + o_P(1) \stackrel{d}{\to} M_{2,+}'(\tO_2^\dagger)',\\
T\bar P'\delta G_{3,1} & = & Z_{33}^{-1}P' \sqrt{T}\la \tilde z_{t,3}^\pi, \tilde \ve_{t,1.2}\ra  %\bar \Gamma_{11}
+ Z_{33}^{-1}\sqrt{T}(\bar P-P)' \mE \tilde z_{t,3}^\Pi (\tilde y_{t,2}^\Pi)'\Xi' %\bar{\Gamma}_{11}
+ o_P(1) \to \tilde R',\\
T\delta H  & = & Z_{22.1}^{-1} %(T^{-1}\la \tilde z_{t,2.1}^\pi, \tilde z_{t,2.1}^\pi \ra)^{-1}
\left[ \la \tilde z_{t,2.1}^\pi , \tilde \ve_{t,1} \ra + \tilde B_{1,2.1}' +
\left(  \la \tilde z_{t,2.1}^\pi , \tilde \ve_{t,2}^\pi \ra + \tilde B_{2,2.1}' \right)  (I-\tO_2 \tO_2^\dagger )'\Xi' \right] + o_P(1) \\
& \stackrel{d}{\to} & N_+',\\
\sqrt{T}[ \bar \beta_{2,3} - \tilde \beta_{2,3} ]P & = &
\tO_2 \tO_2^\dagger  \sqrt{T}\la \tilde \ve_{t,2}, \tilde z_{t,3}^\Pi \ra
(\mE \tilde z_{t,3}^\Pi (\tilde z_{t,3}^\Pi)')^{-1}P+ o_P(1),
\end{eqnarray*}
where $P = (I - \mE \tilde z_{t,3}^\Pi (\tilde z_{t,3}^\Pi)' \Gamma_{3,2}\Gamma_{3,2}^\dagger)$.
Here $M_{2,+} = f([0,I]{\mathcal T}_yB,W_{z,2}^\Pi - Y_{21}Y_{11}^{-1}W_{z,1}^\Pi,0)$
and $N_+ = f([[I,0]+\Xi(I-O_2O_2^\dagger)[0,I]]{\mathcal T}_y B,W_{z,2}^\Pi - Y_{21}Y_{11}^{-1}W_{z,1}^\Pi,0)$.
$T\bar P\delta G_{3,1}$ and $\sqrt{T}[ \tilde \beta_{2,3} - \beta_2 ]P$ converge in distribution
to Gaussian random variables with mean zero.
\end{lemma}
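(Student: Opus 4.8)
The plan is to follow the proof of Lemma~\ref{lem:asydist} essentially line by line, replacing each OLS ingredient by its fully modified counterpart. The block orders in Lemma~\ref{lem:SVDfm} coincide with those of Lemma~\ref{lem:SVD}, and the derivation of the $\delta G$ blocks in Lemma~\ref{lem:asydeltaG} uses only those orders together with~\eqref{eq:imgg}; hence the structural formulas for $\delta G_{2,2}$, $\delta G_{3,1}$ and $\delta H$ carry over verbatim, now fed by the four displayed limit identities stated immediately before Lemma~\ref{lem:defhbfm}. The only genuine differences to the unmodified case are that every occurrence of $\la\tilde\ve_t,\tilde z_{t,\bullet}^\pi\ra$ in a nonstationary direction acquires the additive correction $\tilde B^+$ (via $\hYZ$) and that $\la\tilde y_{t,2}^\pi,\tilde y_{t,2}^\pi\ra$ is replaced by $\hYY$, which by construction still converges to $\YY=\mE\tilde y_{t,2}^\Pi(\tilde y_{t,2}^\Pi)'$. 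Consequently $\Xi$, $\tO_2^\dagger$, $Z_{33}$, $P$ and $S$ keep exactly their OLS meaning.

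First I would dispose of the two purely distributional claims. The limit $(\la\tilde\ve_t,\tilde z_{t,1}^\pi\ra+\tilde B_{:,1}^+)Z_{11}^{-1}\stackrel{d}{\to}f({\mathcal T}_y\Lambda B,W_{z,1}^\Pi,0)$ is standard fully modified asymptotics: by Lemma~\ref{lem:omegadelta} and Lemma~\ref{lem:addterms} the correction $\tilde B_{:,1}^+$ is precisely the term converting the martingale part ${\mathcal T}_y\Lambda W$ into the bias-corrected ${\mathcal T}_y\Lambda B$, and the joint weak limit of numerator and $Z_{11}$ is read off by continuous mapping from the functionals of Lemma~\ref{lem:bound}. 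The claim $\sqrt T\la\tilde\ve_t,\tilde z_{t,3}^\pi\ra\stackrel{d}{\to}{\cal N}(0,V)$ is identical to the OLS case, since $\tilde z_{t,3}^\pi$ is stationary and the FM correction in that block is $o_P(T^{-1/2})$ by Lemma~\ref{lem:SVDfm}(I), so the same central limit theorem applies.

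Next I would treat the three $\delta G$ expressions. For $\sqrt T\delta G_{2,2}$ I would insert the fourth displayed identity preceding Lemma~\ref{lem:defhbfm}, expressing $\sqrt T\la\tilde z_{t,2.1}^\pi,\tilde z_{t,2.1}^\pi\ra^{-1}J_{2.1,3}$ through $(\la\tilde z_{t,2.1}^\pi,\tilde y_{t,2}^\pi\ra+\tilde B_{2.1,2})'\YY^{-1}\tO_2\Gamma_{3,2}'Z_{33}$; the algebra mirrors the OLS proof with $\tilde B_{2,2.1}$ riding along additively and yields $M_{2,+}'(\tO_2^\dagger)'$. For $T\bar P'\delta G_{3,1}$ I would use $\bar P_{33}\delta G_{3,1}=\bar P[J_{3,1}-\delta_{zz}^{31}]+o(T^{-1})$ with the second displayed identity $\sqrt T[J_{3,1}-\delta_{zz}^{31}]\to\mE\tilde z_{t,3}^\Pi(\tilde y_{t,2}^\Pi)'\Xi'$ and $\sqrt T\la\tilde z_{t,3}^\pi,\tilde\ve_{t,1.2}\ra$; the projection $(I-\mE\tilde z_{t,3}^\Pi(\tilde z_{t,3}^\Pi)'\Gamma_{3,2}\Gamma_{3,2}^\dagger)\mE\tilde z_{t,3}^\Pi(\tilde z_{t,3}^\Pi)'\Gamma_{3,2}=0$ again annihilates the bias part, giving $\tilde R'$. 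The expression for $T\delta H$ is the longest: I would reproduce the chain of cancellations from the OLS $\delta H$ computation, now carrying the two extra terms $\tilde B_{1,2.1}'$ and $\tilde B_{2,2.1}'(I-\tO_2\tO_2^\dagger)'\Xi'$, and collapse the $\delta G_{3,1}$ contribution using $SZ_{33}\Gamma_{3,2}=\Gamma_{3,2}(I-\Theta_2^2)^{-1}$ from~\eqref{equ:convS}.

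The last claim, $\sqrt T[\bar\beta_{2,3}-\tilde\beta_{2,3}]P$, is identical to its OLS counterpart: it involves only the stationary subproblem, so Lemma~\ref{lem:dKp} applies unchanged through~\eqref{eq:dKp}, the FM correction contributing nothing at order $T^{-1/2}$ there. The main obstacle throughout is the bookkeeping of the off-diagonal correction $\tilde D_y\tilde B^+\tilde D_z$: using the bandwidth restrictions $\sqrt{K/T}=o_P(1)$, $1/\sqrt{KT}=o_P(T^{-1/2})$, $K^{-2}=o_P(T^{-1/2})$ of Assumption~K one must check that in the stationary blocks this correction is genuinely $o_P(T^{-1/2})$ while in the nonstationary blocks it is exactly the $O_P(1)$ (resp. $O_P(T^{-1/2})$) term converting $W$ to $B$, and that these orders survive pre- and post-multiplication by $\bar\Gamma$, $\bar\Gamma^\dagger$ and the projectors $\bar P$ and $(I-\tO_2\tO_2^\dagger)$. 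Once Lemma~\ref{lem:SVDfm} supplies these orders, the remaining manipulations are the same tedious but routine algebra as in the OLS proof, and the mean-zero Gaussianity assertions follow from normality of the innovation functionals exactly as there.
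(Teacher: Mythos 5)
Your proposal is correct and follows essentially the same route as the paper, which itself disposes of this lemma in one sentence ("the proof follows analogously to the OLS case") after establishing Lemma~\ref{lem:addterms}, Lemma~\ref{lem:SVDfm} and the four displayed limit identities preceding Lemma~\ref{lem:defhbfm}. Your write-up is in fact a more explicit account of exactly the substitutions the paper leaves implicit: $\tilde B^+$ converting $W$ to $B$ in the nonstationary directions, $\hYY$ replacing $\la\tilde y_{t,2}^\pi,\tilde y_{t,2}^\pi\ra$, and the stationary-block claims carrying over unchanged.
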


The proof follows analogously to the OLS case. The remaining steps of the proof are analogous to the proof for Theorem~\ref
{thm:RRR} and hence omitted.

\newpage 

\section{Collection of notation} \label{sec:notation}
In this section the notation is presented in order to make reference easier. The general concept is to use lower case letters 
for processes (where $y$ is reserved for the dependent variable, $z$ denotes regressors and $v,w,u$ is reserved for stationary processes, 
$\ve, \eta$ denote white noise). Processes built using a number of coordinates of other processes are indicated using sub- or 
superscripts. Regression residuals are indicated using a superscript $\pi$ (where the regressors are clear from the context) 
and their corresponding limits with a superscript $\Pi$ (this notation is only used, if limits exist).

Upper case letters are used for matrices. Matrices that transform the basis of processes are indicated using ${\mathcal T}$ 
where the transformed process is indicated as a subscript. Scaling matrices that are introduced in order to ensure the 
convergence of matrices are denoted using $D$ with subscripts denoting the processes to which they are applied. 

Estimates in the original basis are denoted using a $\hat \bullet$, in the transformed basis (see Theorem~\ref{thm:repr}) with a
$\tilde \bullet$ and in the transformed basis with appropriate scaling ensuring convergence with a $\bar \bullet$ or $\check \bullet$ respectively. 

\subsection{Processes}
Below $a_t, b_t$ are used to denote arbitrary processes, where the notation applies to a number of different processes. 
\begin{eqnarray*}
y_t & = & b_r z_t^r + b_u z_t^u + L\ve_t, \\
z_t & = & \left[ \begin{array}{c} z_t^r \\ z_t^u \end{array}\right], 
\mbox{diag}(\Delta,I)H_r' z_t^r = v_t, 
\mbox{diag}(\Delta,I)H_u' z_t^u = w_t, \\
\nu_t & = & \left[ \begin{array}{c} v_t \\ w_t \end{array}\right] = c(z)\ve_t, c(0)=0, \det c(1) \ne 0, \\
%\mbox{rank}(c_v(1)) = \mbox{rank}\left[c_v(1)(I-c_w(1)'(c_w(1)c_w(1)')^{\dagger}c_w(1)')\right], \\
\la a_t, b_t \ra & = & T^{-1}\sum_{t=1}^T a_t b_t', \\
a_t^\pi & = & a_t - \la a_t, z_t^u \ra \la z_t^u, z_t^u \ra^{-1} z_t^u \left( \to a_t^\Pi \mbox{(if convergent)}\right), \\
\tilde y_t & = & {\mathcal T}_y (y_t - b_u z_t^u) = \tilde b_r \tilde z_t +\tilde \ve_t = \left[\begin{array}{ccc} I & 0 & 0 \\ 0 & 0 & \tilde b_{2,3} \end{array} \right] \left[ \begin{array}{c} \tilde z_{t,1} \\ \tilde z_{t,2} \\ \tilde z_{t,3} \end{array}\right] + \left[ \begin{array}{c} \tilde \ve_{t,1} \\ \tilde \ve_{t,2}  \end{array}\right], \\
\tilde z_t & = & {\mathcal T}_{z,r} z_t^r=  \left[ \begin{array}{c} \tilde z_{t,1} \\ \tilde z_{t,2} \\ \tilde z_{t,3} \end{array}\right],  \\
\Delta \tilde z_{t,1} & = & \tilde c_{z,1}(z)\ve_t, 
\Delta \tilde z_{t,2} = \tilde c_{z,2}(z)\ve_t, 
%\left[\begin{array}{c}  \tilde c_{z,1}(1) \\ \tilde c_{z,2}(1) \end{array} \right] \mbox{of full row rank}, 
\tilde z_{t,3} = \tilde c_{z,3}(z)\ve_t \quad \mbox{stationary} \\
\tilde z_t^u & = & {\mathcal T}_{z,u}z_t^u =  \left[ \begin{array}{c} \tilde z_{t,1}^u \\ \tilde z_{t,2}^u  \end{array}\right], \\
\Delta \tilde z_{t,1}^u & = & \tilde c_{u,1}(z)\ve_t, %c_{u,1}(1) \mbox{of full row rank},
\tilde z_{t,2}^u = \tilde c_{u,2}(z)\ve_t \quad \mbox{stationary}, \\
\left[\begin{array}{c}  \tilde c_{z,1}(1) \\ \tilde c_{z,2}(1) \\ \tilde c_{u,1}(1) \end{array} \right] & & \mbox{is of full row rank}. 
\end{eqnarray*}

\subsection{Matrices} 

\begin{itemize}
	\item[$b_r$] $\in \mR^{s \times m_r}$ coefficient matrix corresponding to $z_t^r$
	\item[$b_u$] $\in \mR^{s \times m_u}$ coefficient matrix corresponding to $z_t^u$
	\item[$b$] $=[b_r,b_u]$ coefficient matrix corresponding to $z_t$.
	\item[${\mathcal T}_y$] $\in \mR^{s \times s}$ used to transform $y_t$ into $\tilde y_t$ separating stationary from nonstationary terms. 
	\item[${\mathcal T}_{z,r}$] $\in \mR^{m_r \times m_r}$ used to transform $z_t^r$ into $\tilde z_t$ separating stationary from nonstationary terms. 
	\item[${\mathcal T}_{z,u}$] $\in \mR^{m_u \times m_u}$ used to transform $z_t^u$ into $\tilde z_t^u$ separating stationary from nonstationary terms. 
	\item[${\mathcal T}_z$] $= \mbox{diag}({\mathcal T}_{z,r},{\mathcal T}_{z,u})$.
	\item[$\tilde b_r$] $= {\mathcal T}_y b_r {\mathcal T}_{z,r}^{-1}$. 
	\item[$\tilde b_u$] $= {\mathcal T}_y b_u {\mathcal T}_{z,u}^{-1}$.
	\item[$\hat \beta_{OLS}$] OLS estimator of $\beta$.
	\item[$\hat \beta_{OLS,r}$] OLS estimator of $b_r$
	\item[$\hat \beta_{OLS,u}$] OLS estimator of $b_u$
	\item[$\tilde \beta_{OLS}$] OLS estimator of $\tilde b$.
	\item[$\tilde \beta_{OLS,r}$] OLS estimator of $\tilde b_r$
	\item[$\tilde \beta_{OLS,u}$] OLS estimator of $\tilde b_u$
	\item[$\hat \beta_{RRR}$] RRR estimator of $b$.
	\item[$\hat \beta_{RRR,r}$] RRR estimator of $b_r$
	\item[$\hat \beta_{RRR,u}$] RRR estimator of $b_u$
	\item[$\tilde \beta_{RRR}$] RRR estimator of $\tilde b$.
	\item[$\tilde \beta_{RRR,r}$] RRR estimator of $\tilde b_r$
	\item[$\tilde \beta_{RRR,u}$] RRR estimator of $\tilde b_u$	
	\item[$\hat \Xi_+$] $\in \mR^{s \times s}$ weighting matrix. %Limit: $\Xi_+$. 
	\item[$\Xi$] $= - \mE \tilde \ve_{t,1}\tilde y_{t,2}'(\mE \tilde y_{t,2}^{\Pi}\tilde y_{t,2}')^{-1}$
	\item[$M_r$] $= f(W,W_z^{\Pi})$ where $W$ denotes the Brownian motion corresponding to $(\ve_t)_{t \in \mN}$, $W_z = \tilde c_{1:2}(1)W, W_u = \tilde c_{u,1}(1)W, W_z^\Pi = W_z - \int W_zW_u'(\int W_uW_u')^{-1}W_u$. Further $f(W_1,W_2) = \int dW_1W_2(\int W_2W_2')^{-1}$. % (see Phillips ????).
	\item[$M_u$] $=f(W,W_u)$. 
	\item[$N_r$] $=\int W_zW_u' (\int W_uW_u')^{-1}$. 
	\item[$M_{r,2}$] $=f([0,I]{\mathcal T}_yW,W_{z,2}^\Pi-Y_{21}Y_{11}^{-1}W_{z,1}^\Pi)$.
	\item[$Y_{i1}$] $=\int W_{z,i}^\Pi (W_{z,1}^\Pi)'$.
	\item[$P$] $= I - \mE \tilde z_{t,3}^\Pi (\tilde z_{t,3}^\Pi)'\Gamma_{3,2}\Gamma_{3,2}^\dagger, \Gamma_{3,2}^\dagger = (\Gamma_{3,2}' \mE \tilde z_{t,3}^\Pi (\tilde z_{t,3}^\Pi)' \Gamma_{3,2})^{-1}\Gamma_{3,2}$.
	\item[$\bar P_{3,3}$] $=  \la \tilde z_{t,3}^\pi , \tilde z_{t,3}^\pi \ra - \la \tilde z_{t,3}^\pi , \tilde z_{t,3}^\pi \ra
\bar \Gamma_{3,2} \bar \Gamma_{3,2}^\dagger \la \tilde z_{t,3}^\pi , \tilde z_{t,3}^\pi \ra$.
	\item[$D_{y}$] $= \mbox{diag}(T^{-1}I_{c_y},T^{-1/2})$ proper scaling for $\tilde y_t^\pi$.
	\item[$D_{z,r}$] $= \mbox{diag}(T^{-1}I_{c_r},T^{-1/2})$ proper scaling for $\tilde z_t^\pi$.
	\item[$D_{z,u}$] $= \mbox{diag}(T^{-1}I_{c_u},T^{-1/2})$ proper scaling for $\tilde z_t^u$.
	\item[$D_z$] $=\mbox{diag}(D_{z,r},D_{z,u})$
	\item[$\tilde D_z$] $= D_z T^{1/2} = \mbox{diag}(T^{-1/2}I,I), \quad \tilde D_y =  D_y T^{1/2} = \mbox{diag}(T^{-1/2}I,I)$
\item[$\bar{G}$] $= \tilde D_z^{-1} \hat G$
\item[$\bar Q $] $= \la \tilde D_z \tilde z_t^\pi , \tilde D_y \tilde y_t^\pi \ra
\la \tilde D_y \tilde y_t^\pi , \tilde D_y \tilde y_t^\pi \ra^{-1}
\la \tilde D_y \tilde y_t^\pi , \tilde D_z \tilde z_t^\pi \ra$
\item[$\bar M$] $= \la \tilde D_z \tilde z_t^\pi , \tilde D_z \tilde z_t^\pi \ra$
\item[$\bar \Phi$] $= \left[ \begin{array}{ccc} T^{-1}\la \tilde z_{t,1}^\pi, \tilde z_{t,1}^\pi\ra & T^{-1}\la \tilde z_{t,1}^\pi, \tilde z_{t,2}^\pi\ra & 0 \\
T^{-1}\la \tilde z_{t,2}^\pi, \tilde z_{t,1}^\pi\ra & T^{-1}\la \tilde z_{t,2}^\pi, \tilde z_{t,1}^\pi\ra \la \tilde z_{t,1}^\pi, \tilde z_{t,1}^\pi\ra^{-1}\la \tilde z_{t,1}^\pi, \tilde z_{t,2}^\pi\ra & 0 \\
0 & 0 & \la \tilde z_{t,3}^\pi, \tilde y_{t,2}^\pi\ra \la \tilde y_{t,2}^\pi, \tilde y_{t,2}^\pi\ra^{-1}\la \tilde y_{t,2}^\pi, \tilde z_{t,3}^\pi\ra
\end{array} \right]$
\item[$\bar \Psi$] $=\left[ \begin{array}{ccc} T^{-1}\la \tilde z_{t,1}^\pi, \tilde z_{t,1}^\pi\ra & T^{-1}\la \tilde z_{t,1}^\pi, \tilde z_{t,2}^\pi\ra & 0 \\
T^{-1}\la \tilde z_{t,2}^\pi, \tilde z_{t,1}^\pi\ra & T^{-1}\la \tilde z_{t,2}^\pi, \tilde z_{t,2}^\pi\ra & 0 \\
0 & 0 & \la \tilde z_{t,3}^\pi, \tilde z_{t,3}^\pi\ra
\end{array} \right]$
\item[$J$] $=\bar Q - \bar \Phi$.
\item[$Z_{ij}$] $= T^{-1}\la \tilde z_{t,i}^\pi,  \tilde z_{t,j}^\pi \ra, j=1,2$. 
\item[$\delta_{zz}$] $= \bar{M}-\bar{\Psi}$ 
\item[$\delta_{yz}$] $= \left[\begin{array}{ccc} T^{-1}(\la \tilde y_{t,1}^\pi, \tilde z_{t,1}^\pi \ra - \la \tilde z_{t,1}^\pi, \tilde z_{t,1}^\pi \ra)  &
T^{-1}(\la \tilde y_{t,1}^\pi, \tilde z_{t,2}^\pi \ra - \la \tilde z_{t,1}^\pi, \tilde z_{t,2}^\pi \ra ) &
T^{-1/2}\la \tilde y_{t,1}^\pi, \tilde z_{t,3}^\pi \ra \\
T^{-1/2}\la \tilde y_{t,2}^\pi, \tilde z_{t,1}^\pi \ra  &
T^{-1/2}\la \tilde y_{t,2}^\pi, \tilde z_{t,2}^\pi \ra &
0  \end{array} \right] $
\item[$\delta_{yy}$] $= \left[\begin{array}{cc} T^{-1}(\la \tilde y_{t,1}^\pi, \tilde y_{t,1}^\pi \ra - \la \tilde z_{t,1}^\pi, \tilde z_{t,1}^\pi \ra)  &
T^{-1/2}\la \tilde y_{t,1}^\pi, \tilde y_{t,2}^\pi \ra \\
T^{-1/2}\la \tilde y_{t,2}^\pi, \tilde y_{t,1}^\pi \ra  &
0  \end{array} \right]$
\item[$\bar S$] $= (\la \tilde z_{t,3}^\pi, \tilde z_{t,3}^\pi \ra - \la \tilde z_{t,3}^\pi, \tilde y_{t,2}^\pi \ra
\la \tilde y_{t,2}^\pi, \tilde y_{t,2}^\pi \ra^{-1}\la \tilde y_{t,2}^\pi, \tilde z_{t,3}^\pi \ra)^{-1}$.
\item[$\bar \Gamma^\dagger$] $= (\bar \Gamma' \bar \Psi \bar \Gamma)^{-1} \bar \Gamma$
\item[$\bar G^\dagger$] $= (\bar G'  \bar M \bar G)^{-1} \bar G$
\item[$\delta G$] $ = \bar G - \bar \Gamma$
\item[$\tO_2^\dagger$]  $=  (\tO_2'(\mE \tilde y_{t,2}^\Pi\tilde y_{t,2}')^{-1}O_2)^{-1}\tO_2'(\mE \tilde y_{t,2}^\Pi\tilde y_{t,2}')^{-1}$.

\end{itemize}

\subsection{Singular value decompositions}

\begin{table}[t]
\begin{tabular}{|cc|c|}
\hline
 Original 
& Formula & 
$\la z_{t}^{r,\pi} , y_{t}^{\pi} \ra \la y_{t}^{\pi} , y_{t}^{\pi} \ra^{-1}  \la y_{t}^{\pi} , z_{t}^{r,\pi} \ra \hat G = \la z_{t}^{r,\pi} , z_{t}^{r,\pi}\ra \hat G \hat R^2$  \\
\hline
 Transformed 
& Formula & $\la \tilde z_{t}^{\pi} , \tilde y_{t}^{\pi} \ra \la \tilde y_{t}^{\pi} , \tilde y_{t}^{\pi} \ra^{-1}\la \tilde y_{t}^{\pi} , \tilde z_{t}^{\pi} \ra \tilde  G = \la \tilde z_{t}^{\pi} , \tilde z_{t}^{\pi}\ra \tilde  G \tilde R^2$ \\
& Relations & $\tilde G = {\mathcal T}_{z,r}^{-1}\hat G$ \\
\hline
 Scaled 
& Formula & $\bar Q \bar G = \bar M \bar G \bar R^2$ \\ 
& Restrictions & $[I,0] \bar G_{:,1} = I, S_{p,2}' \bar G_{:,2}= I, \bar R^2 = \mbox{diag}(\bar R_1^2,\bar R_2^2)$  \\
& Relations & $\bar G = \tilde D_z^{-1} \tilde G$ \\
\hline
 Decoupled 
& Formula & $\bar \Phi \bar \Gamma = \bar \Psi \bar \Gamma \bar \Theta^2$\\ 
& Restrictions & $\bar \Gamma' S_p = I, \bar \Theta^2 = \mbox{diag}(I,\bar \Theta_2^2)$, \\%\bar \Theta_2 = \mbox{diag}(\bar \theta_{c_y+1},\cdots,\bar \theta_{n})$ \\
& Relations & $\bar \Gamma = \left[ \begin{array}{cc} %(T^{-1}\la \tilde z_{t,1}^{\pi},\tilde z_{t,1}^{\pi} \ra)^{-1/2}
I & 0 \\ 0 & 0 \\ 0 & \bar \Gamma_{3,2} \end{array}\right]$ \\
\hline
 Stationary subproblem 
& Formula & 
$\la \tilde z_{t,3}^{\pi} , \tilde y_{t,2}^{\pi} \ra \la \tilde y_{t,2}^{\pi} , \tilde y_{t,2}^{\pi} \ra^{-1}\la \tilde y_{t,2}^{\pi} , \tilde z_{t,3}^{\pi} \ra \bar \Gamma_{3,2} = \la \tilde z_{t,3}^{\pi} , \tilde z_{t,3}^{\pi}\ra \bar \Gamma_{3,2} \bar \Theta_2^2$ \\
& Restrictions & 
$\bar \Gamma_{3,2}'S_{p,22} = I$\\ 
& Relations &  converges to $\tilde b_{2,3} = \tO_2\Gamma_{3,2}'$. 
\\
\hline

\end{tabular}
\caption{Singular value decompositions used in the article.}
\end{table}

\end{document}